\documentclass[11pt, a4paper, oneside, reqno]{amsart}
\usepackage{amsmath}

\usepackage{caption}
\usepackage{subcaption}

\usepackage[numbered]{bookmark}
\usepackage{changepage} 
\usepackage[utf8]{inputenc}
\usepackage{graphics, graphicx}
\usepackage{amsmath, amsfonts, amsthm, amssymb}
\usepackage{mathrsfs, mathtools, mathabx}
\usepackage{verbatim} 
\usepackage{enumerate}
\usepackage{xcolor}
\usepackage{cite}
\usepackage{array}
\usepackage[T1]{fontenc}

\newtheorem{theorem}{Theorem}[section]
\newtheorem{definition}[theorem]{Definition}
\newtheorem{lemma}[theorem]{Lemma}
\newtheorem{proposition}[theorem]{Proposition}
\newtheorem{corollary}[theorem]{Corollary}

\newcounter{claimcounter}
\newtheorem{claima}{Claim}[claimcounter]

\newtheorem{claimb}{Claim}[claimcounter]

\DeclareMathOperator{\id}{id}

\newcommand{\A}{\mathcal{A}}

\newcommand{\Z}{\mathbb{Z}} 
\newcommand{\N}{\mathbb{N}}
\newcommand{\Q}{\mathbb{Q}}
\newcommand{\R}{\mathbb{R}}

\newcommand{\h}{\mathfrak{h}}

\newcommand{\Simplex}{\Lambda^\A}
\newcommand{\PermSpace}{S^\A}

\newcommand{\LebR}{\textup{Leb}_\R}

\newcommand{\Function}[5]{\begin{array}{llll} #1 : & #2 & \rightarrow & #3 \\ & #4 & \mapsto & #5 \end{array}}
\linespread{1.2}
\hoffset=-15mm \textwidth 160mm \topmargin -0mm \textheight 231mm
\date{}
\title	[Antisymmetric extensions of IETs]{On the ergodicity of infinite antisymmetric extensions of symmetric IETs}
\author[P.~Berk]{Przemys\l aw Berk}
\address{Faculty of Mathematics and Computer Science, Nicolaus Copernicus University, ul. Chopina 12/18, 87-100 Toru\'n, Poland}
\email{zimowy@mat.umk.pl}
\author[F.~Trujillo]{Frank Trujillo}
\address{Institut für Mathematik, Universität Zürich, Winterthurerstrasse 190, CH-8057 Zürich, Switzerland}
\email{frank.trujillo@math.uzh.ch }

\begin{document}

\maketitle

\begin{abstract}
We consider skew product extensions over symmetric interval exchange 
transformations {on the unit interval $[0, 1)$} with respect to the cocycle $f 
=\chi_{(0,1/2)}-\chi_{(1/2,1)}$. We prove that for almost every interval 
exchange transformation $T$ with symmetric combinatorial data, the skew product 
$T_f: [0, 1) \times \Z \to [0, 1) \times \Z$ given by $T_f(x,r)=(T(x),r+f(x))$ 
is ergodic with respect to the product of the Lebesgue and counting measures.	
\end{abstract}

\section{Introduction} 

{In this work, we investigate the ergodic properties of a class of} infinite 
extensions of interval exchange transformations. More precisely, given an 
ergodic {(with respect to the Lebesgue measure)} interval exchange 
transformation $T: I \to I$ on the unit interval $I = [0, 1)$ (see Section 
\ref{sc:IETs} for precise definitions) and $f = \chi_{(0,1/2)}-\chi_{(1/2,1)}$, 
we study the ergodicity of the map
\begin{equation}
\label{eq:IET_skewproduct}
\Function{T_f}{[0, 1) \times \Z}{[0, 1) \times \Z}{(x,r)}{(T(x),r+f(x))}
\end{equation}
{with respect to the }product of the Lebesgue and counting measures. 

{
As we shall recall below, general skew products of this kind preserve a natural 
(infinite) measure and exhibit important recurrence properties under the sole 
assumptions of ergodicity of the base transformation $T$ and zero mean of the 
skewing function $f$. However, few results concerning the ergodicity of these 
transformations are known.} The main obstacle {is that this question} falls 
into the class of infinite measure-preserving systems, where the ergodic 
properties are much harder to identify.

\subsection{Recurrence for skew-products} 
{Given a probability measure preserving transformation $(T, X, \mathcal{B}, 
\mu)$ and a measurable \emph{skewing function} $\varphi: X \to G$ (sometimes 
called a \emph{cocycle}) where $(G, +)$ is a locally compact abelian group with 
Haar measure $\nu_G$, it is immediate to check that the associated skew product 
\begin{equation}
\label{eq:general_skewproduct}
\Function{T_\varphi}{X \times G}{X \times G}{(x, g)}{(T(x), g+\varphi(x))}
\end{equation}
preserves the (possibly infinite) product measure $\mu \times \nu_G$. Moreover, 
when 
restricted to $\R$-valued skewing functions, it follows from classical results 
of Atkinson \cite{atkinson_recurrence_1976} and Schmidt 
\cite{schmidt_cocycles_1977} that the associated skew product displays 
important recurrence properties with respect to this invariant measure whenever 
the skewing function has zero mean (see Theorem 
\ref{thm:recurrence_skewproducts} below). {In case of $G=\R$, the measure 
$\nu_{\R}$ is just Lebesgue measure on $\R$, which we denote $\LebR$. On the 
other hand, if $G=\Z$, the measure $\nu_\Z$ is just a counting measure on $\Z$.}

Recall that if $\varphi$ is real-valued, 
$T_\varphi$ is said to be \emph{recurrent} if for every $A \subseteq X$ of 
positive $\mu$-measure and for any $\epsilon > 0$ there exists $n \in \Z$ such 
that $T_\varphi^n(A \times \{0\}) \cap (A \times (-\epsilon, \epsilon)) \neq 
\emptyset$, and $T_\varphi$ is said to be \emph{conservative} if for any $D 
\subseteq X \times \R$ of positive $(\mu \times \LebR)$-measure there exists $n 
\in \Z$ such that $T_\varphi(D) \cap D \neq \emptyset$. 

\begin{theorem}[Atkinson \cite{atkinson_recurrence_1976}, Schmidt \cite{schmidt_cocycles_1977}]
\label{thm:recurrence_skewproducts}
Let $(T, X, \mathcal{B}, \mu)$ be an ergodic measure preserving transformation and $\varphi: X \to \R$ be a measurable function in $L^1(X, \mu)$. Then, the following holds. 
\begin{enumerate}
\item $T_\varphi$ preserves the measure $\mu \times \textup{Leb}_\R$.
\item $T_\varphi$ is recurrent if and only if $\varphi$ has zero mean.
\item $T_\varphi$ is conservative if and only if $\varphi$ has zero mean.
\end{enumerate}
\end{theorem}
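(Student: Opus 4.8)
The whole argument revolves around the Birkhoff sums $\varphi_n:=\sum_{k=0}^{n-1}\varphi\circ T^k$ and the identity $T_\varphi^n(x,t)=(T^nx,\,t+\varphi_n(x))$. For part (1) I would check invariance of $\mu\times\LebR$ on rectangles: for Borel $A\subseteq X$, $B\subseteq\R$, Tonelli's theorem together with the translation invariance of $\LebR$ and the $T$-invariance of $\mu$ give
\begin{equation*}
(\mu\times\LebR)\big(T_\varphi^{-1}(A\times B)\big)=\int_{T^{-1}A}\LebR\big(B-\varphi(x)\big)\,d\mu(x)=\mu(A)\,\LebR(B),
\end{equation*}
and a monotone class argument extends this to all measurable sets.

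For the ``only if'' parts of (2) and (3), suppose $\int_X\varphi\,d\mu\neq0$. By Birkhoff's ergodic theorem $\varphi_n(x)/n$ converges $\mu$-a.e.\ to this nonzero mean, so $|\varphi_n(x)|\to\infty$ for a.e.\ $x$; hence for the finite-measure set $D:=X\times[0,1)$ we have $T_\varphi^n(x,t)\in D$ for only finitely many $n$, for $(\mu\times\LebR)$-a.e.\ $(x,t)\in D$, so $D$ contains a wandering set of positive measure and $T_\varphi$ is not conservative, hence, by the equivalence recorded below, not recurrent. Conversely, I would record the standard fact that recurrence and conservativity coincide for skew products over an ergodic base: if $T_\varphi$ is conservative, then given $\mu(A)>0$ and $\epsilon>0$ almost every point of $A\times(-\epsilon/2,\epsilon/2)$ returns to it, which produces $n\ge1$ and $x\in A$ with $T^nx\in A$ and $|\varphi_n(x)|<\epsilon$, i.e.\ $T_\varphi^n(A\times\{0\})\cap(A\times(-\epsilon,\epsilon))\neq\emptyset$; the reverse implication between the two notions can be taken from Schmidt's theory of essential values, or obtained from the dissipativity dichotomy below.

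The substance of the theorem is thus the implication $\int\varphi=0\Rightarrow T_\varphi$ conservative, and I would argue it by contradiction. The fibrewise translations $(x,t)\mapsto(x,t+s)$ commute with $T_\varphi$ and preserve $\mu\times\LebR$, hence they fix the Hopf decomposition of $T_\varphi$; so its dissipative part has the form $D_0\times\R$ with $D_0$ a $T$-invariant set, and by ergodicity $T_\varphi$ is either conservative or \emph{totally} dissipative. In the latter case $\sum_{n\in\Z}F\circ T_\varphi^n<\infty$ a.e.\ for every nonnegative $F\in L^1(\mu\times\LebR)$; applying this to $F=\chi_{X\times[M,M+1)}$ for all $M\in\Z$ forces $\#\{n:|\varphi_n(x)|\le M\}<\infty$ for every $M$, i.e.\ $|\varphi_n(x)|\to\infty$ for a.e.\ $x$. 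But Birkhoff's theorem gives $\varphi_n(x)/n\to0$, and these two facts are incompatible. In the model case $\varphi_n(x)\to+\infty$ a.e.\ (the case $-\infty$ being symmetric), the sets $A_k:=\{x:\varphi_m(x)\ge1\text{ for all }m\ge k\}$ exhaust $X$, so $\mu(A_k)>1/2$ for some $k$; by Birkhoff applied to $\chi_{A_k}$, for a.e.\ $x$ at least half of the first $n$ iterates of $x$ lie in $A_k$, one selects of order $n/k$ of the corresponding times pairwise at distance $\ge k$, and along them the values $\varphi_m(x)$ increase by at least $1$ at each step (since $\varphi_{m'-m}(T^mx)\ge1$ whenever $T^mx\in A_k$ and $m'-m\ge k$); this gives $\varphi_n(x)\ge\alpha n$ for some $\alpha>0$ along a subsequence, contradicting $\varphi_n(x)/n\to0$.

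The point I expect to be the main obstacle is the reduction of the general $L^1$ case to this model case. When $\varphi$ is bounded the sequence $(\varphi_n(x))_n$ has bounded increments, so $|\varphi_n(x)|\to\infty$ automatically forces $\varphi_n(x)\to\pm\infty$, and the two $T$-invariant sets $\{\varphi_n\to\pm\infty\}$ partition $X$, so one of them is conull and the argument above applies. For general $\varphi\in L^1$ this same dichotomy works as long as $\varphi_n(x)$ eventually keeps a sign; the remaining possibility---$|\varphi_n(x)|\to\infty$ while $\limsup_n\varphi_n(x)=+\infty$ and $\liminf_n\varphi_n(x)=-\infty$ a.e.---has to be excluded by a more careful truncation/maximal-inequality argument, and this is the genuinely analytic step (the one carried out in Atkinson's paper). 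Granting it, $\int\varphi=0$ implies $T_\varphi$ conservative, hence recurrent, and combined with the second paragraph this establishes the equivalences in (2) and (3).
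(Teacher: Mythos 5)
First, a point of reference: the paper does not prove this statement at all --- it is quoted as a classical theorem of Atkinson and Schmidt, with the references \cite{atkinson_recurrence_1976} and \cite{schmidt_cocycles_1977} standing in for the proof. So there is no in-paper argument to compare yours against; what can be judged is whether your sketch is a self-contained proof of the quoted result, and it is not quite. The parts you do carry out are sound: the rectangle computation for invariance of $\mu\times\LebR$, the observation that the vertical translations commute with $T_\varphi$ and hence force the Hopf decomposition to be of the form $D_0\times\R$ with $D_0$ a $T$-invariant set (so that $T_\varphi$ is either conservative or totally dissipative), the deduction that total dissipativity forces $|\varphi_n(x)|\to\infty$ a.e., and the ``model case'' estimate showing that $\varphi_n(x)\to+\infty$ a.e.\ is incompatible with $\varphi_n(x)/n\to 0$. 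That much is a correct and standard skeleton.

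The genuine gap is exactly the one you flag yourself: for a general $\varphi\in L^1$ with zero mean, ruling out the regime where $|\varphi_n(x)|\to\infty$ while $\limsup_n\varphi_n(x)=+\infty$ and $\liminf_n\varphi_n(x)=-\infty$ a.e. Your $A_k$-argument needs the Birkhoff sums to eventually keep one sign, and for unbounded $\varphi$ nothing you have written forbids the sums from jumping between very large positive and very large negative values; replacing $A_k$ by $\{x:|\varphi_m(x)|\ge 1\ \forall m\ge k\}$ only yields that well-spaced values differ by at least $1$, which is no contradiction with $\varphi_n/n\to 0$. Excluding this oscillating case is precisely the analytic content of Atkinson's theorem, so ``granting it'' by citing Atkinson makes the argument circular as a proof of the statement; similarly, the implication ``recurrent $\Rightarrow$ conservative'', which you need to get non-recurrence from non-conservativity in the nonzero-mean direction, is deferred to Schmidt, i.e.\ to the other half of the very theorem being proved. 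What your argument does establish completely is the case of bounded (in particular finite-valued) cocycles, since then bounded increments force an eventual sign and the invariant sets $\{\varphi_n\to\pm\infty\}$ give the dichotomy by ergodicity; this covers the cocycle $f=\chi_{(0,1/2)}-\chi_{(1/2,1)}$ actually used in the paper, but not the $L^1$ statement of Theorem \ref{thm:recurrence_skewproducts} as quoted.
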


The second and third assertions in the theorem above are due to Atkinson and 
Schmidt, respectively. These recurrence properties naturally lead to the 
question of the ergodicity of the skew product, with respect to the product 
measure described above, for skewing functions of zero mean. {However, let us 
point out that if the skewing function $\varphi$ under consideration is 
integer-valued, then the map $T_\varphi$, independently of the map $(T, X, 
\mathcal{B}, \mu)$, is never ergodic with respect to the measure $\mu \times 
\LebR$ (although the conclusions of Theorem \ref{thm:recurrence_skewproducts} 
still hold). Indeed, in this case, the pairs $(x, r) \in X \times \R$ such that 
the fractional part of $r$ lies in the interval $[0,1/2]$ form an invariant set 
of positive (but not full) measure. Nevertheless, we can still consider these 
maps as skew-products over $X \times \Z$ and study their ergodic properties 
with respect to the invariant product measure $\mu \times \nu_\Z$.  Similarly, 
one can naturally extend these considerations to skewing functions taking a 
finite number of values and generating a discrete subgroup of $\R$.}
 
Let us mention that the known methods to prove ergodicity for both $\Z$ and $\R$ valued skewing functions (and, more generally, for cocycles taking values on locally compact abelian groups) are very similar, as they mostly rely on \emph{essential values} (see Section \ref{sc:essential_values}).

\subsection{Previous results on ergodicity}
In the following, we focus our discussion on systems $T_\varphi$ of the form \eqref{eq:general_skewproduct} where the base transformation $T$ is given by an interval exchange transformation (IET) on the unit interval $I$ and the skewing function $\varphi: I \to \R$ has zero mean.} 

For interval exchange transformations with only two intervals, the base system 
$T$ can be naturally identified with a circle rotation. {In this context, the 
family of skew products 
\begin{equation}
\label{eq:rotation_skewproduct}
\Function{S_{\alpha, \beta}}{S^1 \times \R}{S^1 \times \R}{(x,r)}{(x+\alpha\mod 1,\quad r+\varphi_\beta (x))} ,\qquad \varphi_\beta(x)=\chi_{(0,\beta)}-\beta,
\end{equation}
where $(\alpha,\beta)\in S^1 \times S^1$, has been thoroughly studied by several authors. Partial results on the ergodicity of $S_{\alpha, \beta}$, with respect to the product of Lebesgue measures on the circle and the real line, were first obtained by Schmidt \cite{schmidt_cylinder_1978}, for $\alpha = \tfrac{\sqrt{5} - 1}{4},$ $\beta = \tfrac{1}{2}$, and by Conze and Keane \cite{conze_ergodicite_1976}, for any $\alpha \in \R \setminus \Q,$ $\beta = \tfrac{1}{2}$.} This was later generalized by Oren \cite{oren_ergodicity_1983} who gave a complete classification of the cocycles $S_{\alpha, \beta}$ with respect to their ergodic properties. Namely, he proved that for every irrational $\alpha\in S^1$, the map $S_{\alpha, \beta}$ is ergodic if and only if $\beta$ is rational or $1,\alpha$ and $\beta$ are rationally independent, where, for $\beta$ rational, the skew-product is considered on $S^1 \times \frac{1}{q}\Z$ equipped with the product of the Lebesgue and counting measures and $q$ is the smallest possible denominator of $\beta$. 

 In particular, in relation to the main subject of this article, taking $\beta = \frac{1}{2}$ and identifying $\varphi_\beta$ with the map $\frac{1}{2}(\chi_{(0,1/2)}-\chi_{(1/2,1)}) = \frac{1}{2}f$, the results above show that the map $T_f$ as in \eqref{eq:IET_skewproduct} is ergodic, for any minimal IET of two intervals. 
 
 {Other families of ergodic skew products over rotations, considering different 
 classes of skewing functions, have been found by Hellekalek and Larcher 
 \cite{hellekalek_ergodicity_1986}, Pask \cite{pask_skew_1990}, Conze and 
 Piękniewska \cite{conze_multiple_2014}.}

When the number of exchanged intervals is bigger or equal to $3$, there are no 
such complete results. Conze and Frączek \cite{conze_cocycles_2011} have 
constructed a family of ergodic skew products over self-similar interval 
exchange transformations with a cocycle given by a piecewise continuous 
function that is constant over the exchanged intervals. {Ralston and 
Troubetzkoy \cite{ralston_ergodicity_2013} have shown ergodicity for a class of 
odd-valued piecewise constant skew products over a certain two-parameter family 
of IETs. More recently,} Chaika and Robertson \cite{chaika_ergodicity_2019} 
showed that for bounded type interval exchange transformations and typical 
piecewise constant cocycles, the associated skew product is ergodic. {Let us 
point out that the ergodicity of particular classes of skew products over IETs 
has also been established in connection to the ergodicity of certain flows on 
surfaces \cite{ralston_residual_2017, fraczek_ergodicity_2018, 
berk_ergodicity_2023}.

Notice that although skew-products over IETs may be ergodic with respect to the product measures described above, they are generally not uniquely ergodic. For instance, by a result of Aaronson et al. \cite{aaronson_invariant_2002}, for Lebesgue-a.e. $(\alpha,\beta)\in S^1 \times S^1$ the skew product $S_{\alpha, \beta}$ given by \eqref{eq:rotation_skewproduct} admits infinitely many distinct (not proportional) locally finite ergodic measures, known as \emph{Maharam's measures}. 

Finally, let us mention that negative ergodicity results also exist in the literature. Indeed,} Frączek and Ulcigrai \cite{fraczek_non-ergodic_2014} obtained a family of skew products arising from certain infinite billiards, which have very intricate and non-trivial ergodic decomposition.

{
\subsection{An instance of ergodicity over symmetric IETs} In this work,} we 
prove that for almost every \emph{symmetric} interval exchange transformation 
$T$ on the unit interval (see Section \ref{sc:IETs} for a precise definition), 
the skew product $T_f$ with cocycle $f=\chi_{(0,1/2)}-\chi_{(1/2,1)}$ is 
ergodic with respect to the product of Lebesgue and counting measures (see 
Theorem \ref{thm: ergodicity1}). 
According to our knowledge, {this} is the first full-measure result when the 
number of exchanged intervals is bigger than 2. This also partially answers 
Question 1.4 in \cite{chaika_ergodicity_2019}. However, the assumption of the 
symmetricity of the underlying permutation plays a crucial role in our 
argument, and it seems to be impossible to transfer our proof's methods to 
general combinatorial data. Hence the problem of ergodicity of a skew product 
over a typical IET with the cocycle $f=\chi_{(0,1/2)}-\chi_{(1/2,1)}$, 
regardless of the combinatorial data, remains open. 

\section{Main Result}
\label{sc:main}
Our goal is to prove the following. 
{	\begin{theorem}\label{thm: ergodicity1}
		Let $f:[0, 1)\to\Z$ be given by $f=\chi_{(0,1/2)}-\chi_{(1/2,1)}.$
Then, for almost every symmetric interval exchange transformation $T$ on $[0, 1)$, the skew product $T_f: [0,1)\times \Z \to [0,1)\times \Z$ given by \eqref{eq:IET_skewproduct} 
is ergodic with respect to the product of the Lebesgue and counting measures.
	\end{theorem}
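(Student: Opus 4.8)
The plan is to establish ergodicity via the method of essential values: one shows that the group of essential values of the $\Z$-valued cocycle $f$ (relative to the base IET $T$, or rather to an induced/accelerated version of it) is all of $\Z$, and combines this with ergodicity of $T$ itself to conclude ergodicity of the skew product $T_f$. For a typical symmetric IET, $T$ is minimal and uniquely ergodic (Keane's theorem and Masur--Veech), so the only obstruction to ergodicity of $T_f$ is a nontrivial subgroup of essential values; thus it suffices to produce, for a.e.\ symmetric $T$, a sequence of Rokhlin towers (coming from the Rauzy--Veech induction) on which the Birkhoff sums of $f$ are highly controlled and take the value $\pm 1$ with definite density while the towers fill most of the space. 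The symmetry of the permutation is what we will exploit: under the symmetric combinatorial data the cocycle $f$ is itself antisymmetric with respect to the involution $x \mapsto 1-x$ that conjugates $T$ to $T^{-1}$, so the cocycle values over dual intervals cancel, and this forces enough Birkhoff sums over induction intervals to vanish or to equal a single unit.

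The key steps, in order, are as follows. First, recall the Rauzy--Veech renormalization for IETs and its acceleration (Zorich's), and record that for a.e.\ symmetric $T$ the induction is defined for all time, the cocycle of induction matrices is Oseledets-generic, and the induced maps $T_n$ on the shrinking intervals $I^{(n)}$ are again IETs with controlled combinatorics. Second, track the Birkhoff sums of $f$ along the induction: to each letter $\alpha$ of the alphabet and each level $n$ associate the sum $f^{(h_\alpha^{(n)})}$ of $f$ over the $\alpha$-tower of height $h_\alpha^{(n)}$; these sums evolve by the (integer) Rauzy--Veech cocycle, exactly as the return times do, so they can be computed combinatorially. Third, use symmetry: the hyperelliptic involution intertwines towers with their mirror images and sends $f$ to $-f$, which pins the tower sums into a rigid pattern — in particular one shows that for infinitely many $n$ there is a tower whose $f$-Birkhoff sum equals $\pm 1$ while a positive proportion of the space is covered by towers with zero $f$-sum. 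Fourth, feed this into the essential-values machinery: since along these times one sees transfer-function values differing by exactly $1$ on sets of definite measure, $1 \in E(f)$, hence $E(f) = \Z$; together with ergodicity and minimality of the base this yields ergodicity of $T_f$ with respect to $\mathrm{Leb} \times \nu_\Z$. Fifth, promote ``infinitely many good $n$'' to ``a.e.\ $T$'' using the ergodicity of the renormalization (Zorich) cocycle and a Borel--Cantelli / positive-frequency argument, so that the good combinatorial configurations occur with full probability.

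The main obstacle I expect is the third step: extracting, from the symmetry alone, a genuine lower bound on the measure of the part of the interval covered by towers on which the Birkhoff sum of $f$ is controlled (zero or a single unit), uniformly along a positive-density subsequence of induction times. This requires understanding how the antisymmetry of $f$ interacts with the Rauzy--Veech moves — i.e.\ keeping track of the ``defect'' between the naive expectation $h_\alpha^{(n)}/2$ and the actual number of times the orbit segment defining the $\alpha$-tower lands in $(0,1/2)$ versus $(1/2,1)$ — and showing this defect stays bounded (or stays equal to a fixed small value) for many $n$. A secondary difficulty is handling the discontinuity of $f$ at $1/2$: one must ensure the point $1/2$ and its orbit do not create extra combinatorial singularities beyond the IET's own discontinuities, which for a typical symmetric IET can be arranged (the orbit of $1/2$ is generic), but requires care in defining the induced cocycles. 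Finally, one must verify that the $\Z$-version of the essential-values criterion applies verbatim — as the excerpt notes, the methods for $\Z$- and $\R$-valued cocycles are parallel, so this should be routine but needs to be stated precisely for the product-with-counting-measure setting.
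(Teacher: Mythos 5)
Your overall framework—essential values, Rauzy--Veech towers, and the involution $x\mapsto 1-x$ conjugating a symmetric $T$ to $T^{-1}$—is the same as the paper's, but the step you yourself flag as the main obstacle (your third step) is precisely where the argument is missing, and the mechanism you propose for it does not work as stated. You want to track the ``special Birkhoff sums'' of $f$ over the towers and let them ``evolve by the integer Rauzy--Veech cocycle, exactly as the return times do.'' That fails because the discontinuity of $f$ at $\tfrac12$ is generically \emph{not} a discontinuity of $T$ nor of any induced map $T_n$: $f$ is not piecewise constant over the exchanged intervals of the induced IETs, so the tower sums $S_{h^{n}_\alpha}f$ are not constant on the bases $I^n_\alpha$ and are not propagated by the Rauzy--Veech matrices. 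Exactly one floor of one tower contains $\tfrac12$, and all the difficulty is concentrated there; the ``defect'' you mention (visits to $(0,\tfrac12)$ versus $(\tfrac12,1)$) is the whole problem, and symmetry of the permutation alone does not bound it tower by tower. Relatedly, your claim that a positive proportion of the space is covered by towers with zero $f$-sum while some tower has sum $\pm1$ is asserted, not derived.

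The paper closes this gap by working pointwise at $\tfrac12$ rather than with whole towers. Since $\mathcal I(x)=1-x$ fixes $\tfrac12$ and $f$ is odd, one gets the exact identities $S_{n+1}f(\tfrac12)=S_{-n}f(\tfrac12)$ and hence $S_{2n}f(T^{-n}(\tfrac12))=-f(T^{n}(\tfrac12))$ (Lemma \ref{lem: odd_values}, Corollary \ref{cor: cancellations}). The second ingredient, which your sketch has no substitute for, is quantitative control of \emph{where} $\tfrac12$ sits inside the Rauzy--Veech towers: Lemma \ref{lem: positions_of_centers} (imported from \cite{berk_ergodicity_2023-1}) shows that along a full-measure set of symmetric IETs one can find renormalization times $n_k$ at which the projection of $\tfrac12$ to $I^{n_k}$ is a center of an exchanged interval or the midpoint of $I^{n_k}$, and the lengths vector lies in a prescribed set $A$ (nearly balanced lengths for $d$ odd, a slightly unbalanced variant for $d$ even). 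With rigidity times $\h_k$ equal to the sum of two (odd case) or four (even case) tower heights, this forces $T^{\pm\h_k}(\tfrac12)$ back into the same floor as $\tfrac12$, close to its center, so the cancellation identity gives $S_{\h_k}f(\tfrac12)=0$ and then $S_{\h_k}f\equiv -1$ on an explicit sub-tower $\Xi_k$ of definite measure; Proposition \ref{prop:intessentialvalue} (partial rigidity sets with constant Birkhoff sums) then yields $-1\in Ess(T_f)$ and Theorem \ref{thm: erg_criterion} gives ergodicity. Without an argument of this kind—controlling the orbit of the single point $\tfrac12$ and its position in the towers, and choosing $A$ and $\h_k$ to exploit the involution—your proposal remains an outline with its central step unproven; the discontinuity at $\tfrac12$ is not a ``secondary difficulty'' but the heart of the proof.
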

}

	Note that the skew products considered above always have {\it ergodic index 1}, that is, the automorphism $T_f\times T_f:\left([0, 1)\times \Z\right)^2\to \left([0, 1)\times \Z\right)^2$ is never ergodic with respect to the square of the product of Lebesgue and counting measures. Indeed, note that for Lebesgue almost every pair $(x,y)\in [0, 1)^2$, we have $f(x)+f(y)=0\mod 
	2$. This implies that the set $[0, 1)^2\times G$, where $G\subset \Z^2$ is the subgroup generated by $(1,1)$ and $(-1,1)$, is a non-trivial $T_f\times T_f$-invariant subset.

Before describing the structure of the proof, let us point out that Theorem \ref{thm: ergodicity1} cannot be extended to all symmetric interval exchange transformations that are ergodic with respect to the Lebesgue measure. Indeed, if $T$ is a symmetric interval exchange transformation such that $\tfrac{1}{2}$ is a left endpoint of one of the exchanged intervals, then 
\[
f = \chi_{(0,1/2)}-\chi_{(1/2,1)}=\chi_{(0,1/2)}-\chi_{(0,1/2)} \circ T,
\]
that is, the function $f$ is a coboundary with respect to $T$, which implies that the \emph{Birkhoff sums} $S_nf(x)$ (see \eqref{eq:Birkhoff_sums} for a definition) are bounded uniformly in $x\in[0,1)$ and $n\in\N$, which prohibits the ergodicity of $T_f$.

{\subsection{Proof strategy} We will prove our main result using 
\emph{essential values} -- a crucial tool used to prove ergodicity. As we shall 
recall in Section \ref{sc:essential_values}, these essential values 
characterize the ergodicity of skew-product under quite general conditions 
(Theorem \ref{thm: erg_criterion}). Moreover, for systems such as 
\eqref{eq:IET_skewproduct}, we can provide sufficient conditions for ergodicity 
(Proposition \ref{prop:intessentialvalue}) through \emph{partial rigidity 
sequences} (see Section \ref{sc:essential_values} for precise definitions of 
these objects). We will build these sequences of sets by exploiting the 
symmetricity assumption on the IET to control Birkhoff sums of the form 
$S_nf(\tfrac{1}{2})$ (see Corollary \ref{cor: cancellations}) and extending 
this control to larger sets through control on the location of $\tfrac{1}{2}$ 
in the Rohlin towers associated to the \emph{Rauzy-Veech inductions} of $T$ 
(see Sections \ref{sc:IETs} and \ref{sc:centers}). This type of control on the 
location was previously shown in \cite{berk_ergodicity_2023-1}, and the precise 
formulation we give here (see Lemma \ref{lem: positions_of_centers}) is a 
direct consequence of the results therein.

}
{As mentioned before, our methods do not enable us to obtain similar results 
for permutations other than symmetric ones. Indeed, to show the existence of 
non-trivial essential values, one of the most crucial elements is to find 
arbitrarily large times $n \in \N$ for which the Birkhoff sums $S_n f$ are 
uniformly bounded on a relatively large set. For this purpose, in Section 
\ref{sec: symmetricinv}, we use the fact that every symmetric IET is conjugate 
to its inverse via symmetric reflection of the interval. This property is 
particular to symmetric IETs, and we are unaware of other methods that could 
allow us to extend our result to the general situation.}

{
\subsection{Outline of the article} The remainder of the paper is devoted to 
proving Theorem \ref{thm: ergodicity1}. Section \ref{sc:IETs} contains the 
necessary background on interval exchange transformations and introduces the 
notations used throughout the paper. In Section \ref{sc:essential_values}, we 
recall the notion of essential values and some additional facts and criteria 
concerning the ergodicity of skew products, taken mainly from 
\cite{conze_cocycles_2011}. Next, in Section \ref{sec: symmetricinv}, we 
present some facts concerning symmetric interval exchange transformations and 
\emph{odd cocycles} (see Definition \ref{def: odd_cocycle}), which will play a 
crucial role in the proof of our main result. This section should be compared 
with some parts of \cite{berk_ergodicity_2023-1}, where the idea of exploiting 
the symmetric permutation assumption is introduced to deal with a particular 
class of odd cocycles. {It should be highlighted that although the definition 
of odd cocycles used in \cite{berk_ergodicity_2023-1} is different from that 
given in the present work, the effects of these assumptions are similar.} 
Section \ref{sc:centers} introduces a result concerning the location of 
$\tfrac{1}{2}$ in the Rohlin towers associated with the Rauzy-Veech inductions 
of a typical IET $T$, which we borrow from \cite{berk_ergodicity_2023-1}. 
Finally, we prove our main result in Section \ref{sec: char}. }

\section{Interval exchange transformations}
\label{sc:IETs}
{We now present the objects used throughout the article. We derive our notation 
mostly from \cite{berk_backward_2022}.} Recall that an \emph{interval exchange 
transformation (IET)} $T$ of a bounded interval $I$ is a bijection on $I$ that 
is a piecewise translation. Such a map is fully characterized by the number of 
exchanged intervals, their length, and the ordering before and after 
transformation. More precisely, we can encode an IET with the help of a 
\emph{finite alphabet} $\mathcal A$ of $d\ge 2$ elements, which we use to index 
the \emph{exchanged intervals} $\{I_\alpha\}_{\alpha \in \A}$, a \emph{lengths 
vector} $\lambda \in \R_+^\A$ describing the length of these intervals, that is,
\[ \lambda_{\alpha}=|I_{\alpha}|, \quad \text{for every }\alpha\in\A,\]
where $|J|$ denotes the length of any subinterval $J \subseteq I$, and a \emph{permutation} $\pi$ encoding the exchange order. As the intervals are indexed using the finite alphabet $\A$, we consider $\pi$ as a pair $\pi=(\pi_0,\pi_1)$, where $\pi_0,\pi_1:\mathcal A\to \{1,\ldots,d\}$ are bijections describing the order of the intervals before and after the exchange, respectively.

Throughout this work, when considering an IET with permutation $\pi$, we will always assume that the permutation is \emph{irreducible}, i.e., that there is no $1\le k<d$ such that $\pi_1\circ \pi_0^{-1}\left(\{1,\ldots,k\}\right)=\{1,\ldots,k\}$. Otherwise, the system splits into two subsystems, both being interval exchange transformations, which can be considered separately. We denote the set of all irreducible permutations on a finite alphabet $\A$ by $\PermSpace$. 

In this article, we will be mostly concerned with \emph{symmetric} permutations, that is, permutations $\pi = (\pi_0, \pi_1) \in \PermSpace$ satisfying
\[
\pi_1\circ\pi_0^{-1}(k)=d+1-k,\quad\text{for every }k\in\{1,\ldots,d\}.
\]
We refer to the IETs with symmetric combinatorial data as \emph{symmetric IETs}. 

Normally, the lengths vector $\lambda$ of a given IET is an element of $\R_+^{\A}$. However, it is sometimes useful to consider the set of normalized lengths, that is, when 
$$|\lambda| = \sum_{\alpha \in \A} \lambda_\alpha = |I|=1.$$ 
In this case, $\lambda$ is an element of the real positive unit simplex 
$$\Simplex:=\left\{\lambda\in \R_{+}^{\mathcal A} \,\left|\,|\lambda|=1 \right\}\right. .$$
From now on, if there is no risk of confusion, we will often identify an IET $T$ with a pair $(\pi, \lambda) \in \PermSpace \times \R_+^\A$ as defined above. We endow $\R^{\A}_{+}$ and $\Simplex$ with the Lebesgue measure. 

An important tool to investigate the properties of a typical IET (with respect 
to the product of the counting measure on $\PermSpace$ and the Lebesgue measure 
on $\R^{\A}_{+}$) is the \emph{Rauzy-Veech induction}, which we denote by $R$. 
This induction procedure assigns to a.e. IET $(\pi,\lambda)\in \PermSpace\times 
\R^{\A}_{+}$ with exchanged intervals {$\{I_\alpha\}_{\alpha \in \A}$}, another 
interval exchange transformation $R(\pi, \lambda) \in \PermSpace\times 
\R^{\A}_{+}$ (in particular, this new IET has the same number of exchanged 
intervals as the initial one) by considering the first return map to either the 
interval $I\setminus I_{\pi_0^{-1}(d)}$ or $I\setminus I_{\pi_1^{-1}(d)}$, 
whichever is longer. If these two intervals have the same length, then $R$ is 
not well-defined. However, this is an exceptional scenario as for a.e. IET 
$(\pi,\lambda)\in \PermSpace\times \R^{\A}_{+}$, the induction $R$ can be 
iterated indefinitely. For the sake of simplicity, we will assume that $R$ is 
defined for every IET. 

Given $T=(\pi,\lambda) \in \PermSpace\times \R^{\A}_{+}$, we denote $T_n=R^n(\pi,\lambda)=(\pi^n,\lambda^n)$, for any $n\in\N$. The domain of $T_n$ is denoted by $I^n$ and the intervals exchanged by $T_n$ are denoted by $\{I^n_{\alpha}\}_{\alpha\in\A}$.

{One of the main reasons to {use} the Rauzy-Veech induction $R$ is to construct 
Rohlin towers for a given IET $T = (\pi, \lambda)$. Namely, for every $n \in 
\N$, we consider the intervals $I^n_{\alpha}$ and their \emph{return times} to 
$I^n$ via $T$, which we denote by $h^n_\alpha\in\N$. In this way, we obtain $d$ 
distinct Rohlin towers of the form $\{I^n_{\alpha}, T(I^n_{\alpha}),\dots, 
T^{h^n_\alpha - 1}(I^n_{\alpha})\}$, with $\alpha \in \A$, to which we refer as 
\emph{Rauzy-Veech towers}. It is not difficult to see that $T$ acts on each 
tower level via translation.}

The {minimal nonempty} sets of irreducible permutations $\mathfrak{R} \subseteq 
\PermSpace$ such that $\mathfrak{R} \times \R^\A_+$ is invariant under the 
action of $R$ are called \emph{Rauzy classes}. The sequence of permutations 
associated with the IET $R^n(\pi,\lambda)$ is called a \emph{Rauzy orbit}. Any 
finite subsegment $\gamma$ of the Rauzy orbit is called {a} \emph{Rauzy path}, 
and we denote its length by $|\gamma|$. A Rauzy path $\gamma$ is said to be 
{\emph{positive} if for every $(\pi,\lambda)$ that follows the path $\gamma$ by 
Rauzy-Veech induction and every $\alpha,\beta\in\A$, there exists $0\le k 
<h^{|\gamma|}_{\alpha}$ such that $T^k(I^{|\gamma|}_\alpha)\subset 
I^{|\gamma|}_\beta$. {By \cite[Lemma 1.2.4]{marmi_cohomological_2005}, for any 
IET for which the Rauzy-Veech induction can be iterated indefinitely, {there 
exists $n \in \N$ such that the Rauzy path obtained after $n$ iterations of the 
Rauzy-Veech induction is positive.}}}


One can also consider a \emph{normalized Rauzy-Veech induction} $\tilde R: \PermSpace\times \Simplex\to \PermSpace\times \Simplex$, which assigns to every IET $(\pi,\lambda)\in \PermSpace\times\Simplex$ the IET $R(\pi,\lambda)$ linearly rescaled by the length of the domain. The importance of this object follows from the following well-known results obtained independently by Masur \cite{masur_interval_1982} and Veech \cite{veech_gauss_1982}.
\begin{theorem}[]\label{thm: RVergodic}
For every Rauzy {class} $\mathfrak{R} \subseteq \PermSpace$, the restriction of 
the normalized Rauzy-Veech induction $\tilde{R}$ to $\mathfrak{R} \times 
\Simplex$ is ergodic with respect to an invariant {infinite} measure. Moreover, 
this measure is equivalent to the product of the counting measure on 
$\mathfrak{R} $ and the Lebesgue measure on $\Simplex$. 
\end{theorem}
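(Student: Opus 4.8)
The plan is to produce the invariant measure by passing through the \emph{unnormalized} Rauzy-Veech induction $R$, and then to deduce ergodicity from the Markov structure of $\tilde R$ together with a projective-contraction estimate coming from positivity of Rauzy paths. On $\mathfrak R\times\R^\A_+$ each branch of $R$ is the restriction of a linear map, and one checks that the measure $\rho(\lambda)\,d\lambda$ on $\mathfrak R\times\R^\A_+$, with $\rho$ the explicit positive density homogeneous of degree $-d$ attached to the Rauzy-Veech combinatorics (for $d=2$ one can take $\rho(\lambda)=(\lambda_A\lambda_B)^{-1}$), is $R$-invariant. Since $\rho$ is homogeneous of degree $-d$ and $R$ commutes with the dilations $\lambda\mapsto t\lambda$, this measure disintegrates along rays through the origin as $\tfrac{dt}{t}$ times a measure $\mu$ on $\mathfrak R\times\Simplex$ which is then $\tilde R$-invariant and, being $\rho|_{\Simplex}$ times the Lebesgue measure on $\Simplex$, equivalent to the product of the counting measure on $\mathfrak R$ and the Lebesgue measure on $\Simplex$. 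This is the equivalence statement; it is due to Masur \cite{masur_interval_1982} and Veech \cite{veech_gauss_1982} (whose zippered-rectangle construction is an alternative route to the same $\mu$). Finally $\mu$ is infinite: $\rho|_{\Simplex}$ blows up at the faces of $\Simplex$ fast enough to be non-integrable --- visibly for $d=2$, where the density $(\lambda_A\lambda_B)^{-1}$ is non-integrable at both vertices, and in general this is the reason Zorich's acceleration of $\tilde R$ is needed to obtain a finite invariant measure.

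For the ergodicity I would use the Markov structure of $\tilde R$. For a finite Rauzy path $\gamma$ starting at a permutation of $\mathfrak R$, let $\Delta_\gamma\subseteq\mathfrak R\times\Simplex$ be the set of IETs whose first $|\gamma|$ Rauzy-Veech steps follow $\gamma$; in homogeneous coordinates $\Delta_\gamma$ is the cone $M_\gamma\cdot\R^\A_+$, where $M_\gamma$ is the nonnegative integer Rauzy-Veech matrix of $\gamma$. The sets $\Delta_\gamma$ with $|\gamma|=n$ partition $\mathfrak R\times\Simplex$ modulo $\mu$, and $\tilde R^n$ restricted to $\Delta_\gamma$ is a projective-linear bijection onto $\{\pi'\}\times\Simplex$, with $\pi'$ the endpoint of $\gamma$. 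Two estimates then drive the proof. First, by \cite[Lemma 1.2.4]{marmi_cohomological_2005} every sufficiently long Rauzy path contains a positive subpath; the matrix of a positive path has all entries strictly positive, and a strictly positive matrix contracts the Hilbert projective metric on $\Simplex$ with a uniform factor, so $\operatorname{diam}(\Delta_\gamma)\to0$ as $|\gamma|\to\infty$. Hence the cylinder partition is generating and $\mu$-a.e.\ point is the intersection of the cylinders containing it. Second, the explicit form of $\rho$ together with positivity of the $M_\gamma$ yields a bounded-distortion estimate: the Radon--Nikodym cocycle of $\tilde R^n$ with respect to $\mu$ has oscillation over $\Delta_\gamma$ bounded by a constant independent of $\gamma$, so that $\mu(B\cap\Delta_\gamma)/\mu(\Delta_\gamma)$ and $\mu(\tilde R^n(B\cap\Delta_\gamma))/\mu(\{\pi'\}\times\Simplex)$ are comparable up to this constant, for every measurable $B$.

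With these in hand, ergodicity follows by the classical Knopp--R\'enyi argument. If $B$ is $\tilde R$-invariant with $\mu(B)>0$, choose a $\mu$-density point $x$ of $B$ (possible since $\mu$ is locally equivalent to Lebesgue) and the cylinders $\Delta_{\gamma_n}\ni x$ with $|\gamma_n|\to\infty$; invariance of $B$ gives $\tilde R^{|\gamma_n|}(B\cap\Delta_{\gamma_n})=B\cap(\{\pi_n\}\times\Simplex)$ modulo $\mu$, and combining this with the bounded-distortion estimate and $x$ being a density point forces $B$ to be conull in $\{\pi_n\}\times\Simplex$ for $n$ large. Since $\mathfrak R$ is a single Rauzy class, $\tilde R$ sends a conull subset of each $\{\pi\}\times\Simplex$ onto a conull subset of any other, so $B$ is conull in $\mathfrak R\times\Simplex$; hence $\tilde R$ is ergodic. (Conservativity of $\tilde R$ for $\mu$ fits in the same framework --- for instance because $\mu$-a.e.\ Rauzy orbit returns infinitely often to any fixed cylinder of positive measure --- which, with ergodicity, justifies speaking of \emph{the} invariant measure.)

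I expect the main obstacle to be the ergodicity step and, within it, the two inputs above. The positivity of Rauzy paths is a genuine combinatorial fact about the Rauzy graph --- I would cite it via \cite[Lemma 1.2.4]{marmi_cohomological_2005} rather than reprove it --- and it is precisely what makes the cylinder partition generating; without it the Knopp--R\'enyi scheme has nothing to iterate. The bounded-distortion bound is the other delicate point: it has to be extracted from the explicit density $\rho$ of the first paragraph, with the positivity of the matrices $M_\gamma$ again controlling the oscillation of the Radon--Nikodym cocycle. Finally, a difficulty pervading the whole argument is that $\mu$ is infinite, so no finite-measure ergodic theorem is available; the Markov/cylinder formalism is what carries the proof, since it reduces every estimate to the \emph{finite} conditional measures $\mu(\,\cdot\mid\Delta_\gamma)$.
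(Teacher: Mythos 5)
First, a point of reference: the paper does not prove Theorem \ref{thm: RVergodic} at all --- it is the classical Masur--Veech theorem, quoted with citations to \cite{masur_interval_1982} and \cite{veech_gauss_1982}. So your sketch can only be measured against the standard proofs in the literature, whose overall architecture you do reproduce: an explicit invariant density for the (un)normalized induction, the Markov cylinder structure of $\tilde R$, positivity of Rauzy matrices, a distortion estimate, and a density-point (Knopp--R\'enyi) argument. Your first paragraph is essentially an appeal to the cited works (the existence and invariance of the homogeneous density $\rho$ \emph{is} the substance of Veech's zippered-rectangle construction --- note that plain Lebesgue measure is not invariant for the unnormalized induction on the length cone, since that map is two-to-one --- and your $d=2$ formula is correct); citing this is legitimate, but it means the only part you actually argue is the ergodicity step.

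And there the two key inputs are stated incorrectly, which is a genuine gap. \cite[Lemma 1.2.4]{marmi_cohomological_2005} does not say that every sufficiently long Rauzy path contains a positive subpath; it says, roughly, that a path in which \emph{every letter wins} enough times is positive. There are arbitrarily long paths with no positive subpath --- e.g.\ a path repeating a single arrow of the Rauzy diagram, where the same letter keeps winning --- and for such paths the cylinder $\Delta_\gamma$ does not have small diameter, so the uniform claim $\operatorname{diam}(\Delta_\gamma)\to 0$ as $|\gamma|\to\infty$ fails, and with it your ``generating partition'' justification. Likewise, the oscillation of the Jacobian of $\tilde R^{|\gamma|}$ on $\Delta_\gamma$ is governed by the ratio of the column sums of $M_\gamma$, which is unbounded along such paths; even with respect to $\mu$ the distortion over all cylinders is unbounded (already visible for $d=2$: the cylinders of the Farey-type map accumulating at the indifferent fixed point have unbounded distortion --- this is precisely why Zorich acceleration is introduced). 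What is true, and what the actual proof uses, is the almost-everywhere statement: for a.e.\ IET every letter wins infinitely often, so the orbit's path contains infinitely many disjoint occurrences of a \emph{fixed} positive path $\gamma_*$; the distortion constant is uniform only over cylinders whose path ends with $\gamma_*$ (since $M_{\gamma'}M_{\gamma_*}$ has comparable columns, with constant depending on $\gamma_*$ alone), and one must then show that these special cylinders shrink around a.e.\ point and can be used as a density basis, or iterate the resulting uniform-fraction estimate \`a la Knopp. That restriction-plus-recurrence step is the delicate content of the Masur--Veech ergodicity argument, and as written your proof replaces it with uniform statements that are false.
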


Let us recall the following well-known fact concerning the return times of the 
Rauzy-Veech towers and {positive} Rauzy paths.

\begin{proposition}[]\label{prop: gammaconstant}
Let $\gamma$ be a finite {positive} Rauzy path in $\PermSpace$. There exists 
$\rho(\gamma) > 0$ such that for every IET $(\pi,\lambda)\in 
\PermSpace\times\R_{+}^\A$ and every $n\ge|\gamma|$, if the Rauzy path obtained 
by acting $|\gamma|$ times on $R^{n-|\gamma|}(\pi,\lambda)$ by $R$ is equal to 
$\gamma$, then
\[
\max_{\alpha,\beta\in\A}\frac{h^n_{\alpha}}{h^n_{\beta}}<\rho(\gamma).
\] 
\end{proposition}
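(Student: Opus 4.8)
The plan is to read the statement off the standard relation between the heights of the Rauzy--Veech towers at levels $n-m$ and $n$ of the induction. Write $m := |\gamma|$ and fix an IET $(\pi,\lambda)$ together with $n \ge m$ such that the Rauzy path joining $R^{n-m}(\pi,\lambda)$ to $R^{n}(\pi,\lambda) = T_n$ is $\gamma$. Since $R^{n}(\pi,\lambda) = R^{m}\bigl(R^{n-m}(\pi,\lambda)\bigr)$, the $\alpha$-th Rauzy--Veech tower of $T_n$ over its own base, when built instead over the base $R^{n-m}(\pi,\lambda)$, has each of its levels contained in a single interval $I^{n-m}_\beta$; stacking over each such level the corresponding Rauzy--Veech tower of $R^{n-m}(\pi,\lambda)$ over the original base recovers the $\alpha$-th Rauzy--Veech tower of $T_n$ over the original base. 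Counting heights, this is the classical identity
\[
h^n_\alpha \;=\; \sum_{\beta \in \A} (B_\gamma)_{\alpha\beta}\, h^{n-m}_\beta, \qquad \alpha \in \A,
\]
where $(B_\gamma)_{\alpha\beta}$ is the number of levels of the $\alpha$-tower of $T_n$ over the base $R^{n-m}(\pi,\lambda)$ that lie inside $I^{n-m}_\beta$. Here $B_\gamma$ is a matrix of non-negative integers depending only on $\gamma$ (it is the product of the $m$ elementary matrices attached to the individual Rauzy--Veech steps of $\gamma$), and the assumption that $\gamma$ is positive says exactly that every entry of $B_\gamma$ is at least $1$.

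Granting this, the estimate is immediate. Set $N(\gamma) := \max_{\alpha,\beta \in \A}(B_\gamma)_{\alpha\beta}$, so that $1 \le (B_\gamma)_{\alpha\beta} \le N(\gamma)$ for all $\alpha, \beta$, and put $H := \sum_{\beta \in \A} h^{n-m}_\beta$. Since all return times $h^{n-m}_\beta$ are positive, the identity above yields, for every $\alpha \in \A$,
\[
H \;\le\; h^n_\alpha \;\le\; N(\gamma)\,H .
\]
Consequently $h^n_\alpha / h^n_\beta \le N(\gamma)$ for all $\alpha,\beta \in \A$, and the proposition follows on taking, say, $\rho(\gamma) := N(\gamma) + 1$, a positive constant that depends only on $\gamma$.

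I do not expect a real obstacle here: once the cocycle identity is in place, the conclusion is a one-line consequence of the positivity of $B_\gamma$. The points that require a little care are purely organisational --- realising $T_n$ as $R^m$ of the intermediate IET $R^{n-m}(\pi,\lambda)$ so that the hypothesis on the last $m$ Rauzy--Veech steps can be invoked, checking that the matrix $B_\gamma$ (hence $N(\gamma)$) depends on the path $\gamma$ alone, and matching the combinatorial definition of a positive Rauzy path with the statement that each Rauzy--Veech tower of $T_n$ over the base $R^{n-m}(\pi,\lambda)$ meets every interval $I^{n-m}_\beta$ (equivalently, that $B_\gamma$ has strictly positive entries). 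One could equally avoid matrices altogether: the lower bound $h^n_\alpha \ge H$ is exactly this last reformulation of positivity, while the upper bound $h^n_\alpha \le N(\gamma)\,H$ comes from the path-dependent bound $N(\gamma)$ on the number of levels of these towers.
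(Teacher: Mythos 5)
Your proof is correct and rests on exactly the mechanism the paper invokes: the paper simply cites Proposition 4.23 of \cite{viana_ergodic_2006} together with the remark that positivity of $\gamma$ is equivalent to positivity of the Rauzy matrix $B_\gamma$, which is precisely the cocycle identity $h^n_\alpha=\sum_\beta (B_\gamma)_{\alpha\beta}h^{n-m}_\beta$ and the entrywise bounds $1\le (B_\gamma)_{\alpha\beta}\le N(\gamma)$ you write out. Your argument is a self-contained version of that citation, and your constant $\rho(\gamma)=N(\gamma)+1$ indeed depends only on the path $\gamma$, as required.
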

{This is a direct consequence of Proposition 4.23 in \cite{viana_ergodic_2006}. 
Indeed, one can check that the fact of $\gamma$ being positive is equivalent to 
the so-called \emph{Rauzy matrix} {associated to $\gamma$} being positive (we 
refer the reader to \cite{viana_ergodic_2006} for a precise definition).} The 
most important aspect of the above result is the independence of the constant 
$\rho(\gamma)$ from the exact parameters of the IET.

A typical strategy (which we also follow) to control the lengths of the bases 
and the heights of the Rauzy-Veech towers is to fix a permutation $\pi$ (in our 
case, a symmetric one), choose an appropriate positive Lebesgue measure set $A 
\subseteq \Simplex$ (which corresponds to the possible lengths of the bases) 
such that for every $\lambda \in A$ the Rauzy orbit of $(\pi, \lambda)$ ends 
with the same {positive} Rauzy path $\gamma$, and use the ergodicity of $\tilde 
R$ to obtain the needed proportions of lengths of the bases and heights of the 
Rauzy-Veech towers for the iterates by $\tilde R$ that belong to the set 
$\{\pi\} \times A$. Notice that in this case, the control in the proportions of 
the heights follows from Proposition \ref{prop: gammaconstant}.

For additional details on interval exchange transformations and Rauzy-Veech induction, we refer the reader to \cite{viana_ergodic_2006} and \cite{yoccoz_interval_2010}.

\section{Skew products and essential values}
\label{sc:essential_values}

We intend to investigate the ergodicity of skew products through \emph{essential values}, first introduced by Schmidt in \cite{schmidt_cocycles_1977}. 

{Let $(T, X, \mathcal{B}, \mu)$ be a probability measure preserving 
transformation and $\varphi: X \to G$ be a measurable cocycle, where $(G, +)$ 
is a locally compact abelian group with Haar measure $\nu_G$. Let $\| 
\cdot\|_G$ be a topology-generating (group) norm for $G$ (see \cite[Corollary 
1.6.10]{aaronson_introduction_1997}). We say that $a \in G$ is an 
\emph{essential value} of $T_\varphi$ iff for every $A\in\mathcal B$ such that 
$\mu(A)>0$ and every $\epsilon > 0$ there exists $n\in\N$ satisfying
\[
\mu\left\{x\in A \,\left|\, T_\varphi^n(x,0) \in A \times \{g \in G \mid \|g - a\|_G < \epsilon \} \right.\right\}>0.
\]
We denote the set of all essential values of $T_\varphi$ by $Ess(T_\varphi)$. 

The following result shows that we can characterize ergodicity in terms of essential values. For a proof and additional details, we refer the interested reader to \cite[\S8.2]{aaronson_introduction_1997}.

\begin{theorem}[Essential values criterion]
\label{thm: erg_criterion}
	Assume that $(T, X,\mathcal B,\mu)$ is an ergodic probability measure preserving transformation and $\varphi: X \to G$ is measurable, where $G$ is a locally compact abelian group with Haar measure $\nu_G$. Then either $Ess(T_\varphi) = \emptyset$ or $Ess(T_\varphi)$ is a closed subgroup of $G$. Moreover, the skew product $T_\varphi$ is ergodic with respect to $\mu \times \nu_G$ if and only if $Ess(T_\varphi)=G$. 
\end{theorem}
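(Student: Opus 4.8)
The plan is to follow Schmidt's classical scheme: first extract the structure of $Ess(T_\varphi)$ directly from its definition, and then match it against ergodicity through the eigenfunction picture. One easy preliminary is that $Ess(T_\varphi)$ is closed — if $a_k\to a$ with each $a_k\in Ess(T_\varphi)$, then given $A\in\mathcal B$ with $\mu(A)>0$ and $\epsilon>0$, picking $k$ with $\|a_k-a\|_G<\epsilon/2$ and invoking the defining property of $a_k$ with tolerance $\epsilon/2$ yields the defining property of $a$ with tolerance $\epsilon$ by the triangle inequality for $\|\cdot\|_G$. A second preliminary is the standard dichotomy: if $T_\varphi$ is not conservative then $Ess(T_\varphi)=\emptyset$ and $T_\varphi$ is manifestly non-ergodic with $Ess(T_\varphi)\ne G$, so both assertions hold trivially; hence I assume henceforth that $T_\varphi$ is conservative, equivalently that $0\in Ess(T_\varphi)$ (this equivalence is itself a recurrence statement in the $\sigma$-finite space $X\times G$, obtained by applying the recurrence of the conservative system $T_\varphi$ to sets of the form $A\times\{g:\|g\|_G<\epsilon\}$).

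The crux is the identity
\[
Ess(T_\varphi)=H^{\perp}:=\{\,a\in G:\ \chi(a)=1\text{ for all }\chi\in H\,\},\qquad H:=\{\,\chi\in\widehat G:\ \chi\circ\varphi\text{ is an }S^1\text{-valued measurable }T\text{-coboundary}\,\},
\]
where ``$\chi\circ\varphi$ is an $S^1$-valued $T$-coboundary'' means there is a measurable $g:X\to S^1$ with $g\circ T=(\chi\circ\varphi)\,g$. Granting this identity, the theorem is immediate: $H$ is a subgroup of $\widehat G$ (transfer functions multiply and invert), so $Ess(T_\varphi)=H^{\perp}=\bigcap_{\chi\in H}\ker\chi$ is a closed subgroup of $G$, which is the first assertion (alternatively, the subgroup property can be seen elementarily by concatenating recurrence times: if $a,b\in Ess(T_\varphi)$, first return to $A$ with displacement $\approx a$, then from there return to $A$ again with displacement $\approx b$, giving $a+b\in Ess(T_\varphi)$, and a symmetric argument, using conservativity to recur backwards, gives $-a\in Ess(T_\varphi)$). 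For the second assertion, $T_\varphi$ is ergodic iff its only invariant $L^\infty$ functions are constant iff — decomposing an invariant function in the fibre variable $t\in G$ and using ergodicity of $T$ frequency by frequency — the only $\chi\in\widehat G$ with $\chi\circ\varphi$ a coboundary is $\chi=1$, i.e.\ $H=\{1\}$, i.e.\ $H^{\perp}=G$, i.e.\ $Ess(T_\varphi)=G$. For $G=\Z$ or $G=\R$, which is all the present paper needs, this fibrewise analysis is a genuine Fourier series, respectively Fourier transform, and everything is explicit.

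The inclusion $Ess(T_\varphi)\subseteq H^{\perp}$ is the easy half: from $g\circ T=(\chi\circ\varphi)\,g$ one gets $g(T^nx)=\chi(S_n\varphi(x))\,g(x)$ for all $n$; choosing by Lusin's theorem a positive-measure set $A$ on which $g$ is within $\delta$ of a constant and running the recurrence defining $a$ on $A$ with small tolerance, one finds $x$ for which $g(x)$ and $g(T^nx)$ are both near that constant while $\chi(S_n\varphi(x))$ is near $\chi(a)$, forcing $\chi(a)$ to be within $O(\delta)$ of $1$; letting $\delta\to 0$ gives $\chi(a)=1$.

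The opposite inclusion $H^{\perp}\subseteq Ess(T_\varphi)$ is where the real work lies, and I expect it to be the main obstacle: one must convert the \emph{failure} of the recurrence condition near a point $a\notin Ess(T_\varphi)$ into concrete cohomological data — a character $\chi$ with $\chi(a)\ne 1$ whose composition with $\varphi$ is an $S^1$-valued $T$-coboundary. This is a cocycle-reduction argument: one passes to the ergodic decomposition of the conservative system $T_\varphi$ — equivalently, to the Mackey (Poincaré) $G$-action attached to $\varphi$, which is an ergodic nonsingular $G$-action — observes that the closed subgroup of $G$ acting trivially on it is exactly $Ess(T_\varphi)$, and reads off the transfer function of the obstructing character from the structure of that action. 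This step uses the standard structure theory of cocycles over ergodic transformations; here I would follow Schmidt \cite{schmidt_cocycles_1977} and the exposition in \cite[\S8.2]{aaronson_introduction_1997} rather than redo it from scratch.
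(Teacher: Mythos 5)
The paper does not actually prove this theorem: it is quoted from Schmidt's theory of essential values and the reader is referred to \cite[\S 8.2]{aaronson_introduction_1997}, so the relevant comparison is with that standard argument. Your outline deviates from it at its crux, and the deviation introduces a genuine gap. The identity you build everything on, $Ess(T_\varphi)=H^{\perp}$ with $H=\{\chi\in\widehat G:\ \chi\circ\varphi\ \text{is an } S^1\text{-coboundary}\}$, is not a theorem for general measurable cocycles into a noncompact $G$. Only the inclusion $Ess(T_\varphi)\subseteq H^{\perp}$ (your Lusin argument) is valid in general. The reverse inclusion, and likewise your claim that ergodicity of $T_\varphi$ is equivalent to $H=\{1\}$ via a ``frequency by frequency'' decomposition of an invariant function, are exactly the statements that fail for non-regular cocycles: an invariant function on $X\times\Z$ or $X\times\R$ is only $L^\infty$ in the fibre, its fibrewise Fourier transform is a pseudomeasure, and non-ergodicity does not produce any unimodular eigenvalue. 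In Mackey-action language: $Ess(T_\varphi)$ is the kernel of the Mackey $G$-action, while $H$ is its $L^\infty$-eigenvalue group; for a recurrent cocycle whose Mackey action is properly ergodic and weakly mixing one has $H=\{1\}$, hence $H^{\perp}=G$, while $T_\varphi$ is not ergodic and $Ess(T_\varphi)\subsetneq G$. Such cocycles (non-regular ones, including $\Z$- and $\R$-valued cocycles over ergodic probability-preserving bases) are well known to exist, so there is in general no ``obstructing character whose transfer function can be read off from the structure of that action'', and your final deferral to Schmidt/Aaronson does not close the gap, because the cited theory proves the theorem by a different route rather than by your identity.

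The standard proof, which is what the citation supplies, avoids characters entirely: one shows directly that $a\in Ess(T_\varphi)$ if and only if the fibre translation $\sigma_a(x,g)=(x,g+a)$ preserves every $T_\varphi$-invariant set mod $\mu\times\nu_G$. The nontrivial direction takes $a\notin Ess(T_\varphi)$, hence a set $A$ and $\epsilon>0$ witnessing the failure of the defining condition, and saturates $A\times\{\|g\|_G<\epsilon/2\}$ under $T_\varphi$ to produce an invariant set essentially disjoint from its $\sigma_a$-translate; this simultaneously yields the ergodicity criterion (if $Ess=G$, invariant sets are translation-invariant in the fibre and base ergodicity finishes; if $Ess\neq G$, the constructed set contradicts ergodicity) and, together with the concatenation argument, the closed-subgroup property. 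Your parenthetical elementary argument for the subgroup structure is essentially this concatenation and is fine (modulo the small bookkeeping needed because the paper's definition only quantifies over $n\in\N$, so obtaining $-a$ requires using invertibility or running the displacement backwards from the return set), and the closedness argument is correct; but the second assertion of the theorem, which is the one the paper actually uses, is not established by your proposal as written. The character picture you describe is sound only when $G$ is compact (where $L^2(X\times G)$ decomposes over $\widehat G$) or under a regularity hypothesis on the cocycle, neither of which is available here.
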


Using the following definition, we will provide sufficient conditions for an integer to be an essential value of a skew product over a compact metric space with a $\Z$-valued cocycle.} Given a probability measure preserving transformation $(T, X,\mathcal B,\mu)$ on a compact metric space $X$, a sequence of measurable subsets $(\Xi_n)_{n \in \N}$ of $X$ is called a \emph{partial rigidity sequence} if there exists an increasing sequence of natural numbers $(\h_n)_{n \in \N}$, called \emph{rigidity times}, and $\delta>0$ such that: 
\begin{enumerate}[(i)]
	\item \label{cond:towers_measure} $\mu(\Xi_n) \to \delta > 0,$ 
	\item \label{cond:quasi_invariance} $\mu(\Xi_n \Delta T(\Xi_n)) \to 0$, 
	\item \label{cond:partial_rigidity} $\sup_{x \in \Xi_n} d(x, T^{\h_n}x) \to 0$.
\end{enumerate}
In this case, we say that $(\Xi_n)_{n \in \N}$ is a sequence of \emph{$\delta$-partial rigidity sets} along the sequence $(\h_n)_{n \in \N}$ of rigidity times. 

Restricted to integer-valued skewing functions, {one can provide sufficient 
conditions for a given integer number to be an essential value of a skew 
product in terms of partial rigidity sequences and Birkhoff sums  (see 
\eqref{eq:Birkhoff_sums} below for the definition) as follows.}

\begin{proposition}[Essential values for integer-valued cocycles, Corollary 2.8 
in \cite{conze_cocycles_2011}] 
	\label{prop:intessentialvalue}
	Let $(T, X, \mathcal{B}, \mu)$ be an ergodic probability measure preserving 
	transformation of a compact metric space $X$, and let $\varphi: X \to \Z$ 
	be an integer-valued cocycle in $L^1(X, \mu)$ with zero mean. Suppose there 
	exist partial rigidity sets $(\Xi_n)_{n \in \N}$ and corresponding rigidity 
	times $(\mathfrak{h}_n)_{n \in \N}$ such that $S_{\mathfrak{h}_n} 
	\varphi|_{\Xi_n}=a$ (see \eqref{eq:Birkhoff_sums} below), for some 
	$a\in\Z$. Then $a \in Ess(T_\varphi)$. 
\end{proposition}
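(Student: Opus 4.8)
The plan is to deduce Proposition~\ref{prop:intessentialvalue} from the essential-values criterion by showing directly that, under the stated hypotheses, the integer $a$ satisfies the defining property of an essential value. First I would fix an arbitrary measurable set $A \subseteq X$ with $\mu(A) > 0$ and an $\epsilon > 0$; since $G = \Z$ here, the condition to verify is that for some $n \in \N$ one has
\[
\mu\left\{x \in A \,\left|\, T^n(x) \in A \text{ and } S_n\varphi(x) = a \right.\right\} > 0.
\]
The natural choice is $n = \mathfrak{h}_k$ for $k$ large. On the set $\Xi_k$ we already know $S_{\mathfrak{h}_k}\varphi \equiv a$ by hypothesis, so the only thing to arrange is that $A \cap \Xi_k \cap T^{-\mathfrak{h}_k}(A)$ has positive measure for some large $k$. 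The bulk of the argument is therefore a measure-theoretic estimate showing this intersection is nonempty (indeed of positive measure).

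The key steps, in order, are as follows. \emph{Step 1:} Using condition~(\ref{cond:towers_measure}), $\mu(\Xi_k) \to \delta > 0$, together with the fact that $\mu(A) > 0$, I would first note that $\mu(A \cap \Xi_k)$ need not stay bounded below in general; to fix this, one passes to a density argument. Concretely, one uses that $X$ is a compact metric space and $(\Xi_k)$ behaves, via conditions~(\ref{cond:quasi_invariance}) and~(\ref{cond:partial_rigidity}), like an approximate tower: condition~(\ref{cond:partial_rigidity}) says $\sup_{x \in \Xi_k} d(x, T^{\mathfrak{h}_k}x) \to 0$, so $T^{\mathfrak{h}_k}$ is uniformly close to the identity on $\Xi_k$. \emph{Step 2:} By the Lebesgue density theorem (or rather its analogue for the measure $\mu$ on the compact metric space $X$, using regularity of $\mu$), almost every point of $A$ is a density point of $A$; combined with $\mu(\Xi_k) \to \delta$ and a standard averaging/pigeonhole over the near-identity maps $T^{\mathfrak{h}_k}$, one concludes that for all sufficiently large $k$,
\[
\mu\left(A \cap \Xi_k \cap T^{-\mathfrak{h}_k}(A)\right) > 0.
\]
The mechanism is: $T^{-\mathfrak{h}_k}(A)$ is, near density points of $A$, essentially $A$ up to a set of small measure (because $T^{\mathfrak{h}_k}$ moves points a distance tending to $0$ on $\Xi_k$, so on $\Xi_k$ it maps density-point neighborhoods of $A$ back into $A$ up to negligible error), while $\mu(A \cap \Xi_k)$ is bounded below along a subsequence by a positivity argument using condition~(\ref{cond:quasi_invariance}) to control how $\Xi_k$ interacts with the $T$-dynamics. \emph{Step 3:} On this positive-measure intersection, $n = \mathfrak{h}_k$ witnesses $T^n(x) \in A$ (since $x \in T^{-\mathfrak{h}_k}(A)$) and $S_n\varphi(x) = a$ (since $x \in \Xi_k$ and $S_{\mathfrak{h}_k}\varphi|_{\Xi_k} = a$), which is exactly the defining condition for $a \in Ess(T_\varphi)$. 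Finally one invokes Theorem~\ref{thm: erg_criterion} implicitly only insofar as $Ess(T_\varphi)$ is well-defined; the conclusion $a \in Ess(T_\varphi)$ is immediate once Step~2 is done.

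The main obstacle is Step~2: producing a positive-measure intersection $A \cap \Xi_k \cap T^{-\mathfrak{h}_k}(A)$ without any assumption relating $A$ to the sets $\Xi_k$ beyond $\mu(A) > 0$. The subtlety is that $\Xi_k$ could, a priori, drift away from the support of $A$; this is precisely where conditions~(\ref{cond:quasi_invariance}) and~(\ref{cond:partial_rigidity}) are essential. Condition~(\ref{cond:quasi_invariance}), $\mu(\Xi_k \triangle T\Xi_k) \to 0$, forces $\Xi_k$ to be asymptotically $T$-invariant, so by ergodicity of $T$ it must asymptotically equidistribute relative to any fixed set — in particular $\mu(A \cap \Xi_k) \to \delta\,\mu(A) > 0$ along a subsequence (or at least $\liminf_k \mu(A \cap \Xi_k) > 0$), which is the first half of what Step~2 needs. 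Condition~(\ref{cond:partial_rigidity}) then supplies the second half: the uniform smallness of $d(x, T^{\mathfrak{h}_k}x)$ on $\Xi_k$ lets one replace $T^{-\mathfrak{h}_k}(A)$ by $A$ up to a set of measure $o(1)$ when intersecting with (density points inside) $\Xi_k$. I would carry out this last replacement carefully — it is the only genuinely delicate point — using outer regularity of $\mu$ to approximate $A$ from outside by an open set $U \supseteq A$ with $\mu(U \setminus A)$ small, observing $T^{\mathfrak{h}_k}(\{x \in \Xi_k : d(x,T^{\mathfrak{h}_k}x) < \eta\} \cap A) \subseteq U$ once $\eta$ is small relative to the "thickness" of $U$ around $A$, and then letting the parameters tend to zero in the right order. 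Combining the two lower bounds yields the positive-measure intersection and completes the proof. $\qed$
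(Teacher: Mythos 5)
Your overall skeleton is the right one (and is essentially the standard argument behind Corollary 2.8 of Conze--Fr\k{a}czek, which the paper quotes without proof): fix $A$ with $\mu(A)>0$, take $n=\mathfrak{h}_k$, and show $\mu\bigl(A\cap \Xi_k\cap T^{-\mathfrak{h}_k}(A)\bigr)>0$ for large $k$; the first half of your Step~2 is also sound, since conditions (i)--(ii) plus ergodicity do give $\mu(A\cap\Xi_k)\to\delta\mu(A)$ (any weak-$*$ limit of $\chi_{\Xi_k}$ is $T$-invariant, hence a.e.\ equal to $\delta$). The genuine gap is in the replacement step, i.e.\ the second half of Step~2. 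First, the Lebesgue density theorem you invoke is neither available in the stated generality (an arbitrary Borel probability measure on an arbitrary compact metric space need not satisfy a differentiation theorem) nor needed. Second, the mechanism you actually carry out --- choose an open $U\supseteq A$ with $\mu(U\setminus A)$ small and take $\eta$ ``small relative to the thickness of $U$ around $A$'' so that $T^{\mathfrak{h}_k}$ maps $\Xi_k\cap A$ into $U$ --- breaks down for a general measurable $A$: there need not exist any open $U\supseteq A$ with both $\mu(U\setminus A)$ small and $d(A,X\setminus U)>0$. For instance, if $A$ is open and dense with $\mu(A)=\tfrac12$, every $\eta$-neighbourhood of $A$ is all of $X$, so the only admissible $U$ is $X$ and $\mu(U\setminus A)=\tfrac12$; yet such $A$ must be handled, since the essential-value condition quantifies over all sets of positive measure.

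The fix is small and replaces the density/outer-regularity step by an inner-then-outer approximation. By inner regularity choose a compact $K\subseteq A$ with $\mu(K)\ge \tfrac12\mu(A)$, and by outer regularity an open $U\supseteq K$ with $\mu(U\setminus K)<\tfrac12\delta\mu(K)$; compactness gives $\eta:=d(K,X\setminus U)>0$. By condition (iii), for $k$ large $\sup_{x\in\Xi_k}d(x,T^{\mathfrak{h}_k}x)<\eta$, hence $T^{\mathfrak{h}_k}(K\cap\Xi_k)\subseteq U$, and since $T^{\mathfrak{h}_k}$ preserves $\mu$,
\begin{equation*}
\mu\bigl(K\cap\Xi_k\cap T^{-\mathfrak{h}_k}(A)\bigr)\;\ge\;\mu(K\cap\Xi_k)-\mu\bigl(T^{-\mathfrak{h}_k}(U\setminus A)\bigr)\;=\;\mu(K\cap\Xi_k)-\mu(U\setminus A)\;\xrightarrow[k\to\infty]{}\;\delta\mu(K)-\mu(U\setminus A)\;>\;0 .
\end{equation*}
Every point of this set lies in $A$, returns to $A$ under $T^{\mathfrak{h}_k}$, and satisfies $S_{\mathfrak{h}_k}\varphi=a$ because it lies in $\Xi_k$, which is exactly the defining condition for $a\in Ess(T_\varphi)$; your Step~3 then goes through verbatim. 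With this correction your argument is complete; note that no density points, and no relation between $A$ and the geometry of the sets $\Xi_k$ beyond the weak-$*$ equidistribution of $\chi_{\Xi_k}$, are ever needed.
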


	\section{Symmetric interval exchange transformations and involutions}
	\label{sec: symmetricinv}

Recall that, for any IET $T = (\pi,\lambda)$, its inverse $T^{-1}$ is also an IET on the same number of intervals and can be represented as $T^{-1} = (\overline{\pi}, \lambda)$, where
 \[ \overline \pi = (\overline \pi _0, \overline \pi_1) = (\pi_1, \pi_0).\]
That is, $T^{-1}$ exchanges the intervals $\{T(I_\alpha)\}_{\alpha \in \A}$ according to $\overline{\pi}.$ In particular, if $T$ is a symmetric IET, then $T^{-1}$ is also a symmetric IET. 

Let us denote by $\mathcal I$ the involution of the interval $[0, 1]$ given by
\[\Function{\mathcal I}{[0,1]}{[0,1]}{x}{1-x}.\]
 It is easy to check that if $T = (\pi,\lambda) \in \PermSpace \times \Simplex$ is a symmetric IET on $[0, 1]$, then 
 \begin{equation}\label{eq: IET_and_involution}
	T^{-n}\circ\mathcal I=\mathcal I\circ T^n, 
	\end{equation}
	for every $n \in \N$. 	Indeed, for $n = 1$, if $x \in I_\alpha$ then
		\[T(x) = x - \sum_{\pi_0(\beta)<\pi_0(\alpha) } \lambda_\beta + \sum_{\pi_1(\beta) < \pi_1(\alpha) }\lambda_\beta.\]
Notice that, since $T$ is symmetric, $1 - x \in T(I_\alpha) = \mathcal I (I_\alpha)$. Hence
\begin{align*}
T^{-1}(1 - x) & = 1 - x - \sum_{\overline \pi_0(\beta)< \overline \pi_0(\alpha) } \lambda_\beta + \sum_{\overline \pi_1(\beta) < \overline \pi_1(\alpha) }\lambda_\beta \\
& = 1 - x - \sum_{\pi_1(\beta)<\pi_1(\alpha) } \lambda_\beta + \sum_{\pi_0(\beta) < \pi_0(\alpha) }\lambda_\beta \\ &= 1 - T(x).
\end{align*}
Thus $T^{-1} \circ \mathcal I = \mathcal I \circ T$. For $n \geq 1$, property \eqref{eq: IET_and_involution} now follows easily by induction. 	Since $\mathcal I$ is an involution, i.e. $\mathcal I^2 = \id,$ it follows that $\mathcal I\circ T^{-1} =T\circ\mathcal I,$ and then, again by induction, property \eqref{eq: IET_and_involution} holds for every $n\le -1$.
	
For every $n\in\Z$, we denote the \emph{Birkhoff sums} of $\varphi: [0, 1) \to 
\R$ with respect to $T$ by 
\begin{equation}
\label{eq:Birkhoff_sums}
S_n \varphi(x):=\begin{cases}
\sum_{i=0}^{n-1}\varphi(T^{i}x)& \text{ if }n\ge 1,\\
0&\text{ if }n=0,\\
-\sum_{i=-n}^{-1}\varphi(T^{i}x)&\text{ if }n\le 1,
\end{cases}
\end{equation}
where, for the sake of simplicity, we omit the dependence on $T$ in the 
notation. 

The following assumption on cocycles will be crucial to obtain estimates on Birkhoff sums with respect to a symmetric IET (see Lemma \ref{lem: odd_values}). 
\begin{definition}
\label{def: odd_cocycle}
	We say that a function $\varphi:[0,1)\to\R$ is odd if for every $x\in(0,1)$ we have 
	\[
	\varphi(x) = -\varphi\circ \mathcal I(x).
	\]
\end{definition}
\noindent Note that the function $f=\chi_{(0,1/2)}-\chi_{(1/2,1)}$ is odd.

\begin{lemma}
\label{lem: odd_values}
	Assume that $T$ is a symmetric IET and $\varphi:[0,1)\to\R$ is odd. Then, for every $n\ge 0$, we have
	\[
	S_{n+1}\varphi(\tfrac{1}{2})=S_{-n}\varphi(\tfrac{1}{2}).
	\]

\end{lemma}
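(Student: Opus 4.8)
The plan is to use the conjugacy relation \eqref{eq: IET_and_involution} together with the oddness of $\varphi$ and the fact that $\tfrac12$ is the fixed point of $\mathcal I$. First I would write out $S_{n+1}\varphi(\tfrac12)=\sum_{i=0}^{n}\varphi(T^i\tfrac12)$ and try to identify this sum, term by term, with $S_{-n}\varphi(\tfrac12)=-\sum_{i=-(-n)}^{-1}\varphi(T^i\tfrac12)=-\sum_{i=1}^{n}\varphi(T^{i}\tfrac12)$. Wait — that shows the claim is \emph{not} just a reindexing at the level of individual terms, so the key is to relate $\varphi(T^i\tfrac12)$ to $\varphi(T^{j}\tfrac12)$ for negative $j$ via the involution.

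The crucial observation is that $\mathcal I(\tfrac12)=\tfrac12$, so applying \eqref{eq: IET_and_involution} at the point $\tfrac12$ gives $T^{-n}(\tfrac12)=T^{-n}(\mathcal I(\tfrac12))=\mathcal I(T^n(\tfrac12))$ for every $n\in\Z$. Hence, using oddness of $\varphi$ (valid on $(0,1)$, and one should note the orbit points $T^i\tfrac12$ lie in $(0,1)$ for the relevant range, or handle boundary cases separately), we get $\varphi(T^{-n}\tfrac12)=\varphi(\mathcal I(T^n\tfrac12))=-\varphi(T^n\tfrac12)$ for each $n$. The plan is then to substitute this into the Birkhoff sum: $S_{-n}\varphi(\tfrac12)=-\sum_{i=1}^{n}\varphi(T^{-i}\tfrac12)=-\sum_{i=1}^{n}\bigl(-\varphi(T^{i}\tfrac12)\bigr)=\sum_{i=1}^{n}\varphi(T^{i}\tfrac12)$.

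It remains to compare $\sum_{i=1}^n\varphi(T^i\tfrac12)$ with $S_{n+1}\varphi(\tfrac12)=\sum_{i=0}^{n}\varphi(T^i\tfrac12)$; these differ by $\varphi(\tfrac12)$. So I would separately check that $\varphi(\tfrac12)=0$: since $\varphi$ is odd and $\mathcal I(\tfrac12)=\tfrac12$, we have $\varphi(\tfrac12)=-\varphi(\tfrac12)$, hence $\varphi(\tfrac12)=0$ — but this requires $\tfrac12\in(0,1)$, which holds, so the oddness hypothesis applies directly at $\tfrac12$. With $\varphi(\tfrac12)=0$ the two sums agree and the identity $S_{n+1}\varphi(\tfrac12)=S_{-n}\varphi(\tfrac12)$ follows. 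A clean induction on $n$ is an alternative packaging: the base case $n=0$ reads $S_1\varphi(\tfrac12)=\varphi(\tfrac12)=0=S_0\varphi(\tfrac12)$, and the inductive step adds the term $\varphi(T^{n+1}\tfrac12)$ to the left and $-\varphi(T^{-(n+1)}\tfrac12)=\varphi(T^{n+1}\tfrac12)$ to the right.

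The main obstacle — really the only subtlety — is making sure the oddness relation $\varphi(x)=-\varphi(\mathcal I x)$ can be applied at the precise points $T^i(\tfrac12)$, i.e. that these points avoid $\{0\}$ where oddness is not assumed (and that the involution identity \eqref{eq: IET_and_involution}, stated for $T$ on $[0,1]$, is applied consistently with $T$ viewed on $[0,1)$). For a symmetric IET with $\tfrac12$ not a discontinuity point this is automatic; in the degenerate case where $\tfrac12$ is an endpoint of an exchanged interval the statement still holds by the coboundary remark but one should either exclude it or note that $f$ (our cocycle of interest) is handled by the earlier discussion. I would phrase the proof as the short induction above, remarking that $\varphi(\tfrac12)=0$ by oddness and that \eqref{eq: IET_and_involution} evaluated at the $\mathcal I$-fixed point $\tfrac12$ gives $T^{-m}(\tfrac12)=\mathcal I(T^m(\tfrac12))$ for all $m$.
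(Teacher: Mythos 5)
Your proof is correct and follows essentially the same route as the paper's: both evaluate the conjugacy relation \eqref{eq: IET_and_involution} at the $\mathcal I$-fixed point $\tfrac{1}{2}$, use oddness to flip the sign of $\varphi$ along the backward orbit, and absorb the extra term via $\varphi(\tfrac{1}{2})=0$. The only difference is cosmetic bookkeeping in how the two Birkhoff sums are reindexed.
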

\begin{proof}
	Let $n\ge 0$. Since $\varphi$ is odd, we have $\varphi(\tfrac{1}{2})=0$. Moreover, $\mathcal I(\tfrac{1}{2})=\tfrac{1}{2}$. Thus,
	\[
	S_{n+1}\varphi(\tfrac{1}{2})=\sum_{i=0}^{n}\varphi\big(T^{i}(\tfrac{1}{2})\big)=\sum_{i=1}^{n} \varphi \big(T^{i}(\tfrac{1}{2})\big)=\sum_{i=1}^{n} \varphi \big(T^{i}\circ \mathcal I(\tfrac{1}{2})\big).
	\]
	Then, by \eqref{eq: IET_and_involution}, we get
	\[
	\sum_{i=1}^{n}\varphi\big(T^{i}\circ \mathcal I(\tfrac{1}{2})\big)=\sum_{i=-n}^{-1}\varphi\big(\mathcal I\circ T^i(\tfrac{1}{2})\big)=-\sum_{i=-n}^{-1}\varphi(T^i(\tfrac{1}{2}))=S_{-n}\varphi(\tfrac{1}{2}),
	\]
	which concludes the proof.
	
\end{proof}
	\begin{corollary}\label{cor: cancellations}
		Assume that $T$ is a symmetric IET and $\varphi: [0,1)\to\R$ is odd. Then, for every $n\ge 0$, we have
\[
S_{2n}\varphi(T^{-n}(\tfrac{1}{2}))= - \varphi(T^{n}(\tfrac{1}{2})).
\]

	\end{corollary}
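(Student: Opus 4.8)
The plan is to derive Corollary \ref{cor: cancellations} directly from Lemma \ref{lem: odd_values} by a simple cocycle-identity manipulation. First I would use the cocycle relation for Birkhoff sums, namely $S_{a+b}\varphi(x) = S_a\varphi(x) + S_b\varphi(T^a x)$, valid for all integers $a, b$ (this is immediate from the definition \eqref{eq:Birkhoff_sums}, handling the sign cases). Applying this with $x = T^{-n}(\tfrac{1}{2})$ and splitting the interval $[0, 2n)$ of indices at $n$, I would write
\[
S_{2n}\varphi\big(T^{-n}(\tfrac{1}{2})\big) = S_{n}\varphi\big(T^{-n}(\tfrac{1}{2})\big) + S_{n}\varphi\big(\tfrac{1}{2}\big).
\]
The second summand is $S_n\varphi(\tfrac{1}{2})$ evaluated at the fixed point $\tfrac12$ of $\mathcal I$, which is exactly the kind of object Lemma \ref{lem: odd_values} addresses.

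Next I would rewrite the first summand as a forward Birkhoff sum based at $\tfrac12$. By the cocycle relation again, $S_{-n}\varphi(\tfrac12) = S_{-n}\varphi\big(T^n(T^{-n}(\tfrac12))\big)$, and comparing with $S_n\varphi(T^{-n}(\tfrac12))$ via $0 = S_0\varphi(T^{-n}\tfrac12) = S_n\varphi(T^{-n}\tfrac12) + S_{-n}\varphi(\tfrac12)$ gives
\[
S_{n}\varphi\big(T^{-n}(\tfrac{1}{2})\big) = -S_{-n}\varphi\big(\tfrac{1}{2}\big).
\]
Substituting, I obtain $S_{2n}\varphi(T^{-n}(\tfrac12)) = S_n\varphi(\tfrac12) - S_{-n}\varphi(\tfrac12)$. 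Now I invoke Lemma \ref{lem: odd_values}, which gives $S_{n+1}\varphi(\tfrac12) = S_{-n}\varphi(\tfrac12)$; since $S_{n+1}\varphi(\tfrac12) = S_n\varphi(\tfrac12) + \varphi(T^n(\tfrac12))$, this yields $S_{-n}\varphi(\tfrac12) - S_n\varphi(\tfrac12) = \varphi(T^n(\tfrac12))$, hence $S_n\varphi(\tfrac12) - S_{-n}\varphi(\tfrac12) = -\varphi(T^n(\tfrac12))$, which is precisely the claimed identity.

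There is essentially no obstacle here; the only thing to be careful about is the bookkeeping of signs and the case distinctions in the definition \eqref{eq:Birkhoff_sums} of $S_n$ for negative $n$, and making sure the cocycle relation $S_{a+b}\varphi(x) = S_a\varphi(x) + S_b\varphi(T^a x)$ is applied with correct base points (in particular the harmless identity $S_n\varphi(T^{-n}x) + S_{-n}\varphi(x) = 0$, which just says that going forward $n$ steps and then back $n$ steps contributes nothing). One could alternatively present the argument purely at the level of the two sums $\sum_{i=0}^{n-1}\varphi(T^{i-n}\tfrac12)$ and $\sum_{i=n}^{2n-1}\varphi(T^{i-n}\tfrac12)$ and re-index the first as $-\sum_{i=-n}^{-1}\varphi(T^i\tfrac12) = S_{-n}\varphi(\tfrac12)$, but the cocycle formulation keeps the write-up shortest. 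The whole proof should be three or four lines.
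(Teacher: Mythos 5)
Your proposal is correct and follows essentially the same route as the paper: the paper likewise combines the cocycle identities $S_{2n}\varphi(x)=S_n\varphi(x)+S_n\varphi(T^n x)$ and $S_{-n}\varphi(x)=-S_n\varphi(T^{-n}x)$ to get $S_{2n}\varphi(T^{-n}(\tfrac12))=S_n\varphi(\tfrac12)-S_{-n}\varphi(\tfrac12)$, and then applies Lemma \ref{lem: odd_values} to replace $S_{-n}\varphi(\tfrac12)$ by $S_{n+1}\varphi(\tfrac12)$, yielding $-\varphi(T^n(\tfrac12))$. Your sign bookkeeping and base-point choices are all consistent with the definition \eqref{eq:Birkhoff_sums}, so there is nothing to fix.
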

\begin{proof}
This follows from Lemma \ref{lem: odd_values}. Indeed, note that
\[
S_{-n}\varphi(x)=-S_{n}\varphi(T^{-n}(x)), \qquad 
S_{2n}\varphi(x)=S_{n}\varphi(x)+S_n\varphi(T^n(x))
\]
and thus
\[
S_{2n}\varphi(T^{-n}(x))=S_{n}\varphi(x)-S_{-n}\varphi(x)
\]
for any $x \in [0, 1)$ and any $n \in \Z$. Hence, by Lemma \ref{lem: 
odd_values}, we get
\[
S_{2n}\varphi(T^{-n}(\tfrac{1}{2}))=S_{n}\varphi(\tfrac{1}{2})-S_{n+1}\varphi(\tfrac{1}{2})
=-\varphi(T^n (\tfrac{1}{2})).
\]
\end{proof}

\section{Rauzy-Veech induction and the position of centers}
\label{sc:centers}
Given an IET $T$ exchanging the intervals $\{I_\alpha\}_{\alpha \in \A}$, we denote by $c_{\alpha}\in I_\alpha$ the center of the interval $I_{\alpha}$, for every $\alpha\in\A$. Analogously, for every $n\in\N$, we denote by $c_{\alpha}^n$ the center of the interval $I_{\alpha}^n$, for every $\alpha \in \A$. For any $x\in I$ and any $n\in\N$, we denote by $p_n(x)$ the \emph{projection} of $x$ on $I^n$, that is, $p_n(x)=T^{-l}(x)$, where $l\ge 0$ is the smallest natural number such that $T^{-l}(x)\in I^n$.

{The following lemma gives crucial information about the positions of centers 
of intervals, {as well as $\tfrac{1}{2}$,} inside the Rauzy-Veech towers.
\begin{lemma}\label{lem: positions_of_centers}
For almost every symmetric IET $T=(\pi,\lambda)$ and for every subset 
$A\subseteq \Simplex$ of positive Lebesgue measure, there exists a finite 
positive Rauzy path $\gamma$, and an increasing sequence $\{n_k\}_{k \in \N} 
\subseteq \N$ such that, for any $k \in \N$, $T_{n_k}$ is symmetric, 
$\frac{\lambda^{n_k}}{|\lambda^{n_k}|}\in A$, {the Rauzy path associated to the 
last $|\gamma|$ Rauzy-Veech induction steps before $T_{n_k}$ is given by 
$\gamma$}, and 
\begin{equation*}\label{eq: centerpositions}
\{{p_{n_k}(c_\alpha) } \mid \alpha\in\mathcal 
A\}\cup\{p_{n_k}(\tfrac{1}{2})\}=\{c^{n_k}_{\alpha} \mid 
\alpha\in\A\}\cup\{{|I^{n_k}|/2}\}.
\end{equation*}
\end{lemma}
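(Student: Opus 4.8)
The plan is to extract the sequence $\{n_k\}$ from the recurrence of the Rauzy--Veech induction, while importing the set equality in the statement directly from \cite{berk_ergodicity_2023-1}. Fix a symmetric permutation $\pi$ on $\A$, let $\mathfrak R\subseteq\PermSpace$ be its Rauzy class, and fix a set $A\subseteq\Simplex$ of positive Lebesgue measure. I would first collect the two inputs. By Theorem \ref{thm: RVergodic} and the conservativity of Rauzy--Veech induction (see \cite{viana_ergodic_2006}), the normalized induction $\tilde R$ is conservative and ergodic on $\mathfrak R\times\Simplex$ for a $\sigma$-finite measure $m$ equivalent to the product of counting and Lebesgue measures; consequently, for every Borel $Y\subseteq\mathfrak R\times\Simplex$ with $m(Y)>0$, the $\tilde R$-orbit of $m$-a.e.\ point meets $Y$ infinitely often. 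The input from \cite{berk_ergodicity_2023-1} is that there is a full-measure set $\mathcal G$ of symmetric IETs such that, for $T\in\mathcal G$, the equality $\{p_n(c_\alpha)\mid\alpha\in\A\}\cup\{p_n(\tfrac{1}{2})\}=\{c^n_\alpha\mid\alpha\in\A\}\cup\{|I^n|/2\}$ holds for every $n$ with $\pi^n$ symmetric; if instead the cited result also requires the last few Rauzy--Veech steps before time $n$ to trace a fixed positive path, I would simply make sure the path $\gamma$ produced below ends with it, which affects nothing.

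The crux is to build a positive Rauzy loop $\gamma$ at $\pi$ adapted to $A$. I would work with the first-return map $S$ of $\tilde R$ to $\{\pi\}\times\Simplex$, which, being induced by a conservative ergodic transformation on a set of positive measure, is again conservative and ergodic (see \cite{aaronson_introduction_1997}). Its excursions are exactly the \emph{prime} Rauzy loops at $\pi$, of which there are countably many, so for each $\ell\ge1$ one has (mod $m$-null sets) a partition of $\{\pi\}\times\Simplex$ into cylinders $C_w=\{\pi\}\times\Delta_{\gamma_w}$, where $w$ runs over valid length-$\ell$ words of prime loops, $\gamma_w$ is the corresponding concatenation, and $\Delta_{\gamma_w}\subseteq\Simplex$ is the set of normalized lengths whose Rauzy orbit begins with $\gamma_w$. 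Since a positive Rauzy matrix stays positive upon multiplication by further Rauzy matrices, and since by \cite[Lemma 1.2.4]{marmi_cohomological_2005} the Rauzy orbit of a.e.\ IET is eventually positive, for a.e.\ symmetric $T$ there is $\ell_0(T)$ with $T\in G_\ell:=\bigsqcup\{C_w:\gamma_w\text{ positive}\}$ for all $\ell\ge\ell_0(T)$. Because, by conservativity and ergodicity of $S$, the $S$-orbit of a.e.\ symmetric $T$ enters $\{\pi\}\times A$ at arbitrarily large times, a.e.\ symmetric $T$ lies in $G_\ell\cap S^{-\ell}(\{\pi\}\times A)$ for some $\ell$; hence $\bigcup_{\ell\ge1}\big(G_\ell\cap S^{-\ell}(\{\pi\}\times A)\big)$ is conull, so $m\big(G_{\ell^\ast}\cap S^{-\ell^\ast}(\{\pi\}\times A)\big)>0$ for some $\ell^\ast$, and therefore $m\big(C_{w^\ast}\cap S^{-\ell^\ast}(\{\pi\}\times A)\big)>0$ for some word $w^\ast$. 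I would then take $\gamma:=\gamma_{w^\ast}$; it is a finite positive Rauzy path and a loop at $\pi$. On $C_{w^\ast}=\{\pi\}\times\Delta_\gamma$ the map $S^{\ell^\ast}$ equals $\tilde R^{|\gamma|}$ and is injective, so pushing the last set forward by $\tilde R^{|\gamma|}$ shows that the set $E_\gamma\subseteq\Simplex$ of normalized lengths reachable from $\{\pi\}\times\Delta_\gamma$ along $\gamma$ has positive-measure intersection with $A$.

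To finish, set $W:=(\{\pi\}\times\Delta_\gamma)\cap(\tilde R^{|\gamma|})^{-1}(\{\pi\}\times A)$, which has positive $m$-measure by the previous step. For $T$ in $\mathcal G$ and in the relevant conull sets, the $\tilde R$-orbit of $T$ visits $W$ at infinitely many times $m_1<m_2<\cdots$, and I would set $n_k:=m_k+|\gamma|$. Then $\tilde R^{m_k}(T)\in\{\pi\}\times\Delta_\gamma$ forces the Rauzy--Veech steps from time $m_k$ to $n_k$ to trace $\gamma$, so the last $|\gamma|$ steps before $T_{n_k}$ are $\gamma$ and $\pi^{n_k}=\pi$; also $\tilde R^{n_k}(T)\in\{\pi\}\times A$, i.e.\ $\lambda^{n_k}/|\lambda^{n_k}|\in A$; and since $\pi^{n_k}$ is symmetric and $T\in\mathcal G$, the center--midpoint equality holds at $n_k$. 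I expect the main obstacle to be the middle step: producing a positive Rauzy loop whose reachable set $E_\gamma$ still meets the arbitrary prescribed set $A$ in positive measure forces one to combine the eventual positivity of typical Rauzy orbits with the recurrence of the first-return map $S$, whereas the remaining ingredients are either the center-location result of \cite{berk_ergodicity_2023-1} or standard recurrence of $\tilde R$.
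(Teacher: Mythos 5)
There is a genuine gap, and it sits exactly at the point you outsource. Your imported ``input from \cite{berk_ergodicity_2023-1}'' --- that for a.e.\ symmetric IET the equality $\{p_n(c_\alpha)\mid\alpha\in\A\}\cup\{p_n(\tfrac12)\}=\{c^n_\alpha\mid\alpha\in\A\}\cup\{|I^n|/2\}$ holds at \emph{every} time $n$ with $\pi^n$ symmetric --- is not what the cited Lemma~3.12 says, and it is in fact false. Already for $d=2$ (where every permutation is symmetric) take the rotation $T(x)=x+\alpha \bmod 1$ with $\alpha<\tfrac12$: after one step of Rauzy--Veech induction $I^1=[0,1-\alpha)$, the centers of the exchanged intervals of $T_1$ are $\tfrac12-\alpha$ and $1-\tfrac{3\alpha}{2}$, the middle point of $I^1$ is $\tfrac{1-\alpha}{2}$, while $p_1(\tfrac12)=\tfrac12$ (since $\tfrac12\in I^1$), which for generic $\alpha$ is none of these three points; so the set equality fails at $n=1$ even though it holds trivially at $n=0$. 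Hence the positional equality is a property of carefully chosen times, not of all symmetric times, and it is precisely the hard content of the lemma. Your hedge (``if the cited result also requires the last few steps to trace a fixed positive path, I make $\gamma$ end with it'') does not repair this: the projections $p_{n}$ depend on the whole tower structure up to time $n$, i.e.\ on the entire Rauzy--Veech history and the location of $\tfrac12$ inside the towers, not only on the terminal window of the path; and Lemma~3.12 of \cite{berk_ergodicity_2023-1} is an existence statement for a sequence constructed inside its proof, not a criterion identifying all times at which the equality holds, so you cannot simply intersect its conclusion with your recurrence times to $\{\pi\}\times A$.

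The recurrence/positivity scaffolding you build (inducing $\tilde R$ on $\{\pi\}\times\Simplex$, eventual positivity via \cite[Lemma 1.2.4]{marmi_cohomological_2005}, conservativity and ergodicity from Theorem \ref{thm: RVergodic}) is fine and does deliver times $n_k$ with $\pi^{n_k}=\pi$ symmetric, $\lambda^{n_k}/|\lambda^{n_k}|\in A$, and terminal path equal to a fixed positive loop $\gamma$; but these are the easy conclusions. The paper's route is different in the decisive respect: it does not quote the \emph{statement} of Lemma~3.12 as a black box, it reruns its \emph{proof} with the specific set $U$ there replaced by the arbitrary positive-measure set $A$, because in that proof the choice of the good times and the verification of the center-position equality are established simultaneously. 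To make your argument complete you would have to supply that positional argument at your constructed times (tracking how the special points $c_\alpha$ and $\tfrac12$ project into the Rauzy--Veech towers along the induction), which is the part your proposal currently contains no proof of.
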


{The above result follows from \cite[Lemma 3.12]{berk_ergodicity_2023-1}. To 
obtain Lemma \ref{lem: positions_of_centers}, it is enough to replace the set 
$U$ by the set $A$ ({from the assumptions of} Lemma \ref{lem: 
positions_of_centers}) in the proof of Lemma 3.12 in 
\cite{berk_ergodicity_2023-1}.} }

Lemma \ref{lem: positions_of_centers} is one of the crucial steps in locating $\tfrac{1}{2}$ in the Rauzy-Veech towers. More precisely, for any $k \geq 1$, this lemma shows that $\tfrac{1}{2}$ is in the forward orbit of either the middle-point of the domain $I^{n_k}$ or the middle-point of one of the intervals exchanged by $T_{n_k}$. Moreover, in the latter case, it is clear that $\tfrac{1}{2}$ is the middle point of the level of the Rauzy-Veech tower to which it belongs. To obtain a similar control in the case where $p_{n_k}(\tfrac{1}{2})=\frac{1}{2}|I^{n_k}|$ it is sufficient to assume that $\frac{1}{2}|I^{n_k}|$ is sufficiently far from any of the discontinuities of $T_{n_k}$. 

{As we shall see in the next section, under the assumptions of Theorem 
\ref{thm: ergodicity1} and after suitably choosing a set $A \subseteq 
\Simplex$, appropriate control of the location of $\tfrac{1}{2}$ (see Claim 
\ref{claim:pos_1/2}) in the Rauzy-Veech towers associated to the 
renormalization times given by Lemma \ref{lem: positions_of_centers}, together 
with Corollary \ref{cor: cancellations}, will allow us to construct rigidity 
sets as in Proposition \ref{prop:intessentialvalue}.

}

\section{Proof of Theorem \ref{thm: ergodicity1}}\label{sec: char}
In this section, we prove our main result by showing that $-1$ is an essential 
value of $T_f$. {Recall that by Theorems \ref{thm: erg_criterion} and 
Proposition \ref{prop:intessentialvalue} this is enough to guarantee ergodicity 
of the skew-product. }{For this purpose, we use the jump discontinuity of $f$ 
to construct rigidity sets as in Proposition \ref{prop:intessentialvalue}. 
Indeed, {using Corollary \ref{cor: cancellations} and Lemma \ref{lem: 
positions_of_centers} (after properly choosing the set $A$), we will prove that 
for a.e. symmetric IET $T$, there exists an increasing sequence 
$(\mathfrak{h}_k)_{k \in \N}$ such that $S_{\mathfrak{h}_k}f(\tfrac{1}{2}) = 
0$. Hence, for $k \geq 1$ fixed and assuming that $\tfrac{1}{2}$ does not lie 
in the orbit of any of the discontinuities of $T$, if $x$ is sufficiently close 
to $\tfrac{1}{2}$ from the left (resp. right) we have $S_{\mathfrak{h}_k}f(x) = 
1$ (resp. $S_{\mathfrak{h}_k}f(x) = -1$). Using Lemma \ref{lem: 
positions_of_centers} to locate $\tfrac{1}{2}$, we will show that we can extend 
these estimates, say $S_{\mathfrak{h}_k}f(x) = -1$, to a partial rigidity 
sequence of sets $(\Xi_k)_{k \in \N}$ with rigidity times given by 
$(\mathfrak{h}_k)_{k \in \N}$.}} 

{The sequence $\mathfrak{h}_k$ described above will be chosen as the sum of a 
finite number of heights of Rohlin towers at the $n_k$ step of Rauzy-Veech 
induction, where $(n_k)_{k \in \N}$ is the sequence given by Lemma \ref{lem: 
positions_of_centers}. Moreover, as we shall see in the proof, to prove the 
properties stated above, it will be crucial that the points 
$T^{-\mathfrak{h}_k}(\tfrac{1}{2}), T^{\mathfrak{h}_k}(\tfrac{1}{2})$ belong to 
the same floor and same Rohlin tower as $\tfrac{1}{2}$, and that these three 
points remain relatively close to the middle point of the floor to which they 
belong. As we shall explain below, since $T$ is a symmetric IET, this can be 
easily achieved if the exchanged intervals have {comparable} length. 


 
 }
{With this in mind, we define $A$ as}
\begin{equation}
\label{eq:def_A_odd} 
A=\left\{\lambda\in\Simplex\ \left|\ 
\tfrac{1}{{d}}-\tfrac{1}{100{d}^3}<\lambda_{\alpha}<\tfrac{1}{{d}}+\tfrac{1}{100{d}^3},
 \quad\forall \alpha\in \A\right\}\right.,
\end{equation}
if ${d}$ is odd, or
\begin{equation}
\label{eq:def_A_even} 
A = \left\{ \lambda\in\Simplex \,\left|\, \begin{array}{l} 
\frac{1}{{d} - 
1}-\frac{1}{100{d}^3}<\lambda_{\alpha}<\frac{1}{{d} - 
1}+\frac{1}{100{d}^3},\quad \forall \alpha \in \A 
\setminus\{\pi_0^{-1}(1), \pi_0^{-1}(2)\}; \\ 
\frac{3}{7}(\frac{1}{{d} - 
1}-\frac{1}{100{d}^3})<\lambda_{\pi_0^{-1}(1)}<\frac{3}{7}(\frac{1}{{d}
 - 1}+\frac{1}{100{d}^3}); \\ \frac{4}{7}(\frac{1}{{d} - 
1}-\frac{1}{100{d}^3})<\lambda_{\pi_0^{-1}(2)}<\frac{4}{7}(\frac{1}{{d}
 - 1}+\frac{1}{100{d}^3}).
\end{array}\right\}\right.,
\end{equation}
if ${d}$ is even. The reason why we differentiate between these two cases is 
that Lemma \ref{lem: positions_of_centers} does not specify precisely the 
position of $\tfrac{1}{2}$ in the Rauzy-Veech towers but provides at most $d + 
1$ different possibilities. {Hence, it {is} possible that along the subsequence 
$(n_k)_{k \in \N}$ given by Lemma \ref{lem: positions_of_centers}, 
$\tfrac{1}{2}$ always lies in the forward orbit of the middle point of 
$I^{n_k}$.} If we picked the set $A$ in the even case exactly as we do in the 
odd case, then, in the previous scenario, $\tfrac{1}{2}$ would be close to a 
discontinuity of $T_{n_k}$ and thus close to the boundary of the floor of the 
Rohlin tower to which it belongs. {As mentioned before, we want to avoid this 
from happening to extend the estimate on Birkhoff sums starting at 
$\tfrac{1}{2}$ to large partial rigidity sequences. 
}
We fix this issue by imposing a slightly less balanced condition in the lengths of the set $A$ in the even case. The constants $\frac{3}{7}$ and $\frac{4}{7}$ are chosen so that sufficiently long orbits of the centers of exchanged intervals do not come too close to the discontinuities of $T_{n_k}$ either. Apart from this slight technical difference, the proofs are essentially the same in both cases.	

{Let us further motivate the definition of this set by considering the 
particular case where $\lambda_\alpha = \tfrac{1}{d}$ for all $\alpha \in \A$, 
if $d$ is odd, and $\lambda_{\pi_0^{-1}(1)} = \tfrac{3}{7(d - 1)}$, 
$\lambda_{\pi_0^{-1}(2)} = \tfrac{4}{7(d - 1)}$, $\lambda_\alpha = \tfrac{1}{d 
- 1}$, for $\alpha \in \A \setminus\{\pi_0^{-1}(1), \pi_0^{-1}(2)\}$, if $d$ is 
even. Notice that the set $A$ is nothing more than an open set consisting of 
small perturbations of these parameters. Let us consider a symmetric IET $T_0$ 
with lengths given by $\lambda$. 
	In this case, $\tfrac{1}{2}$ is a fixed point of $T_0$ and coincides with 
	$c_{\beta}$, where $\beta = \pi_0^{-1}(\lfloor\tfrac{d + 1}{2}\rfloor)$. If 
	$d$ is odd, the centers $c_\alpha$, for $\alpha \in \A$, are 2-periodic 
	points for $T_0$. On the other hand, if $d$ is even, $c_{\pi_0^{-1}(1)}$, 
	$c_{\pi_0^{-1}(2)}$ and $c_{{\pi_0^{-1}(d)}}$ are no longer 2-periodic 
	points while the other centers remain 2-periodic. 
	However, for $\alpha \in\{ \pi_0^{-1}(1), \pi_0^{-1}(2),{\pi_0^{-1}(d)}\}$, 
	the iterates $T_0^4(c_\alpha)$ and $T_0^{-4}(c_\alpha)$ are at a distance $\tfrac{1}{7(d - 1)}$ 
	of $c_\alpha$, and thus belong to $I_\alpha$ and {stay far} from its 
	boundary. 
	Therefore, given an IET $T$ as in Lemma \ref{lem: positions_of_centers} 
	such that its $n_k$-th Rauzy-Veech renormalization $T_{n_k}$ is close to 
	$T_0$, if we define $\mathfrak{h}_k$ as the sum of either two or four 
	heights of the associated Rohlin towers (depending on the tower to which 
	$\tfrac{1}{2}$ belongs), we can guarantee, by Lemma \ref{lem: 
	positions_of_centers}, that $T^{-\mathfrak{h}_k}(\tfrac{1}{2})$ and 
	$T^{\mathfrak{h}_k}(\tfrac{1}{2})$ {are on }the same floor and same Rohlin 
	tower as $\tfrac{1}{2}$ and that they are all relatively close to the 
	middle of the floor. It is worth adding that in the even case, the points $T^{-4}_0\big(c_{\pi_0^{-1}(d)}\big)$ and $c_{\pi_0^{-1}(d)}$ travel through the same exchanged intervals under first four iterates of $T_0$, although not in the same order. However, the fact that the total number of each of the intervals visited is the same, as in this case, suffices to compare the orbits of those two points.


}

 Define $A$ as in \eqref{eq:def_A_odd} or \eqref{eq:def_A_even}, depending on whether $d$ is odd or even. Let $(n_k)_{k \in \N}$ and $\gamma$ be given by Lemma \ref{lem: positions_of_centers}. We will use Proposition \ref{prop:intessentialvalue} to show that $-1$ is an essential value of $T_f$, which implies, by Theorem \ref{thm: erg_criterion}, that $T_f$ is ergodic. 
To apply Proposition \ref{prop:intessentialvalue}, we need to define appropriate partial rigidity sets together with rigidity times. For this, it will be crucial to control the position of $\tfrac{1}{2}$ in the Rauzy-Veech towers associated with the sequence of renormalizations given by Lemma \ref{lem: positions_of_centers} and to find rigidity times for appropriate subsets of these towers. Indeed, finding appropriate sub-towers whose iterates, up to a certain rigidity time, avoid $\tfrac{1}{2}$ will allow us to construct rigidity sets on which the Birkhoff sums at the rigidity times are constant.

 In view of Lemma \ref{lem: positions_of_centers}, there exists $\alpha\in\mathcal A$ such that, passing to a subsequence if necessary, $\tfrac{1}{2} \in T^{\ell_k}(I^{n_k}_{\alpha})$ for some $0\le\ell_k< h^{n_k}_{\alpha},$ for every $k\in\N$. 
 In the following, we denote $I_\alpha^n = [l_\alpha^n, r_\alpha^n).$ Recall that we denote by $c^n_\alpha = \tfrac{1}{2}(l^n_\alpha + r^n_\alpha)$ the middle point of the interval $I^n_\alpha$, for every $n \in \N$. 
 
 We will first prove Theorem \ref{thm: ergodicity1} under the assumption that 
 ${d}$ is odd, {which we refer to as Case A. The proof in the even case, which 
 we refer to as Case B, follows a similar approach and will be outlined towards 
 the end of the proof.

 }
 \medskip
\noindent \textbf{Case A: ${d}$ is odd.} We illustrate this situation in Figure 
\ref{fig:odd}. {Let us first investigate the relations between lengths of 
intervals obtained via Rauzy-Veech induction. 
\stepcounter{claimcounter}\begin{claima}\label{claim:pos_1/2}
For every $k\in\N$, we have
	\begin{equation*}
		\max_{\beta,\beta'\in\mathcal 
		A}\left|\lambda_{\beta}^{n_k}-\lambda_{\beta'}^{n_k}\right|<\frac{\lambda_{\alpha}^{n_k}}{{40}{d}^2}.
	\end{equation*}
\end{claima}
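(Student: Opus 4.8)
The plan is to read off the claim directly from the one piece of quantitative information we have about $T_{n_k}$, namely that, by Lemma \ref{lem: positions_of_centers}, the normalized length vector $\lambda^{n_k}/|\lambda^{n_k}|$ belongs to the set $A$ defined in \eqref{eq:def_A_odd}. Fix $k\in\N$. By the choice of the sequence $(n_k)_{k\in\N}$, every coordinate of $\lambda^{n_k}/|\lambda^{n_k}|$ lies in the interval $\big(\tfrac1d-\tfrac{1}{100d^3},\tfrac1d+\tfrac{1}{100d^3}\big)$, so for any $\beta,\beta'\in\A$ we have
\[
\left|\frac{\lambda_\beta^{n_k}}{|\lambda^{n_k}|}-\frac{\lambda_{\beta'}^{n_k}}{|\lambda^{n_k}|}\right|<\frac{2}{100d^3}=\frac{1}{50d^3},
\]
and hence, multiplying through by $|\lambda^{n_k}|>0$,
\[
\max_{\beta,\beta'\in\A}\left|\lambda_\beta^{n_k}-\lambda_{\beta'}^{n_k}\right|<\frac{|\lambda^{n_k}|}{50d^3}.
\]

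Next I would bound $|\lambda^{n_k}|$ from above in terms of $\lambda_\alpha^{n_k}$. Since $\lambda_\alpha^{n_k}/|\lambda^{n_k}|>\tfrac1d-\tfrac{1}{100d^3}=\tfrac{100d^2-1}{100d^3}$, we get $|\lambda^{n_k}|<\tfrac{100d^3}{100d^2-1}\,\lambda_\alpha^{n_k}$. Substituting this into the previous display yields
\[
\max_{\beta,\beta'\in\A}\left|\lambda_\beta^{n_k}-\lambda_{\beta'}^{n_k}\right|<\frac{1}{50d^3}\cdot\frac{100d^3}{100d^2-1}\,\lambda_\alpha^{n_k}=\frac{2\,\lambda_\alpha^{n_k}}{100d^2-1}.
\]
Finally, the elementary inequality $1<20d^2$, valid for every $d\ge 1$, rearranges to $\tfrac{2}{100d^2-1}<\tfrac{1}{40d^2}$, which gives the claimed estimate $\max_{\beta,\beta'\in\A}|\lambda_\beta^{n_k}-\lambda_{\beta'}^{n_k}|<\lambda_\alpha^{n_k}/(40d^2)$.

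There is no genuine obstacle here: the statement is a direct quantitative consequence of the near-uniformity of the lengths imposed through the set $A$ together with Lemma \ref{lem: positions_of_centers}, and the only thing to verify is that the numerical constants line up, which they do with room to spare. (The analogue of this claim in Case B would instead start from \eqref{eq:def_A_even} and requires slightly more bookkeeping because of the two exceptional coordinates $\pi_0^{-1}(1),\pi_0^{-1}(2)$, but the same normalize-then-rescale argument applies.)
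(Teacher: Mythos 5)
Your proof is correct and takes essentially the same route as the paper: the estimate is read off directly from the definition of the set $A$ in \eqref{eq:def_A_odd} via Lemma \ref{lem: positions_of_centers}, followed by an elementary check of the constants. The only minor difference is that you track the scaling factor $|\lambda^{n_k}|$ explicitly and then rescale, whereas the paper argues in normalized coordinates (the claimed inequality being homogeneous in $\lambda^{n_k}$), which amounts to the same computation.
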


		\begin{proof}[Proof of Claim \ref{claim:pos_1/2}]
	By the definition of $A$, we obtain
	\[
	\max_{\beta,\beta'\in\mathcal 
	A}\left|\lambda_{\beta}^{n_k}-\lambda_{\beta'}^{n_k}\right|<\frac{1}{50{d}^3}<\frac{100{d}\lambda_{\alpha}^{n_k}}{99}\cdot\frac{1}{50{d}^3}<\frac{\lambda_{\alpha}^{n_k}}{40{d}^2},
	\]
	since
	\[
	\frac{100{d}\lambda_{\alpha}^{n_k}}{99}>\frac{100d}{99}\left(\frac{1}{d}-\frac{1}{100d^3}\right)=\frac{100-\tfrac{1}{d^2}}{99}>1.
	\]
	\end{proof}

Now we locate $\tfrac{1}{2}$ in the Rauzy-Veech towers.}
\stepcounter{claimcounter}\begin{claima}\label{cl: 1/2position}
\label{cl:1/2}
For any $k \in \N$, 
	\begin{equation}\label{eq: 1/2position}
	\frac{1}{2}\in \left(T^{\ell_k}(c_\alpha^{n_k})-\frac{\lambda_\alpha^{n_k}}{10}, T^{\ell_k}(c_\alpha^{n_k}) + \frac{\lambda_\alpha^{n_k}}{10}\right).
	\end{equation}
\end{claima}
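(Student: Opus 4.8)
The plan is to first identify, via Lemma \ref{lem: positions_of_centers}, the projection $p_{n_k}(\tfrac12)$ of $\tfrac12$ onto $I^{n_k}$, and then transport the resulting estimate back to $\tfrac12$ using that $T^{\ell_k}$ restricts to a translation on $I^{n_k}_\alpha$. For the reduction step, note that since $\tfrac12\in T^{\ell_k}(I^{n_k}_\alpha)$ with $0\le\ell_k<h^{n_k}_\alpha$, and the levels of the Rauzy-Veech tower over $I^{n_k}_\alpha$ are pairwise disjoint and disjoint from $I^{n_k}$ except at the base, the integer $\ell_k$ is the smallest $l\ge 0$ with $T^{-l}(\tfrac12)\in I^{n_k}$; hence $p_{n_k}(\tfrac12)=T^{-\ell_k}(\tfrac12)\in I^{n_k}_\alpha$. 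Since $T$ acts as a translation on each tower level, $T^{\ell_k}$ restricts to a translation of $I^{n_k}_\alpha$, so that $\bigl|\tfrac12-T^{\ell_k}(c^{n_k}_\alpha)\bigr|=\bigl|p_{n_k}(\tfrac12)-c^{n_k}_\alpha\bigr|$, and it suffices to bound the latter quantity by $\lambda^{n_k}_\alpha/10$.

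Next I would invoke Lemma \ref{lem: positions_of_centers}, according to which $p_{n_k}(\tfrac12)$ equals either $c^{n_k}_\beta$ for some $\beta\in\A$, or $|I^{n_k}|/2$. In the first case, since $c^{n_k}_\beta\in I^{n_k}_\beta$, the point $p_{n_k}(\tfrac12)$ lies in $I^{n_k}_\alpha$, and the exchanged intervals are pairwise disjoint, we must have $\beta=\alpha$; thus $p_{n_k}(\tfrac12)=c^{n_k}_\alpha$ and the estimate is trivial. The only remaining case is $p_{n_k}(\tfrac12)=|I^{n_k}|/2\in I^{n_k}_\alpha$.

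To treat this case, write $m=\pi^{n_k}_0(\alpha)$ and express $l^{n_k}_\alpha$, $r^{n_k}_\alpha$, $c^{n_k}_\alpha$ and $|I^{n_k}|/2$ as partial sums of the lengths $\lambda^{n_k}_\beta$. Using Claim \ref{claim:pos_1/2}, which allows one to replace every $\lambda^{n_k}_\beta$ by $\lambda^{n_k}_\alpha$ up to an error smaller than $\lambda^{n_k}_\alpha/40d^2$, the containment $|I^{n_k}|/2\in[l^{n_k}_\alpha,r^{n_k}_\alpha)$ forces $m$ to be the central position $(d+1)/2$ — this is the one point where the parity of $d$ is used. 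With $m=(d+1)/2$ one computes
\[
\tfrac12|I^{n_k}|-c^{n_k}_\alpha=\tfrac12\Bigl(\sum_{\pi^{n_k}_0(\beta)>m}\lambda^{n_k}_\beta-\sum_{\pi^{n_k}_0(\beta)<m}\lambda^{n_k}_\beta\Bigr),
\]
where both sums range over exactly $(d-1)/2$ indices; a final application of Claim \ref{claim:pos_1/2} bounds the right-hand side by $\tfrac12(d-1)\lambda^{n_k}_\alpha/40d^2<\lambda^{n_k}_\alpha/10$, which together with the reduction step yields \eqref{eq: 1/2position}.

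I expect the only delicate point to be ruling out $m\ne(d+1)/2$ in the last case: one has to keep careful track of strict versus non-strict inequalities in the two partial sums that sandwich $|I^{n_k}|/2$. This is nonetheless routine given the very tight control on the lengths built into the definition of $A$ — indeed, the substantive input to the claim is entirely Lemma \ref{lem: positions_of_centers} together with the translation structure of the Rauzy-Veech towers, the definition of $A$ merely ensuring that the elementary estimates close up.
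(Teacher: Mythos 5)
Your proposal is correct and follows essentially the same route as the paper: reduce via the translation structure of the tower to comparing $p_{n_k}(\tfrac12)=T^{-\ell_k}(\tfrac12)$ with $c^{n_k}_\alpha$, use Lemma \ref{lem: positions_of_centers} to split into the trivial case $p_{n_k}(\tfrac12)=c^{n_k}_\alpha$ and the case $p_{n_k}(\tfrac12)=|I^{n_k}|/2$, in which case the choice of $A$ forces $\pi_0(\alpha)=\tfrac{d+1}{2}$ and Claim \ref{claim:pos_1/2} bounds $\bigl|c^{n_k}_\alpha-|I^{n_k}|/2\bigr|$ by a quantity smaller than $\lambda^{n_k}_\alpha/10$. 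Your write-up is in fact slightly more explicit than the paper's (spelling out $p_{n_k}(\tfrac12)=T^{-\ell_k}(\tfrac12)$ and the identification of the central index), and the estimates close with room to spare.
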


\begin{proof}[Proof of Claim \ref{cl:1/2}]
Fix $k \in \N$. In view of Lemma \ref{lem: positions_of_centers}, either $\tfrac{1}{2} = T^{\ell_k}(c_\alpha^{n_k})$ or $\tfrac{1}{2}=T^{\ell_k}\left(|I^{n_k}|/2\right)$. In the first case, \eqref{eq: 1/2position} follows trivially. 

In the latter case, since $\tfrac{1}{2}=T^{\ell_k}\left(|I^{n_k}|/2\right) \in T^{\ell_k}(I^{n_k}_{\alpha})$, then $|I^{n_k}|/2 \in I^{n_k}_{\alpha}$ and, by the choice of the set $A$, we have $\pi_0(\alpha) = \tfrac{\#A + 1}{2}$.

Hence, by {Claim} \ref{claim:pos_1/2},
\begin{align*}
\left| c_\alpha^{n_k} - \frac{|I^{n_k}|}{2}\right| & = \frac{1}{2} \big| l_\alpha^{n_k} + r_\alpha^{n_k} - |I^{n_k}| \big| = \frac{1}{2} \Bigg|\sum_{\pi_0(\beta) < \frac{\#A + 1}{2}} \lambda_\beta^{n_k} + \sum_{\pi_0(\beta) \leq \frac{\#A + 1}{2}} \lambda_\beta^{n_k} - \sum_{\beta \in \A} \lambda_\beta^{n_k} \Bigg| \\
& \leq \frac{{d}}{2} \frac{\lambda_\alpha^{n_k}}{{40} {d}^2} < 
\frac{\lambda_\alpha^{n_k}}{10}.
\end{align*}
Since $T^{\ell_k}$ acts via translation on $I^{n_k}_{\alpha}$,
\[ \left| T^{\ell_k}(c_\alpha^{n_k}) - \frac{1}{2} \right| = \left| T^{\ell_k}(c_\alpha^{n_k}) - T^{\ell_k}\left(|I^{n_k}|/2\right) \right| = \left| c_\alpha^{n_k} - \frac{|I^{n_k}|}{2}\right| < \frac{\lambda_\alpha^{n_k}}{10},\]
which implies \eqref{eq: 1/2position}. 
\end{proof}
Let $\overline \alpha = \pi_1^{-1}(\pi_0(\alpha)).$ The symbol $\overline \alpha$ corresponds to the interval positioned symmetrically to the interval indexed by $\alpha$; that is, it satisfies $\pi_0(\overline \alpha) = d + 1 - \pi_0(\alpha)$. For any $k \in \N$, let
\[\h_k:= h^{n_k}_\alpha + h^{n_k}_{\overline \alpha}.\]
We now define a sequence of subintervals of $I^{n_k}$ with good rigidity properties with respect to $T_{n_k}$ and $\h_k$. These sets will be used to define a partial rigidity sequence of sets for $T$. 
\stepcounter{claimcounter}\begin{claima}
\label{cl:rigidity}
For any $k \in \N$, the following holds.
\begin{enumerate}[(i)]
\item \label{cont_interval} $ \left(c^{n_k}_{\alpha} - 
\frac{2\lambda_\alpha^{n_k}}{5}, c^{n_k}_{\alpha} + 
\frac{2\lambda_\alpha^{n_k}}{5}\right)$ is a continuity interval for 
${T_{n_k}^{\pm 2}=}T^{\pm \h_k}.$

\item\label{rigidity_base} ${T_{n_k}^{\pm 1}} \left(c^{n_k}_{\alpha} - 
\frac{2\lambda_\alpha^{n_k}}{5}, c^{n_k}_{\alpha} + 
\frac{2\lambda_\alpha^{n_k}}{5}\right) \subseteq I_{\overline \alpha}^{n_k}$
and ${T_{n_k}^{\pm 2}}\left(c^{n_k}_{\alpha} - 
\frac{2\lambda_\alpha^{n_k}}{5}, c^{n_k}_{\alpha} + 
\frac{2\lambda_\alpha^{n_k}}{5}\right) \subseteq I_\alpha^{n_k}$.

\item\label{far_1/2} $c_\alpha^{n_k} - \tfrac{2\lambda_\alpha^{n_k}}{5} < T^{\h_k}\big(c_\alpha^{n_k} - \tfrac{\lambda_\alpha^{n_k}}{5}\big) < T^{-\ell_k}(\tfrac{1}{2})< T^{\h_k}\big(c_\alpha^{n_k} + \tfrac{\lambda_\alpha^{n_k}}{5}\big) < c_\alpha^{n_k} + \tfrac{2\lambda_\alpha^{n_k}}{5}.$
\end{enumerate}

\end{claima}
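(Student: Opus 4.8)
The plan is to verify the three items of Claim A3 by reducing everything to estimates on the renormalized IET $T_{n_k}$, which by Lemma \ref{lem: positions_of_centers} is symmetric and has lengths vector proportional to a point of $A$ that (by Claim \ref{claim:pos_1/2}) is extremely close to the fully balanced vector $\lambda_\beta^{n_k}\approx\tfrac1d|I^{n_k}|$. The crucial structural input is that, since $T_{n_k}$ is symmetric with nearly equal intervals and $\pi_0(\alpha)=\tfrac{\#\A+1}{2}$ (as established in the proof of Claim \ref{cl:1/2}, the relevant $\alpha$ sits in the middle slot), the interval $I_\alpha^{n_k}$ is mapped by $T_{n_k}$ to the symmetric slot $I_{\overline\alpha}^{n_k}$ and then back to $I_\alpha^{n_k}$ (that is, $\alpha$ and $\overline\alpha$ form an almost-$2$-periodic pair for $T_{n_k}$, with the middle interval essentially fixed). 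I would first record the exact translation vectors: $T_{n_k}$ acts on $I_\alpha^{n_k}$ by adding $\sum_{\pi_1(\beta)<\pi_1(\alpha)}\lambda_\beta^{n_k}-\sum_{\pi_0(\beta)<\pi_0(\alpha)}\lambda_\beta^{n_k}$, and by symmetry of $\pi$ and near-balancedness this quantity is bounded by $\tfrac{d}{2}\cdot\tfrac{\lambda_\alpha^{n_k}}{40d^2}$, the same bound used in Claim \ref{cl:1/2}.

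For \eqref{cont_interval} I would argue that $I_\alpha^{n_k}$, $I_{\overline\alpha}^{n_k}=T_{n_k}(I_\alpha^{n_k})$ and $T_{n_k}^2(I_\alpha^{n_k})$ are all continuity intervals of $T_{n_k}$ and that, by the translation estimate above, $T_{n_k}(c_\alpha^{n_k}-\tfrac{2\lambda_\alpha^{n_k}}{5},c_\alpha^{n_k}+\tfrac{2\lambda_\alpha^{n_k}}{5})$ lands well inside $I_{\overline\alpha}^{n_k}$ — the centers $c_\alpha^{n_k}$ and $c_{\overline\alpha}^{n_k}$ differ by at most $\tfrac{d}{2}\cdot\tfrac{\lambda_\alpha^{n_k}}{40d^2}<\tfrac{\lambda_\alpha^{n_k}}{20}$, so an interval of half-width $\tfrac25\lambda_\alpha^{n_k}$ around the image center stays inside an interval of half-width $\approx\tfrac12\lambda_{\overline\alpha}^{n_k}>\tfrac25\lambda_\alpha^{n_k}+\tfrac{\lambda_\alpha^{n_k}}{20}$ around $c_{\overline\alpha}^{n_k}$ — so $T_{n_k}$ is continuous there; applying $T_{n_k}$ once more, the further image stays inside $I_\alpha^{n_k}$ by the same computation with roles reversed (using that $\overline{\overline\alpha}=\alpha$). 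This simultaneously proves \eqref{rigidity_base}. Translating back through $T^{-\ell_k}$, which acts isometrically on the relevant levels, converts $T_{n_k}^{\pm2}$ into $T^{\pm\h_k}$ on the corresponding sub-towers, giving the stated continuity for $T^{\pm\h_k}$.

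For \eqref{far_1/2} I would use that $T^{\h_k}$ restricted to the interval in \eqref{cont_interval} equals $T_{n_k}^2$, which is a translation by a vector of absolute value at most $2\cdot\tfrac{d}{2}\cdot\tfrac{\lambda_\alpha^{n_k}}{40d^2}=\tfrac{\lambda_\alpha^{n_k}}{40d}\le\tfrac{\lambda_\alpha^{n_k}}{40}$; hence $T^{\h_k}(c_\alpha^{n_k}\pm\tfrac{\lambda_\alpha^{n_k}}{5})$ lies within $\tfrac{\lambda_\alpha^{n_k}}{40}$ of $c_\alpha^{n_k}\pm\tfrac{\lambda_\alpha^{n_k}}{5}$, which is comfortably between $c_\alpha^{n_k}-\tfrac{2\lambda_\alpha^{n_k}}{5}$ and $c_\alpha^{n_k}+\tfrac{2\lambda_\alpha^{n_k}}{5}$, yielding the outer inequalities. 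For the middle inequality I note $T^{-\ell_k}(\tfrac12)\in I_\alpha^{n_k}$ and, by Claim \ref{cl:1/2} together with the fact that $T^{\ell_k}$ is a translation on $I_\alpha^{n_k}$, $T^{-\ell_k}(\tfrac12)\in(c_\alpha^{n_k}-\tfrac{\lambda_\alpha^{n_k}}{10},c_\alpha^{n_k}+\tfrac{\lambda_\alpha^{n_k}}{10})$, so it lies strictly between the two translated points (whose distance from $c_\alpha^{n_k}$ is at least $\tfrac{\lambda_\alpha^{n_k}}{5}-\tfrac{\lambda_\alpha^{n_k}}{40}>\tfrac{\lambda_\alpha^{n_k}}{10}$). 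The main obstacle — and where I would spend the most care — is bookkeeping the symmetry: one must check that $\overline\alpha$ really is the image symbol, i.e. that $T_{n_k}(I_\alpha^{n_k})=I_{\overline\alpha}^{n_k}$ exactly (not merely approximately), which follows because the middle interval of a symmetric IET maps to the interval in the symmetric slot regardless of the precise lengths, and then that the composition returns to $I_\alpha^{n_k}$; keeping the constants $\tfrac15,\tfrac25,\tfrac{1}{40d^2}$ consistent so that all the nested-interval inclusions hold with room to spare is the delicate part, but it is purely arithmetic.
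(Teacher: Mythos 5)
There is a genuine gap: your whole argument rests on the assumption that $\alpha$ is the middle slot, i.e.\ $\pi_0(\alpha)=\tfrac{d+1}{2}$, which you claim was ``established in the proof of Claim \ref{cl:1/2}''. It was not: in that proof the identity $\pi_0(\alpha)=\tfrac{\#\A+1}{2}$ is derived only in the sub-case $\tfrac12=T^{\ell_k}\big(|I^{n_k}|/2\big)$; in the other (generic) sub-case $\tfrac12=T^{\ell_k}(c_\alpha^{n_k})$ the symbol $\alpha$ can be arbitrary (the paper's Figure illustrating Case A even takes $\alpha=\pi_0^{-1}(2)$). For a non-middle $\alpha$ your key estimate fails: the one-step translation vector $w_\alpha=\sum_{\pi_1(\beta)<\pi_1(\alpha)}\lambda^{n_k}_\beta-\sum_{\pi_0(\beta)<\pi_0(\alpha)}\lambda^{n_k}_\beta$ equals, by symmetry, $\sum_{\pi_0(\beta)>\pi_0(\alpha)}\lambda^{n_k}_\beta-\sum_{\pi_0(\beta)<\pi_0(\alpha)}\lambda^{n_k}_\beta$, which for nearly balanced lengths is of size about $\tfrac{|d+1-2\pi_0(\alpha)|}{d}\,|I^{n_k}|$ — not $O(\lambda_\alpha^{n_k}/d)$ — so $T_{n_k}$ does not approximately fix $c^{n_k}_\alpha$, and likewise $|c^{n_k}_\alpha-c^{n_k}_{\overline\alpha}|$ is of macroscopic size, so the inclusion of the image interval into $I^{n_k}_{\overline\alpha}$ cannot be argued by a small drift from $c^{n_k}_\alpha$. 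The point you flag at the end as ``delicate bookkeeping'', namely that $T_{n_k}(I^{n_k}_\alpha)=I^{n_k}_{\overline\alpha}$ exactly, is in fact false in general (and also unnecessary): for a symmetric IET one only has $T_{n_k}(I^{n_k}_\alpha)=\mathcal I_{n_k}(I^{n_k}_\alpha)$, the mirror image of $I^{n_k}_\alpha$ in $I^{n_k}$, which coincides with $I^{n_k}_{\overline\alpha}$ only when the length vector itself is symmetric.

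The paper's proof avoids all of this by using the symmetry at the level of centers rather than of translation vectors: since each exchanged interval is mapped onto its mirror image, $T_{n_k}(c^{n_k}_\beta)=|I^{n_k}|-c^{n_k}_\beta$ for every $\beta$, and near-balancedness (Claim \ref{claim:pos_1/2}) then gives $|T_{n_k}(c^{n_k}_\alpha)-c^{n_k}_{\overline\alpha}|<\lambda^{n_k}_\alpha/40$ and $|T_{n_k}(c^{n_k}_{\overline\alpha})-c^{n_k}_\alpha|<\lambda^{n_k}_\alpha/40$ for \emph{every} $\alpha$. Thus only the two-step displacement near $c^{n_k}_\alpha$ is small (the one step sends a neighbourhood of $c^{n_k}_\alpha$ deep inside $I^{n_k}_{\overline\alpha}$, possibly far away, and the second step brings it back), and items (\ref{cont_interval})--(\ref{far_1/2}) follow from these two center estimates together with Claims \ref{claim:pos_1/2} and \ref{cl: 1/2position}. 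Your outline of item (\ref{far_1/2}) (outer inequalities from a small two-step drift, middle inequality from Claim \ref{cl: 1/2position}) is the right skeleton, but to make it valid for all admissible $\alpha$ you must replace the ``small one-step translation vector'' input by the mirrored-centers identity above.
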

\begin{proof}[Proof of Claim \ref{cl:rigidity}]
Let us show that
\begin{equation}
\label{eq:distance_center_alpha}
\left| T_{n_k}(c_{\overline \alpha}^{n_k}) - c_{\alpha}^{n_k}\right| < 
\frac{\lambda_\alpha^{n_k}}{{40}}, \qquad
\left| T_{n_k}(c_{\alpha}^{n_k}) - c_{\overline \alpha}^{n_k}\right| < 
\frac{\lambda_\alpha^{n_k}}{{40}}.
\end{equation}
Since $c_{\overline \alpha}^{n_k}$ is the center of one of the intervals 
exchanged by $T_{n_k}$ and $\pi^{n_k} = \pi$ is symmetric, 
$T_{n_k}(c_{\overline \alpha}^{n_k}) = {|I^{n_k}}| - c_{\overline \alpha}^{n_k} 
$. Thus 
\[T_{n_k}(c_{\overline \alpha}^{n_k}) = \frac{1}{2}({|I^{n_k}|} - l_{\overline 
\alpha}^{n_k}) + \frac{1}{2}({|I^{n_k}|} - r_{\overline \alpha}^{n_k}) = 
\frac{1}{2} \sum_{\pi_0(\beta) \geq \pi_0(\overline \alpha)} 
\lambda_\beta^{n_k} + \frac{1}{2} \sum_{\pi_0(\beta) > \pi_0(\overline \alpha)} 
\lambda_\beta^{n_k} .\]
Since $\pi^{n_k} = \pi$ is symmetric,
\[\#\{\beta \in \A \mid \pi_0(\beta)>\pi_0(\overline \alpha)\}=\#\{\beta \in \A \mid \pi_0(\beta)<\pi_0(\alpha)\}.\]
The previous relations, together with Claim \ref{claim:pos_1/2}, yield
\begin{align*}
\left| T_{n_k}(c_{\overline \alpha}^{n_k}) - c_\alpha^{n_k}\right| & = \frac{1}{2}\left| \sum_{\pi_0(\beta) \geq \pi_0(\overline \alpha)} \lambda_\beta^{n_k} + \sum_{\pi_0(\beta) > \pi_0(\overline \alpha)} \lambda_\beta^{n_k} - \sum_{\pi_0(\beta) < \pi_0(\alpha)} \lambda_\beta^{n_k} - \sum_{\pi_0(\beta) \leq \pi_0(\alpha)} \lambda_\beta^{n_k} \right| \\
& \leq \frac{1}{2}\left| \sum_{\pi_0(\beta) \geq \pi_0(\overline \alpha)} \lambda_\beta^{n_k} - \sum_{\pi_0(\beta) \leq \pi_0(\alpha)} \lambda_\beta^{n_k} \right| + \frac{1}{2}\left| \sum_{\pi_0(\beta) > \pi_0(\overline \alpha)} \lambda_\beta^{n_k} - \sum_{\pi_0(\beta) <\pi_0(\alpha)} \lambda_\beta^{n_k} \right| \\
& \leq \frac{{d}}{2} \frac{\lambda_\alpha^{n_k}}{{20} {d}^2} < 
\frac{\lambda_\alpha^{n_k}}{{40}}.
\end{align*}
This proves the first inequality in \eqref{eq:distance_center_alpha}. The proof of the second inequality is entirely analogous. 

{Let us check that equation \eqref{eq:distance_center_alpha}, together with 
{Claim \ref{claim:pos_1/2} and Claim \ref{cl: 1/2position}}, imply the 
conclusions of the claim. {We first show (\ref{cont_interval}) and 
(\ref{rigidity_base}) for $\h_k$, as the proof for $-\h_k$ follows along the 
same lines. Since 
		\[
	\left(c^{n_k}_{\alpha} - \frac{2\lambda_\alpha^{n_k}}{5}, c^{n_k}_{\alpha} + \frac{2\lambda_\alpha^{n_k}}{5}\right)\subseteq I_{\alpha}^{n_k}	
		\]
	and $T_{n_k}|_{I_{\alpha}^{n_k}}=T^{h_{\alpha}^{n_k}}$, by \eqref{eq:distance_center_alpha}we obtain 
	\[
	\begin{split}
	T^{h_{\alpha}^{n_k}}&\left(c^{n_k}_{\alpha} - \frac{2\lambda_\alpha^{n_k}}{5}, c^{n_k}_{\alpha} + \frac{2\lambda_\alpha^{n_k}}{5}\right)\subseteq \left(c^{n_k}_{\overline\alpha} - \frac{2\lambda_\alpha^{n_k}}{5}-\frac{\lambda_\alpha^{n_k}}{40}, c^{n_k}_{\overline\alpha} + \frac{2\lambda_\alpha^{n_k}}{5}+\frac{\lambda_\alpha^{n_k}}{40}\right)\\
	& =\left(c^{n_k}_{\overline\alpha} - \frac{17\lambda_\alpha^{n_k}}{40}, 
	c^{n_k}_{\overline\alpha} + \frac{17\lambda_\alpha^{n_k}}{40}\right).	
	\end{split}
	\]
	In view of Claim \ref{claim:pos_1/2}, by taking $\beta=\alpha$ and $\beta'=\overline\alpha$, we get 
	\[
	\lambda_\alpha^{n_k}<\frac{40}{39} \lambda_{\overline\alpha}^{n_k}.
	\]
	Thus
	\[
	T^{h_{\alpha}^{n_k}}\left(c^{n_k}_{\alpha} - \frac{2\lambda_\alpha^{n_k}}{5}, c^{n_k}_{\alpha} + \frac{2\lambda_\alpha^{n_k}}{5}\right)\subseteq\left(c^{n_k}_{\overline\alpha} - \frac{17\lambda_{\overline\alpha}^{n_k}}{39}, c^{n_k}_{\overline\alpha} + \frac{17\lambda_{\overline\alpha}^{n_k}}{39}\right)\subseteq I_{\overline\alpha}^{n_k}.
	\]
	
	In particular, we obtained the first inclusion in (\ref{rigidity_base}). Since $I_{\overline\alpha}^{n_k}$ is a continuity interval for $T_{n_k}$ and $T_{n_k}|_{I_{\overline\alpha}^{n_k}}=T^{h_{\overline\alpha}^{n_k}}$, we get (\ref{cont_interval}) . To obtain the second inclusion in (\ref{rigidity_base}) , note that, again by \eqref{eq:distance_center_alpha}, we have
	\[
	T^{h_{\alpha}^{n_k}}\left(c^{n_k}_{\overline\alpha} - 
	\frac{17\lambda_\alpha^{n_k}}{40}, 
	c^{n_k}_{\overline\alpha} + 
	\frac{17\lambda_\alpha^{n_k}}{40}\right)\subseteq 
	\left(c^{n_k}_{\overline\alpha} - \frac{9\lambda_\alpha^{n_k}}{20}, 
	c^{n_k}_{\overline\alpha} + 
	\frac{9\lambda_\alpha^{n_k}}{20}\right)\subseteq I_{\alpha}^{n_k}.
	\]
	
	To prove (\ref{far_1/2}), by Claim \ref{cl: 1/2position}, it is enough to show that 
	\begin{equation}\label{iii_first_ineq}
	c_\alpha^{n_k} - \tfrac{2\lambda_\alpha^{n_k}}{5} < 
	T^{\h_k}\big(c_\alpha^{n_k} - \tfrac{\lambda_\alpha^{n_k}}{5}\big) 
	<c_\alpha^{n_k} - \tfrac{\lambda_\alpha^{n_k}}{10} 
	\end{equation}
	and
		\[
	c_\alpha^{n_k} + \tfrac{\lambda_\alpha^{n_k}}{10} < 
	T^{\h_k}\big(c_\alpha^{n_k} - \tfrac{\lambda_\alpha^{n_k}}{5}\big) 
	<c_\alpha^{n_k} + \tfrac{2\lambda_\alpha^{n_k}}{5}. 
	\]
	We will show the first inequality; the latter is proven analogously. By 
	\eqref{eq:distance_center_alpha} we have
	\[
	T^{\h_k}\big(c_\alpha^{n_k} - \tfrac{\lambda_\alpha^{n_k}}{5}\big)= 
	T_{n_k}^2\big(c_\alpha^{n_k} - \tfrac{\lambda_\alpha^{n_k}}{5}\big)>
	\big(c_\alpha^{n_k} - \tfrac{\lambda_\alpha^{n_k}}{5} - 
	\tfrac{\lambda_\alpha^{n_k}}{20}\big)>\big(c_\alpha^{n_k} - 
	\tfrac{2\lambda_\alpha^{n_k}}{5}\big)
	\]
	and
	\[
	T^{\h_k}\big(c_\alpha^{n_k} - \tfrac{\lambda_\alpha^{n_k}}{5}\big)= 
	T_{n_k}^2\big(c_\alpha^{n_k} - \tfrac{\lambda_\alpha^{n_k}}{5}\big)<
	\big(c_\alpha^{n_k} - \tfrac{\lambda_\alpha^{n_k}}{5} + 
	\tfrac{\lambda_\alpha^{n_k}}{20}\big)<\big(c_\alpha^{n_k} - 
	\tfrac{\lambda_\alpha^{n_k}}{10}\big),
	\]
	which proves \eqref{iii_first_ineq}.

}	
}
\end{proof}

By taking preimages of an appropriate subinterval of the floor $T^{\ell_k}(I^{n_k}_\alpha)$, we can construct a set on which we have good control of the $\h_k$-th Birkhoff sum of $f$. Denote
\begin{equation}
\label{eq:min_heights}
q_k:= \min_{\alpha \in \A}h^{n_k}_\alpha,
\end{equation}
for any $k \geq 1$. 
\stepcounter{claimcounter}\begin{claima}
\label{cl:controled_BS}
Let $k \in \N$. For any $x \in T^{-i} \left( T^{\ell_k} \left(c^{n_k}_{\alpha} - \frac{2\lambda_\alpha^{n_k}}{5}, c^{n_k}_{\alpha} + \frac{2\lambda_\alpha^{n_k}}{5}\right)\right)$ and any $0 \leq i < q_k$,
\begin{equation}
\label{eq:constant_BS}
S_{\h_k} f(x) - S_{\h_k} f(\tfrac{1}{2} ) = f(T^{i}(x)) = \left\{ \begin{array}{lcr} 1 & \text{ if } & T^{i}(x) < \tfrac{1}{2}, \\ 0 & \text{ if } & T^{i}(x) = \tfrac{1}{2}, \\ -1 & \text{ if } & T^{i}(x) > \tfrac{1}{2}. \\ \end{array}\right.
\end{equation}
\end{claima}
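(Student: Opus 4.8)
The plan is to transfer the identity to the point $y := T^{i}(x)$, which by hypothesis lies in the interval $J_k := T^{\ell_k}\big(c^{n_k}_\alpha - \tfrac{2\lambda^{n_k}_\alpha}{5},\, c^{n_k}_\alpha + \tfrac{2\lambda^{n_k}_\alpha}{5}\big)$, that is, in a short central subinterval of the $\ell_k$-th floor of the $\alpha$-th Rauzy-Veech tower at step $n_k$. By Claim \ref{cl: 1/2position} we have $\tfrac{1}{2} \in J_k$, and in fact $\tfrac{1}{2}$ lies in the interior of that floor; since the only interior jump of $f$ is at $\tfrac{1}{2}$, this floor is the unique Rauzy-Veech tower floor on which $f$ fails to be constant. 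I will prove separately that (a)~$S_{\h_k}f(y) - S_{\h_k}f(\tfrac{1}{2}) = f(y)$ and (b)~$S_{\h_k}f(x) = S_{\h_k}f(y)$. Combined with $f(\tfrac{1}{2}) = 0$ and $f(T^{i}(x)) = f(y)$, these give the claim; the trichotomy in the displayed formula is then simply the value of $f$ on $J_k$ (equal to $1$ to the left of $\tfrac{1}{2}$, $-1$ to the right, and $0$ at $\tfrac{1}{2}$).

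For (a) I would trace the orbit of an arbitrary $z \in J_k$ under $T^0, T, \dots, T^{\h_k - 1}$ through the tower structure. By Claim \ref{cl:rigidity}, parts (\ref{cont_interval}) and (\ref{rigidity_base}), the map $T^{h^{n_k}_\alpha - \ell_k}$ carries $J_k$ into $I^{n_k}_{\overline\alpha}$ and then $T^{h^{n_k}_{\overline\alpha}}$ carries the image into $I^{n_k}_\alpha$; hence this orbit segment visits, in order, the floors $\ell_k, \dots, h^{n_k}_\alpha - 1$ of tower $\alpha$, then all floors of tower $\overline\alpha$, then the floors $0, \dots, \ell_k - 1$ of tower $\alpha$ --- i.e. every floor of towers $\alpha$ and $\overline\alpha$ exactly once, with the floor containing $\tfrac{1}{2}$ occurring only at time $0$. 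Moreover, by the same inclusions each $T^{j}$ with $0 \le j \le \h_k$ acts on $J_k$ by a single translation, so applying this description to $z = y$ and to $z = \tfrac{1}{2}$ (both in $J_k$) the two orbits lie on the same floor at every time. Since $f$ is constant on every floor except the one containing $\tfrac{1}{2}$, all terms of $S_{\h_k}f(y)$ and $S_{\h_k}f(\tfrac{1}{2})$ cancel in pairs except at time $0$, whence $S_{\h_k}f(y) - S_{\h_k}f(\tfrac{1}{2}) = f(y) - f(\tfrac{1}{2}) = f(y)$.

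For (b), from $x = T^{-i}(y)$ and the cocycle relation one gets $S_{\h_k}f(x) - S_{\h_k}f(y) = \sum_{m=1}^{i}\big(f(T^{-m}y) - f(T^{-m}(T^{\h_k}y))\big)$, so it suffices to check that $f(T^{-m}y) = f(T^{-m}(T^{\h_k}y))$ for every $1 \le m \le i$. Both $y$ and $T^{\h_k}y$ lie on floor $\ell_k$ of tower $\alpha$, and by Claim \ref{cl:rigidity} their base projections $\tilde y := T^{-\ell_k}y$ and $\tilde y' := T^{-\ell_k}(T^{\h_k}y) = T_{n_k}^2\tilde y$ both lie in a short interval around $c^{n_k}_\alpha$. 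Descending $m \le \ell_k$ steps keeps both points on floor $\ell_k - m < \ell_k$ of tower $\alpha$. Descending $m > \ell_k$ steps first reaches $I^{n_k}_\alpha$ and then, using that $T_{n_k}$ is symmetric so that its image intervals $T_{n_k}(I^{n_k}_\beta)$ are the mirror images of the $\{I^{n_k}_\beta\}$, and that the short interval around $c^{n_k}_\alpha$ in which we work sits well inside $T_{n_k}(I^{n_k}_{\overline\alpha})$ (this is exactly where the balance estimates from the definition of $A$ and from Claim \ref{claim:pos_1/2} enter), both $\tilde y$ and $\tilde y'$ pull back under $T_{n_k}$ into $I^{n_k}_{\overline\alpha}$; and since $m - \ell_k \le i < q_k$ is smaller than every tower height, this descent never leaves tower $\overline\alpha$, so it places both points on one common floor of tower $\overline\alpha$. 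In all cases $T^{-m}y$ and $T^{-m}(T^{\h_k}y)$ lie on a common floor different from the floor of $\tfrac{1}{2}$, on which $f$ is constant; thus $f$ takes the same value at both, and summing over $m$ proves (b).

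The main obstacle is part (b), namely controlling the $i$ backward steps below the floor of $\tfrac{1}{2}$. The hypothesis $i < q_k$ is crucial here: it guarantees that the backward descent stays inside a single tower, so that one never loses track of which floor a point lies on; and the fact that the two backward orbits fall into the \emph{same} tower $\overline\alpha$ genuinely uses the symmetry of the renormalized IET $T_{n_k}$ together with the slack built into the set $A$. Part (a), by contrast, is a routine bookkeeping consequence of Claim \ref{cl:rigidity}.
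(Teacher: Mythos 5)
Your proposal is, in substance, the paper's own proof. The paper argues in one stroke: by Claim \ref{cl:rigidity}(\ref{cont_interval})--(\ref{rigidity_base}) and $i<q_k$, the length-$\h_k$ orbit of $x$ meets every level of the two towers over $I^{n_k}_\alpha$ and $I^{n_k}_{\overline\alpha}$ exactly once, as does the length-$\h_k$ orbit of $\tfrac{1}{2}$; since $f$ is constant on every level except the one containing $\tfrac{1}{2}$, the two Birkhoff sums differ only by the contribution of that level, i.e.\ by $f(T^i x)-f(\tfrac{1}{2})=f(T^ix)$. Your split into (a) and (b) is an explicit unpacking of exactly this bookkeeping: the telescoped boundary pairs $f(T^{-m}y)$ versus $f(T^{-m}(T^{\h_k}y))$ in (b) are precisely the levels that the paper's ``one point in each level'' statement accounts for, and you use the same inputs (Claims \ref{claim:pos_1/2}, \ref{cl: 1/2position}, \ref{cl:rigidity}, and $i<q_k$). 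So this is not a genuinely different route, and it is correct whenever the towers over $I^{n_k}_\alpha$ and $I^{n_k}_{\overline\alpha}$ are distinct.

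One caveat, which affects your write-up and the paper's proof in exactly the same way: when $d$ is odd and $\pi_0(\alpha)=\tfrac{d+1}{2}$ (a configuration the paper explicitly allows, cf.\ Figure \ref{fig:sub2}), one has $\overline\alpha=\alpha$, so $\h_k=2h^{n_k}_\alpha$ and the length-$\h_k$ orbit of a point of the floor $T^{\ell_k}(I^{n_k}_\alpha)$ returns to that floor at time $h^{n_k}_\alpha$. Your assertion in (a) that the floor containing $\tfrac{1}{2}$ ``occurs only at time $0$'' then fails, and the comparison picks up the extra term $f\big(T^{h^{n_k}_\alpha}(T^ix)\big)-f\big(T^{h^{n_k}_\alpha}(\tfrac{1}{2})\big)$; since $T^{h^{n_k}_\alpha}$ acts on the central part of that floor as a translation by a generically nonzero amount (of size at most $\lambda^{n_k}_\alpha/40$, by \eqref{eq:distance_center_alpha}), this term is $\pm 2$ for suitable $y=T^ix\in J_k$, so the identity as stated over the whole interval $J_k$ requires a separate argument in that case (one checks, for instance, that it would give $S_{\h_k}f\mid_{\Xi_k}=-2$ rather than $-1$, so the fix must modify the choice of rigidity time, e.g.\ a single sweep $h^{n_k}_\alpha$). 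Your part (b) survives this degeneracy, since $m\le i<q_k$ keeps both backward points strictly below the floor $\ell_k$; it is only part (a) that, like the paper's ``exactly one point in each of the levels of the towers,'' silently assumes $\overline\alpha\neq\alpha$. Since the paper makes the same tacit assumption, this is not a gap of your proposal relative to the paper, but it is worth being aware of.
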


\begin{proof}
By \eqref{cont_interval} and \eqref{rigidity_base} in Claim \ref{cl:rigidity}, given $x \in T^{-i} \left(T^{\ell_k}(c_\alpha^{n_k}) - \frac{2\lambda_\alpha^{n_k}}{5}, T^{\ell_k}(c_\alpha^{n_k}) + \frac{2\lambda_\alpha^{n_k}}{5}\right)$ for some $0 \leq i < q_k$, there is exactly one point of $\{x, T(x), \dots, T^{\h_k - 1}(x)\}$ in each of the levels of the towers $\bigcup_{j=0}^{h_{\alpha}^{n_k}-1}T^j(I^{n_k}_\alpha)$ and $\bigcup_{j=0}^{h_{\overline \alpha}^{n_k}-1}T^j(I^{n_k}_{\overline \alpha}).$ 

Since $\tfrac{1}{2}$ is the only discontinuity of $f$ and $T^{i}(x), 
\tfrac{1}{2} \in T^{\ell_k}(I^{n_k}_\alpha)$, 
\[S_{\h_k} f(x) - S_{\h_k} f(\tfrac{1}{2} ) = f(T^{i}(x)) - f(\tfrac{1}{2}) = f(T^{i}(x)),\]
{and the proof of the claim follows from the form of $f$}.
\end{proof}

Taking the previous claims into account, we define a sequence of Rohlin towers $(\Xi_k)_{k \in \N}$ by 
\[ \Xi_k := \bigcup_{i=0}^{q_k-1}T^{-i}(B_k),\qquad B_k:= 
T^{\ell_k} \left(c^{n_k}_{\alpha} + \frac{\lambda_\alpha^{n_k}}{5}, c^{n_k}_{\alpha} + \frac{2\lambda_\alpha^{n_k}}{5}\right), \]
for any $k\in\N$, where $q_k$ is given by \eqref{eq:min_heights}. Notice that we choose as a base for the Rohlin tower a subinterval to the right of $\tfrac{1}{2}$ and that $\Xi_k$ is contained in the union of at most two Rohlin towers. This choice is made so that, for any $x \in \Xi_k$, the value in the RHS of \eqref{eq:constant_BS} equals $-1$.

\begin{figure}[h]
\begin{subfigure}{.5\textwidth}
 \centering
 \includegraphics[scale=0.73]{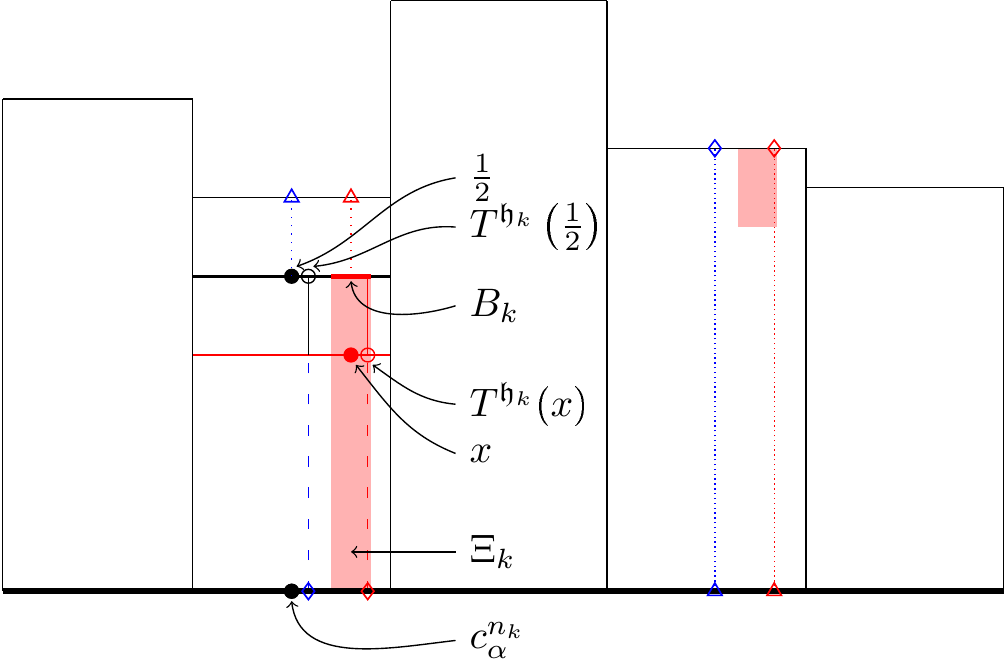}
 \caption{$\alpha =\pi_0^{-1}(2)$.}
 \label{fig:sub1}
\end{subfigure}%
\bigskip
\begin{subfigure}{.5\textwidth}
 \centering
 \includegraphics[scale=0.73]{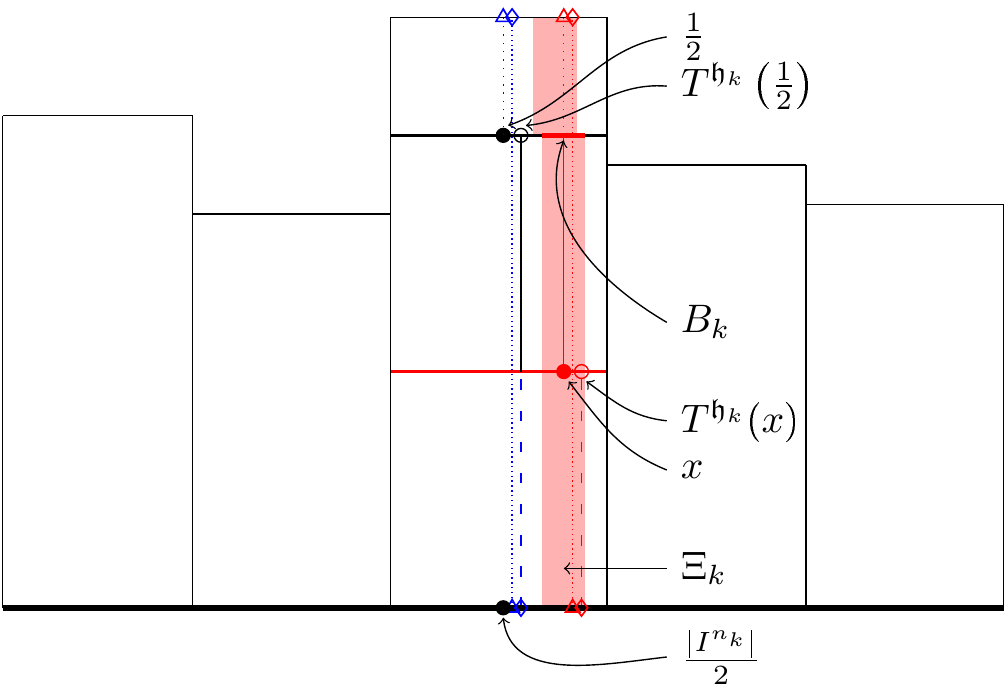}
 \caption{$\alpha =\pi_0^{-1}(\tfrac{d + 1}{2})$ and $\tfrac{1}{2}=T^{\ell_k}\left(|I^{n_k}|/2\right)$.}
 \label{fig:sub2}
\end{subfigure}
\caption{\small Case A: $d = \#\A$ odd. Through different dashing patterns, the figure above shows how to split the Birkhoff sums $S_{\mathfrak{h}_k}f(\tfrac{1}{2})$ and $S_{\mathfrak{h}_k}f(x)$ into pieces having the same value, except for the points belonging to the floor that contains $\tfrac{1}{2}$. As $\tfrac{1}{2}$ is the only discontinuity of $f$, if $\tfrac{1}{2}$ is not in the forward orbit of $x$ these $\mathfrak{h}_k$-th Birkhoff sums differ by $\pm1$.}
\label{fig:odd}
\end{figure}

Let us show that $(\Xi_k)_{k \in \N}$ and $(\h_k)_{k \in \N}$ define a partial rigidity sequence for $T$ satisfying
 \begin{equation}
 \label{eq:BS_double}
 S_{\h_k}f\mid_{\Xi_k} = {-1}.
 \end{equation}

Note that $\textup{Leb}(\Xi_k)=q_k\cdot\lambda_{\alpha}^{n_k}$. Since the Rauzy 
path 
$\gamma$ given by Lemma \ref{lem: positions_of_centers} is 
{positive} and fixed, Proposition \ref{prop: gammaconstant} together with the 
choice of the set $A$ yield
\[ 1 = \sum_{\beta \in \A} h_\beta^{n_k} \lambda_\beta^{n_k} \leq 2{d} 
\rho(\gamma) q_k \lambda _\alpha^{n_k} ={10d} 
\rho(\gamma)\textup{Leb}(\Xi_k),\] 
for any $k \in \N$, where $\rho(\gamma) > 0$ is the constant given by Proposition \ref{prop: gammaconstant}. Hence, up to taking a subsequence, we may assume that
 \[ \delta:= \lim_{k \to \infty} \textup{Leb}(\Xi_k) > 0.\]
 As $|I^{n_k}|\to 0$ as $k\to\infty$, the previous equation together with \eqref{rigidity_base} in Claim \ref{cl:rigidity} show that $(\Xi_k)_{k\in\N}$ is partial rigidity sequence for $T$ with rigidity times $(\h_k)_{k\in\N}$. 
 
Fix $k \in \N$, and let us show that \eqref{eq:BS_double} holds. Notice that by 
\eqref{far_1/2} in Claim \ref{cl:rigidity} and {Claim} \ref{cl:controled_BS}, 
$S_{\h_k}f\mid_{\Xi_k}$ is constant and satisfies
\begin{equation}
\label{eq:BS_formula}
S_{\h_k}f\mid_{\Xi_k} = S_{\h_k} f (\tfrac{1}{2}) - 1.
\end{equation}
By \eqref{eq: 1/2position} and \eqref{far_1/2} in Claim \ref{cl:rigidity}, $$T^{-\h_k}(\tfrac{1}{2}), \tfrac{1}{2}, T^{\h_k}(\tfrac{1}{2}) \in T^{\ell_k} \left(c^{n_k}_{\alpha} - \frac{2\lambda_\alpha^{n_k}}{5}, c^{n_k}_{\alpha} + \frac{2\lambda_\alpha^{n_k}}{5}\right).$$ By Corollary \ref{cor: cancellations} and Claim \ref{cl:controled_BS} applied to $T^{-\h_k}(\tfrac{1}{2})$, 
\[ - f(T^{\h_k}(\tfrac{1}{2})) = S_{2\h_k} f (T^{-\h_k}(\tfrac{1}{2})) = S_{\h_k} f (T^{-\h_k}(\tfrac{1}{2})) + S_{\h_k} f (\tfrac{1}{2}) = 2S_{\h_k} f (\tfrac{1}{2}) + f (T^{-\h_k}(\tfrac{1}{2})).\]
Hence, since either $T^{-\h_k}(\tfrac{1}{2}) < \tfrac{1}{2} < T^{\h_k}(\tfrac{1}{2})$ or $T^{\h_k}(\tfrac{1}{2}) < \tfrac{1}{2} < T^{-\h_k}(\tfrac{1}{2}),$
\begin{equation}\label{eq: valueat1/2}
	 S_{\h_k} f (\tfrac{1}{2}) = \frac{1}{2}\big( - f (T^{\h_k}(\tfrac{1}{2})) - f (T^{-\h_k}(\tfrac{1}{2}))\big) = 0.
\end{equation}

Therefore, by \eqref{eq:BS_formula}, equation \eqref{eq:BS_double} follows. 

It follows from Proposition \ref{prop:intessentialvalue} that $-1$ is an 
essential value for $T_f$, and thus, by Theorem \ref{thm: erg_criterion}, $T_f$ 
is ergodic. This finishes the proof for ${d}$ odd.

\medskip

\noindent \textbf{Case B: ${d}$ is even.} {We illustrate this situation in 
Figure \ref{fig:even}. Notice that, in this case, not all intervals are of 
comparable size.} {However, as the proof follows the same arguments as in Case 
A, we will not provide a full proof but rather give the analogous statements 
for the Claims \ref{claim:pos_1/2}-\ref{cl:controled_BS} that will allow to 
conclude the proof.} 

\begin{figure}[h]
\begin{subfigure}{.5\textwidth}
 \centering
 \includegraphics[scale=1]{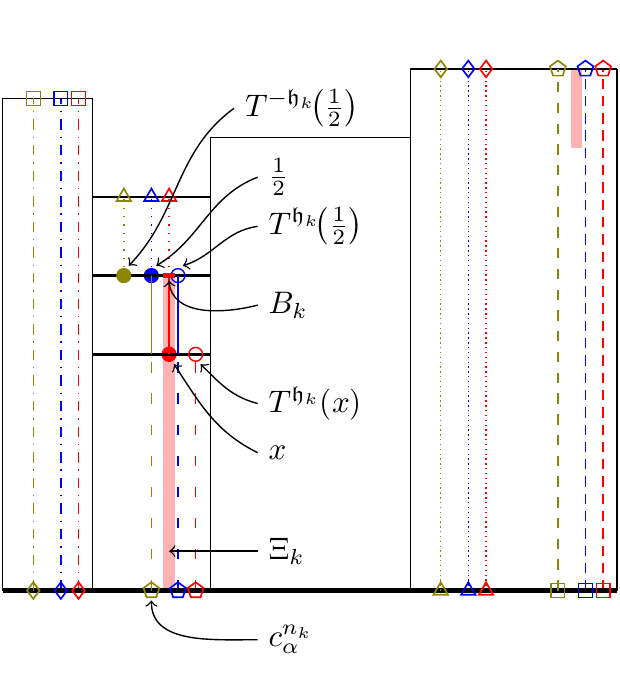}
 \caption{$\alpha =\pi_0^{-1}(2)$.}
 \label{fig:sub3}
\end{subfigure}%
\begin{subfigure}{.5\textwidth}
 \centering
 \includegraphics[scale=1]{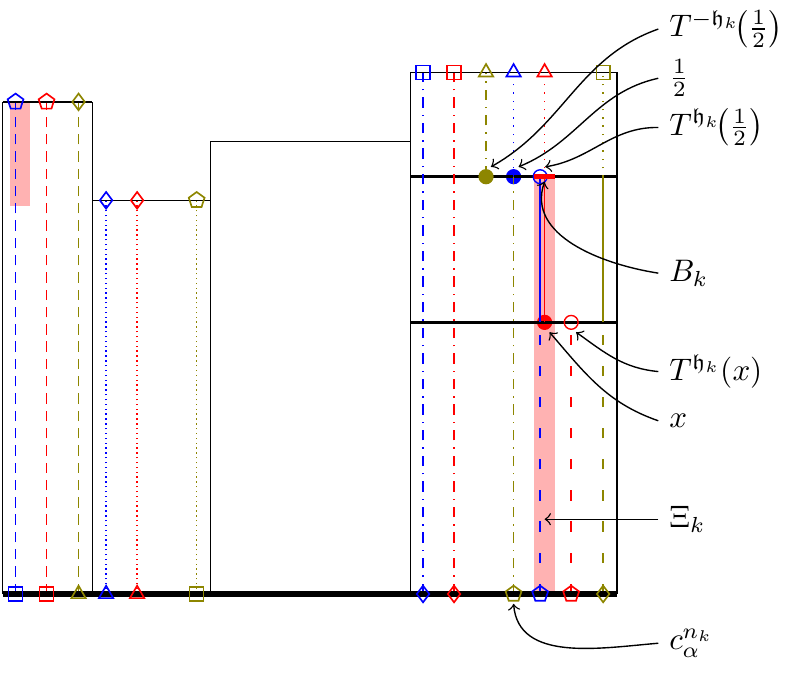}
 \caption{$\alpha =\pi_0^{-1}(d)$.}
 \label{fig:sub4}
\end{subfigure}

\caption{\small Case B: $d = \#\A$ even. Through different dashing patterns, the figure above shows how to split the Birkhoff sums $S_{\mathfrak{h}_k}f(T^{-\mathfrak{h}_k}(\tfrac{1}{2}))$, $S_{\mathfrak{h}_k}f(\tfrac{1}{2})$ and $S_{\mathfrak{h}_k}f(x)$ into pieces having the same value, except for the points belonging to the floor that contains $\tfrac{1}{2}$. As $\tfrac{1}{2}$ is the only discontinuity of $f$, if $\tfrac{1}{2}$ is not in the forward orbit of $x$, these $\mathfrak{h}_k$-th Birkhoff sums differ by $\pm1$.}
\label{fig:even}
\end{figure}

{An analogue of Claim \ref{claim:pos_1/2} is as follows and its proof follows 
precisely the same lines as the proof of Claim \ref{claim:pos_1/2}. 
	\setcounter{claimcounter}{0}
	\stepcounter{claimcounter}\begin{claimb}\label{cl:newclaim1}
	For every $k\in\N$, we have
	\begin{equation*}
		\begin{split}
	\max\Big\{&	\max_{\beta,\beta'\in\mathcal 
	A\setminus\{\pi_0^{-1}(1),\pi_0^{-1}(2) 
	\}}\left|\lambda_{\beta}^{n_k}-\lambda_{\beta'}^{n_k}\right|,\, 
	\max_{\beta\in\mathcal A\setminus\{\pi_0^{-1}(1),\pi_0^{-1}(2) 
	\}}\left|\lambda_{\beta}^{n_k}-\tfrac{7}{3}\lambda_{\pi_0^{-1}(1)}^{n_k}\right|,\\ 
	&\max_{\beta\in\mathcal A\setminus\{\pi_0^{-1}(1),\pi_0^{-1}(2) 
		\}}\left|\lambda_{\beta}^{n_k}
	-\tfrac{7}{4}\lambda_{\pi_0^{-1}(2)}^{n_k}\right|,\left|\tfrac{7}{3}\lambda_{\pi_0^{-1}(1)}^{n_k}-\tfrac{7}{4}\lambda_{\pi_0^{-1}(2)}^{n_k}\right|
	\Big\}<\frac{\lambda_{\alpha}^{n_k}}{{40}{d}^2}.
	\end{split}
	\end{equation*}
	\end{claimb}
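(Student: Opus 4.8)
We only indicate the changes with respect to the proof of Claim~\ref{claim:pos_1/2}; morally the argument is identical, the only novelty being that the length profile encoded by \eqref{eq:def_A_even} is now piecewise constant rather than uniform. The plan is as follows. First I would invoke Lemma~\ref{lem: positions_of_centers} to record that $\lambda^{n_k}/|\lambda^{n_k}|\in A$, so that every coordinate of $\lambda^{n_k}$ sits within an additive error $\tfrac{|\lambda^{n_k}|}{100 d^3}$ of its designated target value: $\tfrac{|\lambda^{n_k}|}{d-1}$ for $\beta\in\A\setminus\{\pi_0^{-1}(1),\pi_0^{-1}(2)\}$, and $\tfrac{3}{7}\cdot\tfrac{|\lambda^{n_k}|}{d-1}$ and $\tfrac{4}{7}\cdot\tfrac{|\lambda^{n_k}|}{d-1}$ for $\beta=\pi_0^{-1}(1)$ and $\beta=\pi_0^{-1}(2)$, respectively.

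The one algebraic input I would isolate is the identity $\tfrac{7}{3}\cdot\tfrac{3}{7}=\tfrac{7}{4}\cdot\tfrac{4}{7}=1$: because of it, each of the four rescaled quantities entering the maximum --- the lengths $\lambda_\beta^{n_k}$ with $\beta\notin\{\pi_0^{-1}(1),\pi_0^{-1}(2)\}$, together with $\tfrac{7}{3}\lambda_{\pi_0^{-1}(1)}^{n_k}$ and $\tfrac{7}{4}\lambda_{\pi_0^{-1}(2)}^{n_k}$ --- lies in the single interval $\bigl(\tfrac{|\lambda^{n_k}|}{d-1}-\tfrac{|\lambda^{n_k}|}{100 d^3},\ \tfrac{|\lambda^{n_k}|}{d-1}+\tfrac{|\lambda^{n_k}|}{100 d^3}\bigr)$. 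Hence every difference occurring in the maximum is bounded by the width of this interval, namely $\tfrac{|\lambda^{n_k}|}{50 d^3}$, and it remains only to verify $\tfrac{|\lambda^{n_k}|}{50 d^3}<\tfrac{\lambda_\alpha^{n_k}}{40 d^2}$, i.e.\ $\lambda_\alpha^{n_k}/|\lambda^{n_k}|>\tfrac{4}{5d}$; this is the exact analogue of the closing line of the proof of Claim~\ref{claim:pos_1/2}. When $\alpha\notin\{\pi_0^{-1}(1),\pi_0^{-1}(2)\}$, the membership $\lambda^{n_k}/|\lambda^{n_k}|\in A$ gives $\lambda_\alpha^{n_k}/|\lambda^{n_k}|>\tfrac{1}{d-1}-\tfrac{1}{100 d^3}>\tfrac{1}{d}>\tfrac{4}{5d}$, which completes the estimate.

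The step I expect to demand the most attention is the case in which the symbol $\alpha$ --- the one with $\tfrac{1}{2}\in T^{\ell_k}(I_\alpha^{n_k})$ --- is itself one of the short distinguished intervals $\pi_0^{-1}(1)$ or $\pi_0^{-1}(2)$, where $\lambda_\alpha^{n_k}$ carries the extra factor $\tfrac{3}{7}$ or $\tfrac{4}{7}$ and the comparison above is no longer slack; here the stated estimate must be read against the genuine (smaller) scale $\lambda_\alpha^{n_k}$ that will appear in the subsequent claims. What makes this harmless is that, for $d\ge 4$, the midpoint $|I^{n_k}|/2$ cannot lie in $I_{\pi_0^{-1}(1)}^{n_k}$ or $I_{\pi_0^{-1}(2)}^{n_k}$, so this sub-case forces $p_{n_k}(\tfrac{1}{2})=c_\alpha^{n_k}$; thus $\tfrac{1}{2}$ is exactly the center of its floor, which by itself --- and with room to spare --- provides the localisation that the claim is invoked to obtain in the remaining cases. (When $d=2$ the symmetric IET is a rotation and the theorem is already classical, so one may assume $d\ge 4$ throughout Case~B.) Apart from this case distinction, the argument is the verbatim constant-chasing of the proof of Claim~\ref{claim:pos_1/2}.
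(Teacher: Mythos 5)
Your core estimate is exactly the paper's: the paper proves Claim~\ref{cl:newclaim1} simply by asserting that it follows the same lines as Claim~\ref{claim:pos_1/2}, and your computation --- all of $\lambda_\beta^{n_k}$ for $\beta\notin\{\pi_0^{-1}(1),\pi_0^{-1}(2)\}$, together with $\tfrac{7}{3}\lambda_{\pi_0^{-1}(1)}^{n_k}$ and $\tfrac{7}{4}\lambda_{\pi_0^{-1}(2)}^{n_k}$, lie in a single window of width $|\lambda^{n_k}|/(50d^3)$, which is then compared with $\lambda_\alpha^{n_k}/(40d^2)$ via the lower bound $\lambda_\alpha^{n_k}/|\lambda^{n_k}|>\tfrac{4}{5d}$ --- is that argument carried out correctly, and with the normalization by $|\lambda^{n_k}|$ handled more carefully than in the printed proof of Claim~\ref{claim:pos_1/2}. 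For $\alpha\notin\{\pi_0^{-1}(1),\pi_0^{-1}(2)\}$ your proof is complete.

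The point to flag is your sub-case $\alpha\in\{\pi_0^{-1}(1),\pi_0^{-1}(2)\}$: there you do not prove the stated inequality, you argue that it is dispensable, because Lemma~\ref{lem: positions_of_centers} then forces $p_{n_k}(\tfrac{1}{2})=c_\alpha^{n_k}$ (the midpoint $|I^{n_k}|/2$ cannot lie in the two short intervals when $d\ge 4$). That observation is correct, and it is essentially what the paper itself relies on for the special letters (it is part (iii) of Claim~\ref{cl:B_iterates}); but it is not a proof of Claim~\ref{cl:newclaim1} as stated. In fact your own threshold computation shows that for these $\alpha$ the inequality with the constant $\tfrac{1}{40d^2}$ cannot be deduced from $\lambda^{n_k}/|\lambda^{n_k}|\in A$ alone: the differences may come arbitrarily close to $|\lambda^{n_k}|/(50d^3)$, while $\lambda_\alpha^{n_k}/|\lambda^{n_k}|$ is only of order $\tfrac{3}{7(d-1)}$ or $\tfrac{4}{7(d-1)}$, both below the required $\tfrac{4}{5d}$ once $d\ge 4$; so the claim as printed is slightly too strong for those two letters, a point the paper's one-line proof glosses over. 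To align your argument with the statement you should either prove it with a corrected constant --- for instance $\lambda_\alpha^{n_k}/(20d^2)$ holds for every $\alpha$, since $\lambda_\alpha^{n_k}/|\lambda^{n_k}|>\tfrac{3}{7}\bigl(\tfrac{1}{d-1}-\tfrac{1}{100d^3}\bigr)>\tfrac{2}{5d}$, and this is more than enough for the later claims, which in the special case work at the scale $\lambda_\alpha^{n_k}/50$ --- or state explicitly that the claim is only invoked for $\alpha\notin\{\pi_0^{-1}(1),\pi_0^{-1}(2),\pi_0^{-1}(d)\}$ and that for the special letters the exact centering $T^{-\ell_k}(\tfrac{1}{2})=c_\alpha^{n_k}$ replaces it. As written, ``the claim is not needed here'' does not establish the claim.
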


If $\alpha\notin \{\pi_0^{-1}(1),\pi_0^{-1}(2),\pi_0^{-1}(d) \}$, the rest of 
the proof of Theorem \ref{thm: ergodicity1} in Case B follows as in Case A by 
using Claim \ref{cl:newclaim1} instead of Claim \ref{claim:pos_1/2}. The only 
difference lies in the choice of $\overline\alpha \in \A$, that is, we pick 
$\pi_0(\overline\alpha)=d+2-\pi_0(\alpha)$. {From now on we focus on the case 
when $\alpha \in \{\pi_0^{-1}(1),\pi_0^{-1}(2),\pi_0^{-1}(d) \}$.}
} 

	
	{For $\alpha \in \{\pi_0^{-1}(1),\pi_0^{-1}(2),\pi_0^{-1}(d) \}$, Claim 
	\ref{cl: 1/2position} remains unchanged. 
	}
	 
Take 
\[
\mathfrak{h}_k:=h_{\pi_0^{-1}(1)}^{n_k}+h_{\pi_0^{-1}(2)}^{n_k}
+2h_{\pi_0^{-1}(d)}^{n_k}.
\]
It turns out, as evidenced by the point (ii) in Claim \ref{cl:B_iterates} below, that this choice is universal for all $\alpha\in\{\pi_0^{-1}(1),\pi_0^{-1}(2),\pi_0^{-1}(d) \}$.

{Claim \ref{cl:rigidity} is not true in Case B if $\alpha\in 
\{\pi_0^{-1}(1),\pi_0^{-1}(2),\pi_0^{-1}(d) \}$. The main reason is that the 
intervals of continuity of $T_{n_k}^{\pm i}$, where $i=-4,\ldots,4$, around the 
intervals' centers are much smaller than in Case A. The following is an 
alternative to Claim \ref{cl:rigidity}.}

	\stepcounter{claimcounter}\stepcounter{claimcounter}\begin{claimb}\label{cl:B_iterates}
	For any $\alpha\in \{\pi_0^{-1}(1),\pi_0^{-1}(2),\pi_0^{-1}(d) \}$ and $k \in \N$, the following holds.
	\begin{enumerate}[(i)]
		\item \label{cont_interval2} $ \left(c^{n_k}_{\alpha} - 
		\frac{\lambda_\alpha^{n_k}}{50}, c^{n_k}_{\alpha} + 
		\frac{\lambda_\alpha^{n_k}}{50}\right)$ is a continuity interval for 
		${T_{n_k}^{\pm 4}=}T^{\pm \h_k}.$
		\item\label{rigidity_base2} $T_{n_k}^{i} \left(c^{n_k}_{\alpha} - \frac{\lambda_\alpha^{n_k}}{50}, c^{n_k}_{\alpha} + \frac{\lambda_\alpha^{n_k}}{50}\right) \subseteq I_{\pi_0^{-1}(b(i,\alpha))}^{n_k}$, where $i=-4,\dots, 4$, and 
		\begin{itemize}
			\item $[b(i,\alpha)]_{i=-4}^4=[1,d,2,d,1,d,2,d,1]$, if $\alpha=\pi_0^{-1}(1)$,
			\item $[b(i,\alpha)]_{i=-4}^4=[2,d,1,d,2,d,1,d,2]$, if $\alpha=\pi_0^{-1}(2)$,
			\item $[b(i,\alpha)]_{i=-4}^4=[d,1,d,2,d,2,d,1,d]$, if $\alpha=\pi_0^{-1}(d)$.
		\end{itemize}
		\item\label{far_1/22} $c_\alpha^{n_k} - \tfrac{\lambda_\alpha^{n_k}}{50} < 
		T^{-\ell_k}(\tfrac{1}{2})=c_{\alpha}^{n_k}< 
		c_\alpha^{n_k} + \tfrac{\lambda_\alpha^{n_k}}{50}.$
	\end{enumerate}
	\end{claimb}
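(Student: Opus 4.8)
Here is my plan for proving Claim~\ref{cl:B_iterates}.

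The plan is to argue exactly as in the proof of Claim~\ref{cl:rigidity}, replacing the single auxiliary estimate \eqref{eq:distance_center_alpha} by a careful bookkeeping of the first four forward and backward iterates of $T_{n_k}$ applied to the center $c^{n_k}_\alpha$, for each of the three distinguished symbols $\alpha \in \{\pi_0^{-1}(1), \pi_0^{-1}(2), \pi_0^{-1}(d)\}$. The starting point is the same symmetry identity used before: since $\pi^{n_k} = \pi$ is symmetric, $T_{n_k}(x) = |I^{n_k}| - x$ whenever $x$ and $|I^{n_k}| - x$ lie in corresponding symmetric intervals, and more generally each application of $T_{n_k}$ to a point in a continuity interval is a translation by an explicit partial sum of the $\lambda_\beta^{n_k}$. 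First I would compute, for the \emph{unperturbed} lengths (i.e.\ $\lambda_{\pi_0^{-1}(1)} = \tfrac{3}{7(d-1)}$, $\lambda_{\pi_0^{-1}(2)} = \tfrac{4}{7(d-1)}$, $\lambda_\beta = \tfrac{1}{d-1}$ otherwise), the exact itinerary of $c_\alpha$ under $T_0^{\pm i}$, $i = 0, \dots, 4$; this is a finite explicit check and yields precisely the itinerary table $[b(i,\alpha)]_{i=-4}^{4}$ in the statement, together with the fact that $T_0^{\pm 4}(c_\alpha) = c_\alpha$ (so $c_\alpha$ is $8$-periodic in this degenerate model and $\tfrac12$ is fixed). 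The point $\tfrac12$ in the unperturbed model coincides with $c_\alpha$ only in the sense of being the fixed point; after Rauzy--Veech renormalization one invokes Lemma~\ref{lem: positions_of_centers} and Claim~\ref{cl: 1/2position} to get (\ref{far_1/22}), i.e.\ $T^{-\ell_k}(\tfrac12) = c_\alpha^{n_k}$ in the remaining case where $p_{n_k}(\tfrac12)$ is a center.

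Next I would pass from the unperturbed model to the actual lengths $\lambda^{n_k} \in A$ using Claim~\ref{cl:newclaim1}, which says all the relevant discrepancies between the $\lambda_\beta^{n_k}$ (suitably rescaled for the two short intervals) are bounded by $\lambda_\alpha^{n_k}/(40 d^2)$. Each of the (at most $4$) translations composing $T_{n_k}^{\pm i}$ is a partial sum of at most $d$ lengths, so the total deviation of $T_{n_k}^{\pm i}(x)$ from its unperturbed value, for $x$ in the interval $(c_\alpha^{n_k} - \tfrac{\lambda_\alpha^{n_k}}{50}, c_\alpha^{n_k} + \tfrac{\lambda_\alpha^{n_k}}{50})$, is at most something like $4 \cdot d \cdot \tfrac{\lambda_\alpha^{n_k}}{40 d^2} + \tfrac{\lambda_\alpha^{n_k}}{50} < \tfrac{\lambda_\alpha^{n_k}}{5}$, say; since in the unperturbed model each iterate $T_0^i(c_\alpha)$ sits at the center of the interval $I_{\pi_0^{-1}(b(i,\alpha))}$ and that interval has length at least $\tfrac{3}{7}\cdot\tfrac{1}{d-1}$ (comparable to $\lambda_\alpha^{n_k}$ up to the Claim~\ref{cl:newclaim1} bounds), this deviation is far smaller than half the length of the target interval. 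This simultaneously gives (\ref{rigidity_base2}) and, since the target interval at step $i = \pm 4$ is $I_\alpha^{n_k}$ and each intermediate interval is a single continuity interval of $T_{n_k}$, it gives (\ref{cont_interval2}). The only genuine subtlety, which I would flag as the main obstacle, is the case $\alpha = \pi_0^{-1}(d)$: here the forward itinerary $[d,1,d,2,d]$ and backward itinerary $[d,1,d,2,d]$ visit the \emph{same} multiset of intervals $\{1,2\}$ (each once, plus $d$) but the order of visiting $1$ and $2$ differs between the two sides — as the paper already notes in the motivational discussion — so one cannot directly equate $T_0^4(c_\alpha)$ with $T_0^{-4}(c_\alpha)$ term by term; instead one checks that the \emph{net} translation $\sum$ of signed lengths over four steps is the same on both sides because the total contribution of each symbol is the same, which forces $T_0^{\pm 4}(c_{\pi_0^{-1}(d)}) = c_{\pi_0^{-1}(d)}$, and one must verify this cancellation survives the perturbation within the $\tfrac{\lambda_\alpha^{n_k}}{50}$ tolerance.

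Finally, I would observe that the universality of the choice $\mathfrak{h}_k = h^{n_k}_{\pi_0^{-1}(1)} + h^{n_k}_{\pi_0^{-1}(2)} + 2 h^{n_k}_{\pi_0^{-1}(d)}$ across all three values of $\alpha$ is exactly point (\ref{rigidity_base2}): in each itinerary table, the symbols $\pi_0^{-1}(1)$ and $\pi_0^{-1}(2)$ each appear exactly once among $\{T_{n_k}^i(c_\alpha)\}_{i=0}^{3}$ while $\pi_0^{-1}(d)$ appears exactly twice, so $T^{\mathfrak{h}_k} = T_{n_k}^4$ on the relevant interval regardless of which of the three $\alpha$'s we are in — and likewise $T^{-\mathfrak{h}_k} = T_{n_k}^{-4}$, reading off the steps $i = -4, \dots, -1$. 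With Claim~\ref{cl:B_iterates} in hand, the analogues of Claim~\ref{cl:controled_BS} and the construction of the partial rigidity sets $(\Xi_k)$ proceed verbatim as in Case~A (now with $\mathfrak{h}_k = T_{n_k}^4$-times rather than $T_{n_k}^2$-times, and the base $B_k$ chosen as a subinterval of the appropriate side of $c_\alpha^{n_k}$ of length comparable to $\lambda_\alpha^{n_k}$), so that $S_{\mathfrak{h}_k} f(\tfrac12) = 0$ by Corollary~\ref{cor: cancellations} and $S_{\mathfrak{h}_k} f|_{\Xi_k} = -1$, whence $-1 \in Ess(T_f)$ and $T_f$ is ergodic by Theorem~\ref{thm: erg_criterion}.
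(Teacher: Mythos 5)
Your overall strategy --- performing the finite check in the unperturbed model $T_0$ (lengths $\tfrac{3}{7(d-1)},\tfrac{4}{7(d-1)},\tfrac{1}{d-1},\dots,\tfrac{1}{d-1}$) and then transferring to $\lambda^{n_k}\in A$ via Claim \ref{cl:newclaim1} --- is exactly the intended route (the paper states Claim \ref{cl:B_iterates} without proof, relying on this model computation), and your observations about the universality of $\h_k$ and about item (\ref{far_1/22}) are fine. The problem is that the finite check itself is misreported, and your error budget inherits the mistake. Working in units $u=\tfrac{1}{14(d-1)}$ (so the intervals have lengths $6u,8u,14u,\dots,14u$), one finds $T_0^{4}(c_{\pi_0^{-1}(1)})=c_{\pi_0^{-1}(1)}+2u$ and $T_0^{-4}(c_{\pi_0^{-1}(1)})=c_{\pi_0^{-1}(1)}-2u$, and similarly for the other two symbols: the fourth iterates are at distance $\tfrac{1}{7(d-1)}=2u$ from $c_\alpha$, as the paper itself points out in the discussion of $A$. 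So $c_\alpha$ is \emph{not} $8$-periodic, and your argument that the equality of the multisets of visited intervals ``forces $T_0^{\pm4}(c_{\pi_0^{-1}(d)})=c_{\pi_0^{-1}(d)}$'' is false: equal multisets give equal \emph{net} translations on both sides, but that net translation is $+2u$, not $0$. Likewise the intermediate iterates $T_0^{i}(c_\alpha)$ do not sit at the centers of the intervals $I_{\pi_0^{-1}(b(i,\alpha))}$; their offsets from those centers reach $6u$ (for instance $T_0^{-2}(c_{\pi_0^{-1}(d)})$ lies $6u$ to the right of the center of the $d$-th interval, hence only $u$ from its right endpoint, and $T_0^{\pm4}(c_{\pi_0^{-1}(1)})$ is only $u$ from the boundary of $I_{\pi_0^{-1}(1)}$).

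This is where your quantitative step breaks: you allow a total deviation of order $\tfrac{\lambda_\alpha^{n_k}}{5}$ and compare it with ``half the length of the target interval'', but the room actually available at the tight steps is the distance from the \emph{model iterate} (not the center) to the boundary, which is as small as $u\approx\tfrac{\lambda_\alpha^{n_k}}{14}$ when $\alpha=\pi_0^{-1}(d)$ and $\tfrac{\lambda_\alpha^{n_k}}{6}$ when $\alpha=\pi_0^{-1}(1)$; since $\tfrac{\lambda_\alpha^{n_k}}{5}$ exceeds both, your verification of (\ref{cont_interval2})--(\ref{rigidity_base2}) fails precisely where the claim is delicate. The repair is mechanical but must be done: tabulate, for each $\alpha\in\{\pi_0^{-1}(1),\pi_0^{-1}(2),\pi_0^{-1}(d)\}$ and each $i=-4,\dots,4$, the exact offset of $T_0^{i}(c_\alpha)$ from the center of $I_{\pi_0^{-1}(b(i,\alpha))}$ and its distance to the boundary (the minimum over all entries is $u$), and then check that the initial radius $\tfrac{\lambda_\alpha^{n_k}}{50}$ plus the perturbation accumulated over at most four translations (each a sum of at most $d$ lengths, controlled through Claim \ref{cl:newclaim1} by quantities of order $\tfrac{\lambda_\alpha^{n_k}}{40d^{2}}$) stays strictly below this margin $u$ --- which it does with these constants, but not with the slack $\tfrac{\lambda_\alpha^{n_k}}{5}$ you propose. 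With that corrected bookkeeping, the itinerary tables you list (which are correct) and the rest of your outline, including item (\ref{far_1/22}) via Lemma \ref{lem: positions_of_centers} and the choice of $A$, do yield the claim.
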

		
{Claim \ref{cl:controled_BS} changes slightly due to the difference in the 
lengths of the first two intervals in the definition of the set $A$. 

	\stepcounter{claimcounter}\begin{claimb}
\label{cl:controled_BS_2}
Let $k \in \N$ and $\alpha\in \{\pi_0^{-1}(1),\pi_0^{-1}(2),\pi_0^{-1}(d) \}$. For any $x \in T^{-i} \left( T^{\ell_k} \left(c^{n_k}_{\alpha} - \frac{\lambda_\alpha^{n_k}}{50}, c^{n_k}_{\alpha} + \frac{\lambda_\alpha^{n_k}}{50}\right)\right)$ and any $0 \leq i < q_k$,
\begin{equation*}
S_{\h_k} f(x) - S_{\h_k} f(\tfrac{1}{2} ) = f(T^{i}(x)) = \left\{ \begin{array}{lcr} 1 & \text{ if } & T^{i}(x) < \tfrac{1}{2}, \\ 0 & \text{ if } & T^{i}(x) = \tfrac{1}{2}, \\ -1 & \text{ if } & T^{i}(x) > \tfrac{1}{2}. \\ \end{array}\right.
\end{equation*}
\end{claimb}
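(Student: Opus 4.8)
The plan is to follow the scheme of the proof of Claim \ref{cl:controled_BS}, adding one short estimate --- dictated by the unbalanced definition of $A$ in \eqref{eq:def_A_even} --- to cover the case $\alpha=\pi_0^{-1}(d)$, in which the floor of the Rauzy-Veech towers containing $\tfrac{1}{2}$ is traversed twice, rather than once, by the relevant orbit segments. Fix $x$ and $i$ as in the statement and set $y:=T^{i}(x)$ and $z:=T^{-\ell_k}(y)$. By the hypothesis on $x$ one has $z\in I^{n_k}_{\alpha}$ with $|z-c^{n_k}_{\alpha}|<\lambda^{n_k}_{\alpha}/50$, and by part (iii) of Claim \ref{cl:B_iterates}, $\tfrac{1}{2}=T^{\ell_k}(c^{n_k}_{\alpha})$, so that $y$ and $\tfrac{1}{2}$ lie in the same tower floor $T^{\ell_k}(I^{n_k}_{\alpha})$. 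Using parts (i) and (ii) of Claim \ref{cl:B_iterates} together with the identity $\h_k=h^{n_k}_{\pi_0^{-1}(1)}+h^{n_k}_{\pi_0^{-1}(2)}+2h^{n_k}_{\pi_0^{-1}(d)}$ --- which, by inspecting the forward itineraries $[b(j,\alpha)]_{j=0}^{4}$ in part (ii), is exactly the number of $T$-iterates needed to perform four iterates of $T_{n_k}$ starting near $c^{n_k}_{\alpha}$, for each of the three admissible $\alpha$ --- I would first show that, for $w\in\{x,\tfrac{1}{2}\}$, the orbit segment $w,Tw,\dots,T^{\h_k-1}w$ meets each floor of the Rauzy-Veech towers over $I^{n_k}_{\pi_0^{-1}(1)}$ and $I^{n_k}_{\pi_0^{-1}(2)}$ exactly once, and each floor of the tower over $I^{n_k}_{\pi_0^{-1}(d)}$ exactly twice; here the hypothesis $0\le i<q_k$, with $q_k$ as in \eqref{eq:min_heights}, is used exactly as in the proof of Claim \ref{cl:controled_BS}.

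Next I would carry out the floor-by-floor comparison of $S_{\h_k}f(x)$ and $S_{\h_k}f(\tfrac{1}{2})$. Since $\tfrac{1}{2}$ is the only discontinuity of $f$, the function $f$ is constant on every tower floor other than $T^{\ell_k}(I^{n_k}_{\alpha})$, and, as the two orbit segments meet each such floor the same number of times, all of those contributions cancel in $S_{\h_k}f(x)-S_{\h_k}f(\tfrac{1}{2})$. If $\alpha\in\{\pi_0^{-1}(1),\pi_0^{-1}(2)\}$, the floor $T^{\ell_k}(I^{n_k}_{\alpha})$ belongs to a tower visited only once; it is met by the orbit of $x$ at time $i$ (at the point $y$) and by the orbit of $\tfrac{1}{2}$ at time $0$, whence $S_{\h_k}f(x)-S_{\h_k}f(\tfrac{1}{2})=f(y)-f(\tfrac{1}{2})=f(T^{i}(x))$, exactly as in Claim \ref{cl:controled_BS}, and the asserted trichotomy follows from the explicit form of $f$.

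The case $\alpha=\pi_0^{-1}(d)$ is where I expect the only genuine difficulty. Here $T^{\ell_k}(I^{n_k}_{\alpha})$ is met twice by each orbit segment: by the orbit of $x$ at times $i$ (at $y$) and $i+h^{n_k}_{\pi_0^{-1}(d)}+h^{n_k}_{\pi_0^{-1}(2)}$ (at $T^{\ell_k}(T_{n_k}^{2}z)$), and by the orbit of $\tfrac{1}{2}$ at times $0$ (at $\tfrac{1}{2}$) and $h^{n_k}_{\pi_0^{-1}(d)}+h^{n_k}_{\pi_0^{-1}(2)}$ (at $T^{\ell_k}(T_{n_k}^{2}c^{n_k}_{\alpha})$). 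The first visits contribute $f(y)-f(\tfrac{1}{2})=f(T^{i}(x))$ to the difference, and it remains to see that the second visits cancel. By part (i) of Claim \ref{cl:B_iterates}, the interval $(c^{n_k}_{\alpha}-\lambda^{n_k}_{\alpha}/50,\, c^{n_k}_{\alpha}+\lambda^{n_k}_{\alpha}/50)$ lies inside a continuity interval of $T_{n_k}^{4}$, so $T_{n_k}^{2}$ restricts to a translation there and $|T_{n_k}^{2}z-T_{n_k}^{2}c^{n_k}_{\alpha}|=|z-c^{n_k}_{\alpha}|<\lambda^{n_k}_{\alpha}/50$; on the other hand, the constants $\tfrac{3}{7}$ and $\tfrac{4}{7}$ in \eqref{eq:def_A_even} are chosen so that $|T_{n_k}^{2}c^{n_k}_{\alpha}-c^{n_k}_{\alpha}|$ stays comparable to $\lambda^{n_k}_{\alpha}$ (it is close to $\tfrac{3}{7}\lambda^{n_k}_{\alpha}$). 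Hence $T_{n_k}^{2}z$ and $T_{n_k}^{2}c^{n_k}_{\alpha}$ lie on the same side of $c^{n_k}_{\alpha}$ inside $I^{n_k}_{\alpha}$, and applying the orientation-preserving translation $T^{\ell_k}|_{I^{n_k}_{\alpha}}$ shows that $T^{\ell_k}(T_{n_k}^{2}z)$ and $T^{\ell_k}(T_{n_k}^{2}c^{n_k}_{\alpha})$ lie on the same side of $\tfrac{1}{2}=T^{\ell_k}(c^{n_k}_{\alpha})$, so $f$ takes the same value at both and the second-visit contributions cancel. This leaves $S_{\h_k}f(x)-S_{\h_k}f(\tfrac{1}{2})=f(T^{i}(x))$, which equals $1$, $0$, or $-1$ according as $T^{i}(x)$ is less than, equal to, or greater than $\tfrac{1}{2}$, as claimed. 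The delicate point, and the reason for the unbalanced definition of $A$ in \eqref{eq:def_A_even}, is precisely this last estimate, which prevents the second pass through the floor of $\tfrac{1}{2}$ from landing near $\tfrac{1}{2}$ itself.
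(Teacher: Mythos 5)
Your overall scheme (floor-by-floor comparison of the two length-$\h_k$ orbit segments, plus a separate analysis of the double pass through the floor containing $\tfrac12$) is the same as the paper's sketched argument, and your treatment of that double pass is correct: the estimate that $T_{n_k}^{2}c^{n_k}_{\alpha}$ sits at distance about $\tfrac37\lambda^{n_k}_{\alpha}$ from $c^{n_k}_{\alpha}$, hence that the two second visits land on the same side of $\tfrac12$, is exactly where the unbalanced choice of $A$ enters. The gap is in your first step, the assertion that for \emph{every} $0\le i<q_k$ and for $w\in\{x,\tfrac12\}$ the segment $w,Tw,\dots,T^{\h_k-1}w$ meets each floor of the towers over $I^{n_k}_{\pi_0^{-1}(1)}$, $I^{n_k}_{\pi_0^{-1}(2)}$ once and each floor of the tower over $I^{n_k}_{\pi_0^{-1}(d)}$ twice. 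For $\alpha\in\{\pi_0^{-1}(1),\pi_0^{-1}(2)\}$ this is true, because the two-sided itinerary $[b(j,\alpha)]_{j=-4}^{4}$ is periodic with period $4$, so shifting the window backwards by $i<q_k$ (the mechanism of Claim \ref{cl:controled_BS}, where the block sequence $\alpha,\overline\alpha,\alpha,\dots$ has period $\h_k$) does not change the multiset of visited floors. It fails for $\alpha=\pi_0^{-1}(d)$ as soon as $i>\ell_k$: then $x$ lies in the tower over $I^{n_k}_{\pi_0^{-1}(2)}$ (since $b(-1,\alpha)=2$), and the window starting at $x$ consists of the top $i-\ell_k$ floors of that tower, all of the blocks $d,2,d$, and only the bottom $h^{n_k}_{\pi_0^{-1}(1)}-(i-\ell_k)$ floors of the tower over $I^{n_k}_{\pi_0^{-1}(1)}$ (since $b(3,\alpha)=1\neq b(-1,\alpha)$, the itinerary is not periodic). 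Compared with the window starting at $\tfrac12$, the top $i-\ell_k$ floors of tower $\pi_0^{-1}(2)$ are counted twice and the top $i-\ell_k$ floors of tower $\pi_0^{-1}(1)$ are not visited at all.

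Consequently your cancellation leaves an extra term: writing $h_j=h^{n_k}_{\pi_0^{-1}(j)}$ and $I_j=I^{n_k}_{\pi_0^{-1}(j)}$, one gets $S_{\h_k}f(x)-S_{\h_k}f(\tfrac12)=f(T^{i}x)+\sum_{m=1}^{i-\ell_k}\bigl(f|_{T^{h_2-m}(I_2)}-f|_{T^{h_1-m}(I_1)}\bigr)$, so the claim requires that for each $1\le m\le i-\ell_k$ the floors $T^{h_2-m}(I_2)$ and $T^{h_1-m}(I_1)$ lie on the same side of $\tfrac12$ (equivalently, that $f$ takes the same value on them). Nothing in Claims \ref{cl:newclaim1} and \ref{cl:B_iterates} gives this: these floors are the backward images of the two adjacent rightmost pieces $T_{n_k}(I_2)$ and $T_{n_k}(I_1)$ of $I^{n_k}$, which separate after one backward step into the tops of two different towers whose position relative to $\tfrac12$ is not controlled by anything you invoke; your (correct) same-side argument only concerns the second visit to the floor of $\tfrac12$, not these floors. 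The case $i>\ell_k$ cannot be discarded, since there is no lower bound on $\ell_k$ while $q_k\to\infty$ and the application to $\Xi_k$ uses all $0\le i<q_k$; nor can you simply appeal to "exactly as in Claim \ref{cl:controled_BS}'', because what makes that proof work is precisely the periodicity that $\alpha=\pi_0^{-1}(d)$ lacks. So you must either prove this same-side property for the paired top floors of towers $\pi_0^{-1}(1)$ and $\pi_0^{-1}(2)$, or rework the comparison for $\alpha=\pi_0^{-1}(d)$, $i>\ell_k$ so that the two windows are aligned with the $d$-tower blocks before being compared; as written, this case is a genuine gap.
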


The proof of this claim remains essentially 
the same as that of Claim \ref{cl:controled_BS}. Indeed, it is enough to break 
the Birkhoff sums into pieces corresponding to each of the towers and compare 
the values of Birkhoff sums within said towers (see Figure \ref{fig:even} and 
the matching of orbits for the points $\tfrac{1}{2}$ and $T^{\pm\mathfrak 
h_k}(\tfrac{1}{2})$). {More precisely, for any $x\in\left(c^{n_k}_{\alpha} - 
\frac{\lambda_\alpha^{n_k}}{50}, c^{n_k}_{\alpha} + 
\frac{\lambda_\alpha^{n_k}}{50}\right)$ and any $i=0,\ldots,4$, it follows 
easily from the definition of $A$ that $T_{n_k}^i(x)\in 
I^{n_k}_{\pi_0^{-1}(b(i,\alpha)) }$, where
\begin{itemize}
	\item $[b(i,\alpha)]_{i=0}^4=[1,d,2,d,1]$ if $\alpha=\pi_0^{-1}(1)$,
	\item $[b(i,\alpha)]_{i=0}^4=[2,d,1,d,2]$ if $\alpha=\pi_0^{-1}(2)$,
	\item $[b(i,\alpha)]_{i=0}^4=[d,2,d,1,d]$ 
	if $\alpha=\pi_0^{-1}(d)$.
\end{itemize}}
It is important to notice that while the above claim is enough to construct the tower $\Xi_k$ and compare the Birkhoff sums of the points inside with $S_{\mathfrak h_k}f(\tfrac{1}{2})$, it is not enough to deduce the equation \eqref{eq: valueat1/2} in this case, since the point $T^{-\mathfrak h_k}(\tfrac{1}{2})$ does not belong to the interval $T^{\ell_k} \left(c^{n_k}_{\alpha} - \frac{\lambda_\alpha^{n_k}}{50}, c^{n_k}_{\alpha} + \frac{\lambda_\alpha^{n_k}}{50}\right)$. However, since $\tfrac{1}{2}\in T^{\ell_k} \left(c^{n_k}_{\alpha} - \frac{\lambda_\alpha^{n_k}}{50}, c^{n_k}_{\alpha} + \frac{\lambda_\alpha^{n_k}}{50}\right)$, by (ii) in Claim \ref{cl:B_iterates} we get for every $i=0,\ldots,4$ that $T_{n_k}^i\left(T^{-\ell_k}\left(T^{-\mathfrak h_k}(\tfrac{1}{2})\right)\right)\in I^{n_k}_{\pi_0^{-1}(b(i-4,\alpha))}$, where
\begin{itemize}
	\item $[b(i-4,\alpha)]_{i=0}^4=[1,d,2,d,1]$ if $\alpha=\pi_0^{-1}(1)$,
	\item $[b(i-4,\alpha)]_{i=0}^4=[2,d,1,d,2]$ if $\alpha=\pi_0^{-1}(2)$,
	\item $[b(i-4,\alpha)]_{i=0}^4=[d,1,d,2,d]$ 
	if $\alpha=\pi_0^{-1}(d)$.
\end{itemize}}
The first two of the above itineraries via $T_{n_k}$ are identical as that of all points $x\in\left(c^{n_k}_{\alpha} - \frac{\lambda_\alpha^{n_k}}{50}, c^{n_k}_{\alpha} + \frac{\lambda_\alpha^{n_k}}{50}\right)$, in particular $T^{-\ell_k}(\tfrac{1}{2})$. Thus, the Birkhoff sums in these cases can be controlled 
directly as in the proof of \eqref{eq: valueat1/2} in Case A. The last 
itinerary is different, but what is relevant is that the orbits of length 
$\mathfrak{h}_k$ of points $\frac{1}{2}$ and $T^{-\mathfrak h_k}(\tfrac{1}{2})$ 
travel through the same towers, just in a different order. That is enough to 
compare the associated Birkhoff sums, again because $f$ is 
piecewise constant and $\frac{1}{2}$ is its only discontinuity.

{Finally, using the facts listed above and following the same arguments as in 
Case A, one can show that
	\[ \Xi_k := \bigcup_{i=0}^{q_k-1}T^{-i}(B_k),\qquad B_k:= 
	T^{\ell_k} \left(c^{n_k}_{\alpha} + \frac{\lambda_\alpha^{n_k}}{100}, c^{n_k}_{\alpha} + \frac{\lambda_\alpha^{n_k}}{50}\right), \]
	where $q_k$ is given by \eqref{eq:min_heights}, is a partial rigidity 
	sequence for $T$ satisfying $S_{\h_k}f\mid_{\Xi_k} = {-1}.$ Proposition 
	\ref{prop:intessentialvalue} and Theorem \ref{thm: erg_criterion} then 
	guarantee that $T_f$ is ergodic.}
\vspace{3mm}


\emph{Acknowledgements:} Both authors would like to thank Corinna Ulcigrai for 
her constant support and numerous discussions on the subject. We would also 
like to thank Jamerson Bezerra, Davide Ravotti{, and anonymous reviewers} for 
helping to improve the quality of the paper. The first author would also like 
to thank the organizers and the speakers of the summer school ``Geometry and 
Dynamics of Moduli Spaces'', which took place in Bologna in 2022, since our 
result partially solves the problem posed there during the open problems 
session. Finally, both authors acknowledge the support of the {\it Swiss 
National Science Foundation} through Grant $200021\_188617/1$. The first author 
also thanks the {\it National Science Centre (Poland)} Grant OPUS 
2022/45/B/ST1/00179. The second author was partially supported by the UZH 
Postdoc Grant, grant no. FK-23-133, during the preparation of this work.
\bibliographystyle{acm}
\bibliography{Bibliography.bib}

\begin{thebibliography}{10}

\bibitem{aaronson_introduction_1997}
{\sc Aaronson, J.}
\newblock {\em An introduction to infinite ergodic theory}, vol.~50 of {\em
  Mathematical {Surveys} and {Monographs}}.
\newblock American Mathematical Society, Providence, RI, 1997.

\bibitem{aaronson_invariant_2002}
{\sc Aaronson, J., Nakada, H., Sarig, O., and Solomyak, R.}
\newblock Invariant measures and asymptotics for some skew products.
\newblock {\em Israel Journal of Mathematics 128}, 1 (Dec. 2002), 93--134.

\bibitem{atkinson_recurrence_1976}
{\sc Atkinson, G.}
\newblock Recurrence of {Co}-{Cycles} and {Random} {Walks}.
\newblock {\em Journal of the London Mathematical Society s2-13}, 3 (1976),
  486--488.
\newblock \_eprint:
  https://onlinelibrary.wiley.com/doi/pdf/10.1112/jlms/s2-13.3.486.

\bibitem{berk_backward_2022}
{\sc Berk, P.}
\newblock Backward {Rauzy}-{Veech} algorithm and horizontal saddle connections.
\newblock {\em Annali Scuola Normale Superiore - Classe di Scienze\/} (Dec.
  2022), 16--16.

\bibitem{berk_ergodicity_2023}
{\sc Berk, P., and Trujillo, F.}
\newblock Ergodicity of infinite extensions of symmetric {IETs} by the centered
  indicator of (0,1/2), May 2023.
\newblock arXiv:2304.01868 [math].

\bibitem{berk_ergodicity_2023-1}
{\sc Berk, P., Trujillo, F., and Ulcigrai, C.}
\newblock Ergodicity of explicit logarithmic cocycles over {IETs}, Aug. 2023.
\newblock arXiv:2210.16343 [math].

\bibitem{chaika_ergodicity_2019}
{\sc Chaika, J., and Robertson, D.}
\newblock Ergodicity of skew products over linearly recurrent {IETs}.
\newblock {\em Journal of the London Mathematical Society 100}, 1 (2019),
  223--248.
\newblock \_eprint: https://onlinelibrary.wiley.com/doi/pdf/10.1112/jlms.12210.

\bibitem{conze_cocycles_2011}
{\sc Conze, J.-P., and Fr{\k a}czek, K.}
\newblock Cocycles over interval exchange transformations and multivalued
  {Hamiltonian} flows.
\newblock {\em Advances in Mathematics 226}, 5 (Mar. 2011), 4373--4428.

\bibitem{conze_ergodicite_1976}
{\sc Conze, J.~P., and Keane, M.}
\newblock Ergodicit{\'e} d'un flot cylindrique.
\newblock {\em Publications math{\'e}matiques et informatique de Rennes}, 2
  (1976), 1--7.

\bibitem{conze_multiple_2014}
{\sc Conze, J.-P., and Pi{\k e}kniewska, A.}
\newblock On multiple ergodicity of affine cocycles over irrational rotations.
\newblock {\em Israel Journal of Mathematics 201}, 2 (Apr. 2014), 543--584.

\bibitem{fraczek_ergodicity_2018}
{\sc Fr{\k a}czek, K., and Schmoll, M.}
\newblock On {Ergodicity} of {Foliations} on
  \$\$\{{\textbackslash}mathbb\{{Z}\}{\textasciicircum}d\}\$\$-{Covers} of
  {Half}-{Translation} {Surfaces} and {Some} {Applications} to {Periodic}
  {Systems} of {Eaton} {Lenses}.
\newblock {\em Communications in Mathematical Physics 362}, 2 (Sept. 2018),
  609--657.

\bibitem{fraczek_non-ergodic_2014}
{\sc Fr{\k a}czek, K., and Ulcigrai, C.}
\newblock Non-ergodic \${\textbackslash}mathbb\{{Z}\}\$-periodic billiards and
  infinite translation surfaces.
\newblock {\em Inventiones mathematicae 197}, 2 (Aug. 2014), 241--298.

\bibitem{hellekalek_ergodicity_1986}
{\sc Hellekalek, P., and Larcher, G.}
\newblock On the ergodicity of a class of skew products.
\newblock {\em Israel Journal of Mathematics 54}, 3 (Oct. 1986), 301--306.

\bibitem{marmi_cohomological_2005}
{\sc Marmi, S., Moussa, P., and Yoccoz, J.-C.}
\newblock The cohomological equation for {Roth}-type interval exchange maps.
\newblock {\em Journal of the American Mathematical Society 18}, 4 (2005),
  823--872.

\bibitem{masur_interval_1982}
{\sc Masur, H.}
\newblock Interval {Exchange} {Transformations} and {Measured} {Foliations}.
\newblock {\em Annals of Mathematics 115}, 1 (1982), 169--200.
\newblock Publisher: Annals of Mathematics.

\bibitem{oren_ergodicity_1983}
{\sc Oren, I.}
\newblock Ergodicity of cylinder flows arising from irregularities of
  distribution.
\newblock {\em Israel Journal of Mathematics 44}, 2 (June 1983), 127--138.

\bibitem{pask_skew_1990}
{\sc Pask, D.~A.}
\newblock Skew products over the irrational rotation.
\newblock {\em Israel Journal of Mathematics 69}, 1 (Feb. 1990), 65--74.

\bibitem{ralston_ergodicity_2013}
{\sc Ralston, D., and Troubetzkoy, S.}
\newblock Ergodicity of certain cocycles over certain interval exchanges.
\newblock {\em Discrete and Continuous Dynamical Systems 33}, 6 (2013),
  2523--2529.
\newblock Publisher: Discrete and Continuous Dynamical Systems.

\bibitem{ralston_residual_2017}
{\sc Ralston, D., and Troubetzkoy, S.}
\newblock Residual generic ergodicity of periodic group extensions over
  translation surfaces.
\newblock {\em Geometriae Dedicata 187}, 1 (Apr. 2017), 219--239.

\bibitem{schmidt_cocycles_1977}
{\sc Schmidt, K.}
\newblock {\em Cocycles on ergodic transformation groups}.
\newblock Macmillan {Lectures} in {Mathematics}, {Vol}. 1. Macmillan Co. of
  India, Ltd., Delhi, 1977.

\bibitem{schmidt_cylinder_1978}
{\sc Schmidt, K.}
\newblock A cylinder flow arising from irregularity of distribution.
\newblock {\em Compositio Mathematica 36}, 3 (1978), 225--232.

\bibitem{veech_gauss_1982}
{\sc Veech, W.~A.}
\newblock Gauss {Measures} for {Transformations} on the {Space} of {Interval}
  {Exchange} {Maps}.
\newblock {\em Annals of Mathematics 115}, 2 (1982), 201--242.
\newblock Publisher: Annals of Mathematics.

\bibitem{viana_ergodic_2006}
{\sc Viana, M.}
\newblock Ergodic {Theory} of {Interval} {Exchange} {Maps}.
\newblock {\em Revista Matem{\'a}tica Complutense 19}, 1 (Apr. 2006), 7--100.
\newblock Number: 1.

\bibitem{yoccoz_interval_2010}
{\sc Yoccoz, J.-C.}
\newblock Interval exchange maps and translation surfaces.
\newblock In {\em Homogeneous flows, moduli spaces and arithmetic}, vol.~10 of
  {\em Clay {Math}. {Proc}.} Amer. Math. Soc., Providence, RI, 2010, pp.~1--69.

\end{thebibliography}
\end{document}